\newtheorem{thm}{Theorem}[section]
\newtheorem{prop}[thm]{Proposition}
\newtheorem{lemma}[thm]{Lemma}
\newtheorem{cor}[thm]{Corollary}
\newtheorem{defn}[thm]{Definition}
\newtheorem{remark}[thm]{Remark}
\newtheorem{notation}[thm]{Notation}
\newcommand{\typeb}{ \circ\!\!\;\!\!-\!\!\!-\!\circ\!\!\!\Rightarrow\!\!=\!\!\!\circ}
\newcommand{\cordin}{\scriptstyle{\quad\alpha_1\!\!\!\quad\alpha_2\!\!\!\quad\alpha_3}}
\numberwithin{equation}{section}
\begin{document}{\allowdisplaybreaks[4]

\title[Gromov-Witten invariants for  $G/B$ and Pontryagin product for $\Omega K$]{Gromov-Witten invariants for  $G/B$ and \\Pontryagin product for $\Omega K$}

\author{Naichung Conan Leung}
\address{The Institute of Mathematical Sciences and Department of Mathematics,
           The Chinese University of Hong Kong, Shatin, Hong Kong}
\email{leung@math.cuhk.edu.hk}
\thanks{  }

\author{Changzheng Li}
\address{The Institute of Mathematical Sciences and Department of Mathematics,
           The Chinese University of Hong Kong, Shatin, Hong Kong}

\curraddr{School of Mathematics, Korea Institute for Advanced Study, 87,  Hoegiro, Dongdaemun-gu, Seoul, 130-722, Korea}
\email{czli@kias.re.kr}
\date{
      }




\begin{abstract}
  We give an explicit formula for  ($T$-equivariant) 3-pointed genus zero Gromov-Witten invariants for $G/B$.
   We derive it by finding an  explicit formula for the  Pontryagin product on the equivariant homology of the based loop group $\Omega K$.

\end{abstract}

\maketitle


\section{Introduction }

 A flag variety $G/B$ is the quotient of a simply-connected  simple complex Lie group by its  Borel subgroup and
    it plays very important roles in many different branches of mathematics. There are natural   Schubert cycles inside $G/B$.
     The corresponding Schubert cocycles $\sigma^u$'s
   form a basis of the cohomology ring $H^*(G/B)$. In terms of this basis, the structure coefficients $N_{u, v}^w$ of the intersection
  product,            $$\sigma^u\cdot\sigma^v=\sum_wN_{u, v}^w\sigma^w,$$
         are called   \textit{Schubert structure constants}, which is a direct generalization of the Littlewood-Richardson coefficients for complex Grassmannians.
 When $G=SL(n+1, \mathbb{C})$, the  coefficients $N_{u, v}^{w}$ count suitable Young tableaus (see e.g. \cite{fu22}) or honeycombs \cite{kntao}, \cite{kntao22}. An explicit  formula for $N_{u,v}^w$ in all
    cases are given by Kostant and Kumar \cite{koku}  by considering Kac-Moody groups and an effective algorithm
              is obtained by Duan \cite{duan} via topological methods. Note that a ring presentation of $H^*(G/B, \mathbb{C})$ is given much earlier by Borel \cite{borel} in
              terms of Chern classes of universal bundles over $G/B=K/T$, where $K$ is a maximal compact Lie subgroup of $G$ and $T=K\cap B$ is
                a maximal torus of $K$.

  The (small) quantum cohomology ring of $G/B$, or more generally of any symplectic manifold,     is  introduced by the physicist Vafa \cite{vafa} and
   it is a deformation of the ring structure on  $H^*(G/B)$ by incorporating genus zero Gromov-Witten invariants of $G/B$ into the intersection product.
      As complex vector spaces, the quantum cohomology ring    $QH^*(G/B)$ is isomorphic
      to  $H^*(G/B)\otimes \mathbb{C}[\mathbf{q}]$ with  $\mathbf{q}_\lambda=q_1^{a_1}\cdots q_n^{a_n}$
        for $\lambda=(a_1, \cdots, a_n)\in H_2(G/B, \mathbb{Z})$. The structure coefficients $N_{u, v}^{w, \lambda}$ of the quantum product,
  $$\sigma^u\star \sigma^v=\sum_{w, \lambda}N_{u, v}^{w, \lambda}\mathbf{q}_\lambda\sigma^w,$$
  are called \textit{quantum Schubert structure constants}. As we will see in section \ref{appendequiquantum}, $N_{u, v}^{w, \lambda}=I_{0, 3, \lambda}(\sigma^u, \sigma^v, \sigma^{\omega_0w})$
   is the 3-pointed genus zero Gromov-Witten invariant for $\sigma^u, \sigma^v, \sigma^{\omega_0w}\in H^*(G/B)$, by the definition of the quantum product $\sigma^u\star \sigma^v$.
     We will use the terminology ``quantum Schubert structure constants" instead of ``Gromov-Witten invariants" for $G/B$ throughout this paper, in analog with the classical
    Schubert structure constants.

  Because of the lack of functoriality, the study of the quantum cohomology ring of $G/B$, or more generally partial flag varieties $G/P$, is a   challenging problem.
     A presentation of the ring structure on  $QH^*(G/B)$ is given by Kim \cite{kim} in terms of   Toda lattice
             for the Langlands dual Lie group. There have been a lot of studies of $QH^*(G/P)$ in special cases including
   complex Grassmannians,  partial flag varieties
             of type $A$, isotropic Grassmannians and  two exceptional minuscule homogeneous varieties
               (see e.g. \cite{bu0}, \cite{bu}, \cite{kt1}, \cite{kt2}  and  \cite{cmn} respectively  and the excellent survey \cite{fu11}).
             Nevertheless, the quantum  Schubert structure constants had only been computed explicitly for very few cases, such as
               complex Grassmannians and complete flag varieties of type $A$.

         In this article, we give
            an explicit formula for the (equivariant) quantum Schubert structure constants of the quantum cohomology ring $QH^*(G/B)$ (for partial flag varieties $G/P$,
            see \cite{czli}). We should note that an algorithm to determine the equivariant quantum Schubert structure constants\footnote{
             Explicitly, the equivariant (quantum) Schubert structure constants are homogeneous polynomials.}  was obtained earlier by Mihalcea \cite{mih}
            and he used it to find a characterization of the
             torus-equivariant quantum cohomology $QH^*_T(G/P)$.
            To describe the formula, we consider the  affine Weyl group $W_{\scriptsize\mbox{af}} = W\ltimes Q^\vee$, which is the semidirect product of the Weyl group $W$
               and the coroot lattice $Q^\vee$.
            For any      $x, y\in W_{\scriptsize\mbox{af}}$,  we define $c_{x, [y]}$ and $d_{x, [y]}$ combinatorially,
            which  are  rational functions in simple roots $\alpha_i$. In particular for
             $x=ut_A, y=vt_A$ and $z=wt_{2A+\lambda}$ with  $A=-12n(n+1)\sum_{i=1}^nw_i^\vee$ a sum of fundamental coweights $w_i^\vee$'s, the
                       rational function  $ \sum_{\lambda_1, \lambda_2\in  Q^\vee}
                                c_{x, [ t_{\lambda_1}]}c_{y, [ t_{\lambda_2}]}d_{z, [t_{\lambda_1+\lambda_2}]} 
                                 $  will be shown to be a constant,   provided that  $\langle \lambda, 2\rho\rangle = \ell(u)+\ell(v)-\ell(w)$ where
                                 $\rho$ is the summation
                                  of fundamental weights $w_i$'s. Furthermore, this number coincides with $N_{u, v}^{w, \lambda}$ as stated in
             our main theorem.
 \bigskip

  \noindent \textbf{Main Theorem }  {\itshape
  Let $u, v, w\in W$, $\lambda\in Q^\vee, \lambda\succcurlyeq 0. $ Let $A=-12n(n+1)\sum_{i=1}^nw_i^\vee$. The quantum Schubert structure constant   $N_{u, v}^{w, \lambda}$ for
               $G/B$ is given   by
                       $$  N_{u, v}^{w, \lambda}=\sum_{\lambda_1, \lambda_2\in  Q^\vee}
                                c_{ut_A, [ t_{\lambda_1}]}c_{vt_A, [ t_{\lambda_2}]}d_{wt_{2A+\lambda}, [t_{\lambda_1+\lambda_2}]}, 
                                 $$
          provided that  $\langle \lambda, 2\rho\rangle = \ell(u)+\ell(v)-\ell(w)$ and zero otherwise.
 }
\bigskip

\noindent The above summation does make sense, since there are in fact  only finitely many nonzero terms involved.
 Indeed, the summation over the infinite set $Q^\vee$ can be simplified to the finite set
       $\Gamma\times W$ with $\Gamma=\{(\lambda_1, \lambda_2)~|~ \lambda_1, \lambda_2\succcurlyeq A, \lambda_1+\lambda_2\preccurlyeq 2A+\lambda,
                  \lambda_1 \mbox{ and } \lambda_2 \mbox{ are anti-dominant elements in }  Q^\vee\}$
 and we obtain  $$ N_{u, v}^{w, \lambda}=
                     \!\sum\limits_{(\lambda_1, \lambda_2, v_1)\in \Gamma\times W}
                      \!\!\!\!\!\!\!\!\! c_{ut_A, [ v_1t_{\lambda_1}]}c_{vt_A, [ v_1t_{\lambda_2}]}d_{wt_{2A+\lambda}, [v_1t_{\lambda_1+\lambda_2}]}  
                             .$$

Quantum Schubert structure constants  for $G/P$ can be identified
with certain quantum Schubert structure constants
  for $G/B$ via Peterson-Woodward comparison formula \cite{wo}, the corresponding formula and its applications  are   discussed in     \cite{czli}.

When $v$ is a simple reflection, the equivariant quantum product
$\sigma ^{u}\star\sigma^{v}$ can be given explicitly by the
equivariant quantum Chevalley formula. This formula was originally
stated by Peterson in his unpublished lecture notes \cite{peterson} and has
been proved recently by Mihalcea \cite{mih}. In \cite{mih}, Mihalcea
also showed that the multiplication  in $QH_{T}^{\ast }(G/B)$ is
determined by the equivariant quantum Chevalley formula together
with a few other natural properties (see e.g. Proposition
\ref{crit}). As a consequence, a recursive  algorithm to determine
$N_{u,v}^{w,\lambda}$ was given in \cite{mih}. However, an explicit
formula is still lacking.

In \cite{peterson} Peterson already stated that
$QH_{T}^{\ast}(G/B)$ is ring isomorphic to
$H_{\ast}^{T}(\Omega K)$ after localization, which is called Peterson's Theorem. Here $\Omega K$ is the
based loop group of the maximal compact subgroup $K$ of $G$ and the
\textit{Pontryagin product} defines a ring structure on its
(Borel-Moore) homology group $H_{\ast}^{T}(\Omega K)$. In Peterson's
notes \cite{peterson}, the powerful tool of nil-Hecke ring of
Kostant-Kumar  \cite{koku} was used heavily. Lam and his co-authors
had done many important works along this direction, such as
\cite{lam}, \cite{lamschshi}, \cite{lamshi} and \cite{lamlmsh}. The proofs of Peterson's Theorem in \cite{peterson} are incomplete, and in
\cite{lamshi}, Lam and Shimozono  proved this result, with the help of Peterson's $j$-isomorphism.

The homology $H_{\ast}^{T}(\Omega K)$ is an associative algebra over
$S=H_{T}^{\ast}\left( \mbox{pt} \right)  $ and it has an additive
$S$-basis given by Schubert homology classes
$\{\mathfrak{S}_{x}~|~x\in W_{{\scriptsize \mbox{af}}}^{-}\} $,
where $W_{{\scriptsize \mbox{af}}}^{-}$ is the set of
minimal length representatives of cosets in $W_{{\scriptsize
\mbox{af}}}/W$. We obtain the following explicit formula for the Pontryagin
product of Schubert classes in $H_{\ast }^{T}(\Omega K)$, based on well-known localization formulas due to Arabia \cite{ara} for affine flag manifolds.
\bigskip

\noindent\textbf{Theorem  \!\ref{str}} {\itshape For any Schubert
classes $\mathfrak{S}_{x}$ and $\mathfrak{S}_{y}$ in
$H_{\ast}^{T}(\Omega K),$ the structure coefficients for their
Pontryagin product}
\[
\mathfrak{S}_{x}\mathfrak{S}_{y}=\sum_{z\in W_{{\scriptsize \mbox{af}}}^{-}%
}b_{x,y}^{\,z}\mathfrak{S}_{z}%
\]
{\itshape are given by%
\[
b_{x,y}^{\,z}=\sum_{\lambda,\mu\in Q^{\vee}}c_{x,[t_{\lambda}%
]}c_{y,[t_{\mu}]}d_{z,[t_{\lambda+\mu}]}.
\]
}

In the present paper, we give an alternative proof
 of Peterson's Theorem, in the sense that we find elementary proofs of the following two formulas
of Peterson-Lam-Shimozono \cite{lamshi} on the Pontryagin product of
certain Schubert classes,  by analyzing the combinatorial nature of the summation in the
formula of $b_{x, y}^{\, z}$.

(i) {\itshape For any
}$wt_{\lambda},t_{\mu}\in W_{{\scriptsize
\mbox{af}}}^{-}$,\thinspace
{\itshape one has $\mathfrak{S}_{wt_{\lambda}}\mathfrak{S}_{t_{\mu}%
}=\mathfrak{S}_{wt_{\lambda+\mu}}$};

(ii) {\itshape For any}
$\sigma_{i}t_{\lambda },ut_{\mu}\in W_{{\scriptsize \mbox{af}}}^{-}$
{\itshape with $\sigma _{i}=\sigma_{\alpha_{i}}$, $i\in I$, one has
\[
\mathfrak{S}_{\sigma_{i}t_{\lambda}}\mathfrak{S}_{ut_{\mu}}=(u(w_{i}%
)-w_{i})\mathfrak{S}_{ut_{\lambda+\mu}}\,+\,\sum_{\gamma\in\Gamma_{1}}%
\langle\gamma^{\vee},w_{i}\rangle\mathfrak{S}_{u\sigma_{\gamma}t_{\lambda+\mu
}}\,+\,\sum_{\gamma\in\Gamma_{2}}\langle\gamma^{\vee},w_{i}\rangle
\mathfrak{S}_{u\sigma_{\gamma}t_{\lambda+\mu+\gamma^{\vee}}},
\]
where }  $\Gamma_1=\{\gamma\in R^+ ~|~
\ell(u\sigma_\gamma)=\ell(u)+1\}$ and
            $\Gamma_2=\{\gamma\in R^+ ~|~  \ell(u\sigma_\gamma)=\ell(u)+1-\langle\gamma^\vee, 2\rho\rangle\}$.

Indeed,  Lam and Shimozono noticed that combining the above formulas
   with the criterion of Mihalcea gives a proof of Peterson's theorem.
This in turn shows that any structure  constant of  $QH^*_T(G/B)$  coincides with
certain structure constants  of    $H_*^T(\Omega K)$, which yields our Main Theorem. In particular, there is a choice of
  certain  $b_{x, y}^{\, z}$'s which coincide with the same   $N_{u, v}^{w, \lambda}$. For instance, we can choose one such $A$ as in
  Main theorem  to make  a certain choice $(x, y, z)=(ut_A, vt_A, wt_{2A+\lambda})$.
    In many cases,
           we can replace it by  a  \textit{smaller} one  (see section \ref{secproofofmain} for more details on the choices).
  As a consequence, there are  only a few nonzero terms in the summation for $N_{u, v}^{w, \lambda}$ in many cases.  For instance for $G=SL(3, \mathbb{C})$ with
   $u=v=s_1s_2s_1,     w=s_1s_2$ and $\lambda=\theta^\vee$, where $\theta$ is the highest root (see section \ref{ex11} for more details on the notations),
   it suffices to take $A=-\theta^\vee$ and
         the   summation for $N_{u, v}^{w, \lambda}$
        in fact contains one term only, namely $N_{u, v}^{w, \lambda}=c_{s_0, [s_0]}^2d_{s_2s_0, [s_1s_2s_1t_{-2\theta^\vee}]},$
  where $c_{s_0, [s_0]}=(-1)^1{1\over s_0(\alpha_0)}\big|_{\alpha_0=-\theta}=-{1\over \theta}$ and
      $d_{s_2s_0, [s_1s_2s_1t_{-2\theta^\vee}]}=d_{s_2s_0, [s_0s_1s_2s_1s_0]}=s_0s_1(\alpha_2)s_0s_1s_2s_1(\alpha_0)\big|_{\alpha_0=-\theta}$ $=\theta^2$ by definition. Hence,
      $N_{u, v}^{w, \lambda}=(-{1\over \theta})^2 \cdot \theta^2=1$.
      This coefficient can  also be determined by Mihalcea's algorithm but our formula is more effective. To show the computational power of our formula, we will compute some nontrivial
      coefficients for the higher rank group $Spin(7, \mathbb{C})$.

There could be an alternative way to determine our structure
coefficients by finding polynomial representatives for Schubert
classes. For instance, this approach has been used by  Fomin,
Gelfand and Postnikov for  complete flag varieties of type $A$
\cite{fominGP}. The work of Magyar \cite{mag}  could be relevant for
general cases. See also \cite{fomin}.

This paper is organized as follows. In section 2, we set up the
notations that will be used throughout this article and review some
well-known facts on the theory of Kac-Moody algebras and groups. In
section 3, we define the important quantities $c_{x,[y]}$,
$d_{x,\left[  y\right]  }$ and derive an explicit formula for the
Pontryagin product on $H_{\ast}^{T}(\Omega K)$. In section 4, we
analyze our formula and  prove our main theorem.  In section 5, we give
examples to demonstrate the effectiveness of our formula. The proofs of some propositions stated in
$\mbox{section 4}$ are given in the
the appendix.

\section{Notations}

\subsection{Notations}
We introduce the notations that are used throughout  the paper.

\begin{enumerate}
    \item[$G$:]   a simply-connected simple complex Lie group of rank $n$.
    \item[$B, H$:] $B$ is a Borel subgroup of $G$; $H$ is a   maximal torus of $G$ contained in $B$.
    \item[$K$:]    a maximal compact subgroup of $G$.
     \item[$T$:] $T=K\cap H$ is a maximal torus in $K$.
    \item[$\mathfrak{g},\mathfrak{h}$:]  $\mathfrak{g}=\mbox{Lie}(G)$; $\mathfrak{h}= \mbox{Lie}(H)$.
     \item[$I, I_{\scriptsize\mbox{af}}$:]  $I=\{1, \cdots, n\}$;    $I_{\scriptsize\mbox{af}}=\{0, 1, \cdots, n\}$.
    \item[$R, \Delta$:]  $R$ is the root system of $(\mathfrak{g}, \mathfrak{h})$;  $\Delta=\{\alpha_i~|~i\in I\}$ is a basis of simple roots.
     \item[$R^+$:] $R^+=R\cap \bigoplus_{i\in I}\mathbb{Z}_{\geq 0}\alpha_i$ is the set of the positive roots; $R=\big(-R^+)\bigsqcup R^+$.
     \item[$\alpha_i^\vee\!,\! Q^\vee$:] $\{\alpha_i^\vee~|~i\in I\}$ are the simple coroots; $Q^\vee=\bigoplus_{i\in I} \mathbb{Z}\alpha_i^\vee$ is the coroot lattice.
     \item[$\tilde Q^\vee$:]     $\tilde Q^\vee=\{\mu\in Q^\vee~|~  \langle\mu, \alpha_i\rangle\leq 0,\,\, i\in I\}$ is the set of anti-dominant elements.

     \item[$w_i, \rho$:] $\{w_i~|~i\in I\}$ are the fundamental weights;    $\rho={1\over 2}\sum_{\beta\in R^+}\beta \,\,\big(=\sum_{i\in I}w_i\big)$.
    \item[$w_i^\vee$:]     $\{w_i^\vee~|~i\in I\}$ are the fundamental coweights.
     \item[$W$:] $W=\langle\sigma_{\alpha_i}~:~ i\in I\rangle$ is the Weyl group  of ($\mathfrak{g}, \mathfrak{h})$.

     \item[$\theta,  \omega_0$:] $\theta$ is the highest (long) root of $R$; $\omega_0$ is the longest element in $W$.

       \item[$\mathfrak{g}_{\scriptsize \mbox{af}} $:]      the (untwisted) affine Kac-Moody algebra associated to $\mathfrak{g}$.
        \item[$ \mathfrak{h}_{\scriptsize \mbox{af}}$:]     Cartan subalgebra of $\mathfrak{g}_{\scriptsize \mbox{af}} $.
        \item[$\alpha_0,  \delta$:] $\alpha_0$ is the affine simple root; $\delta=\alpha_0+\theta$ is the null root.
         \item[$ R^+_{\scriptsize \mbox{re}}$:] $R^+_{\scriptsize \mbox{re}}=\{\alpha+m\delta~|~ \alpha\in R, m\in\mathbb{Z}^+\}\cup R^+$ is the set of positive real roots.

       \item[ $\mathcal{S}, Y$:]  $\mathcal{S}=\{\sigma_{\alpha_i}~|~i\in I_{\scriptsize \mbox{af}}\}$;  $Y\subset \Delta$ is a subset.

       \item[ $ W_{\scriptsize \mbox{af}}$:] the Weyl group of $\mathfrak{g}_{\scriptsize \mbox{af}}$;  $ W_{\scriptsize \mbox{af}}=\langle \sigma_{\alpha_i}: i\in I_{\scriptsize \mbox{af}}\rangle$.

       \item[$W_{\scriptsize \mbox{af}, Y} $:]  the subgroup of $W_{{\scriptsize \mbox{af}}}$ generated by $ \{\sigma_{\alpha}~|~ \alpha\in Y\}$.

       \item[$ W_{\scriptsize \mbox{af}}^{Y}$:]
                                          the subset $\{x\in W_{\scriptsize \mbox{af}}~|~ \ell(x)\leq \ell(y), \forall y\in xW_{\scriptsize \mbox{af}, Y}\}$ of $W_{\scriptsize \mbox{af}}$.

     \item[$\mathcal{G} $:]   the Kac-Moody group associated to the Kac-Moody algebra $\mathfrak{g}_{\scriptsize \mbox{af}}$.
           \item[$ \mathcal{B} $:]  the standard Borel subgroup of $\mathcal{G}$.

    \item[$ \mathcal{P}_Y$:]   the standard parabolic subgroup of $\mathcal{G}$ associated to
                                          $Y$;  $\mathcal{P}_Y\supset \mathcal{B}$.

      \item[$W_{\scriptsize \mbox{af}}^-, \mathcal{P}_0 $:]    $W_{\scriptsize \mbox{af}}^-=W_{\scriptsize \mbox{af}}^\Delta$;\quad $\mathcal{P}_0=\mathcal{P}_\Delta$.

    \item[$LK, \Omega K$:]    $LK =\{f:\mathbb{S}^1\to K~|~ f \mbox{ is smooth } \}$;  $\Omega K=\{f\in LK~|~ f(1_{\mathbb{S}^1})=1_K\}$.

      \item[$S, \hat S$:] $S=\mathbb{Q}[\alpha_1, \cdots,\alpha_n]$;\quad $\hat S=\mathbb{Q}[\alpha_0, \alpha_1, \cdots,\alpha_n]$.
      \item[$q_\lambda$:] $q_\lambda=q_1^{a_1}\cdots q_n^{a_n}$ for $\lambda=\sum_{i=1}^n a_i\alpha_i^\vee\in Q^\vee$.
          \item[$\sigma_i, \sigma_\beta$:] $\sigma_i=\sigma_{\alpha_i}$ is a simple reflection. $\sigma_\beta$ is a reflection for $\beta\in R^+_{\scriptsize\mbox{re}}\bigsqcup \big(-R^+_{\scriptsize\mbox{re}}\big)$.
      \item[$\sigma_u, \sigma^u$:] Schubert classes for $G/B$, where $u\in W$. $\sigma_u\in H_{2\ell(u)}(G/B, \mathbb{Z})$ and $\sigma^u\in H^{2\ell(u)}(G/B, \mathbb{Z})$
                                            are defined in section 6.3.
     \item[$\mathfrak{S}_x, \mathfrak{S}^x$:] Schubert classes for $\mathcal{G}/\mathcal{P}_Y$.
                                 $\mathfrak{S}_x\in H_{2\ell(x)}(\mathcal{G}/\mathcal{P}_Y, \mathbb{Z})$ and $\mathfrak{S}^x\in H^{2\ell(x)}(\mathcal{G}/\mathcal{P}_Y, \mathbb{Z})$
                                            are defined in section 6.4.

       \item[$c_{x, [y]} $:] defined in section 3.1 for any $x, y\in  W_{\scriptsize \mbox{af}}^-$.
       \item[$d_{x, [y]} $:] defined in section 3.1 for any $x, y\in  W_{\scriptsize \mbox{af}}^-$.
       \item[$c_{x, y}'$:] $c_{x, y}'=c_{x, y}\big|_{\alpha_0=-\theta}$ with $c_{x, y}$ defined in section 3.1.

       \item[$\Gamma_1$:] $\Gamma_1(u)=\{\gamma\in R^+ ~|~   \ell(u\sigma_\gamma)=\ell(u)+1\}$, or  simply  $\Gamma_1=\Gamma_1(u)$.

         \item[$ \Gamma_2$:]  $\Gamma_2(u)=\{\gamma\in R^+ ~|~  \ell(u\sigma_\gamma)=\ell(u)+1-\langle\gamma^\vee, 2\rho\rangle\}$, or simply  $\Gamma_2=\Gamma_2(u)$.

\end{enumerate}

\subsection{Some more explanations}

See \cite{kac} and \cite{kumar}
    for the meaning of   the notations as in section 2.1 as well as  the theory of Kac-Moody algebras and groups.

The   fundamental weights $\{w_i~|~i\in I\}$   are the dual basis to
the simple coroots $\{\alpha_i^\vee~|~i\in I\}$
     with respect to the natural pairing $\langle\cdot, \cdot\rangle
       :\mathfrak{h}\times\mathfrak{h}^*\rightarrow \mathbb{C}$. The simple reflections  $\{\sigma_i=\sigma_{\alpha_i}~|~i\in I\}$ act on $\mathfrak{h}$ by
        $\sigma_i(\lambda)=\lambda-\langle\lambda, \alpha_i\rangle\alpha_i^\vee$ for $\lambda\in\mathfrak{h}$.   Therefore the Weyl group $W$,
         which is generated by the simple reflections,  acts on $\mathfrak{h}$ and $\mathfrak{h}^*$ naturally. Note that $R=W\cdot \Delta$. For any $\gamma\in R$,
          $\gamma=w(\alpha_i)$ for some $w\in W$ and $i\in I$.  We can well define $\gamma^\vee=w(\alpha_i^\vee)$, which is independent of the expressions of
           $\gamma$.

     The Weyl group $W_{\scriptsize \mbox{af}}$ of $\mathfrak{g}_{\scriptsize \mbox{af}}$ is in fact an affine group,
      $W_{\scriptsize \mbox{af}}=W\ltimes Q^\vee$,   where we denote
  $t_\lambda$\footnote{ The notation $t_\lambda$ is used instead of $t_{\nu(\lambda)}$ as in chapter 6 of \cite{kac}.}
   the image
  of $\lambda\in Q^\vee$ in $W_{\scriptsize \mbox{af}}$ (by abusing notations). To be more precise, one has $\sigma_\beta=\sigma_\alpha t_{m\alpha^\vee}$ for
   $\beta=\alpha+m\delta\in R_{\scriptsize \mbox{re}}=\big(-R^{+}_{\scriptsize \mbox{re}}\big)\bigsqcup R^+_{\scriptsize \mbox{re}}$. In particular,
    $\sigma_{\alpha_0}=\sigma_\theta t_{-\theta^\vee}$.
    Given $w\in W, \lambda\in Q^\vee, \gamma\in \bigoplus_{i\in I}\mathbb{Z}\alpha_i$
   and $m\in\mathbb{Z}$, we have $t_{w\cdot \lambda}=wt_\lambda w^{-1}$ and  the following action
             $$wt_\lambda\cdot(\gamma+m\delta)=w\cdot \gamma+(m-\langle \lambda, \gamma\rangle)\delta.$$
     Since  $\big(W_{\scriptsize \mbox{af}}, \mathcal{S}\big)$ is a Coxeter system,  we can define the length function
     $\ell: W_{\scriptsize \mbox{af}}\rightarrow \mathbb{Z}_{\geq 0}$ and the Bruhat order $(W_{\scriptsize \mbox{af}}, \preccurlyeq)$ (see e.g. \cite{hump}). We use the following notation
         $$x=[\sigma_{\beta_1}\cdots\sigma_{\beta_r}]_{\scriptsize\mbox{red}} ,$$
      whenever $(\sigma_{\beta_1},\cdots, \sigma_{\beta_r})$ is a reduced decomposition of $x\in W_{\scriptsize \mbox{af}}$; that is,  $r=\ell(x)$, $x=\sigma_{\beta_1}\cdots\sigma_{\beta_r}$ and
       $\beta_i$'s are simple roots. (It is possible that $\beta_i=\beta_j$ for $i\neq j$.) This notation will  also be used throughout this article.

  Explicitly, the affine Kac-Moody group $\mathcal{G}$ is realized as a central extension by $\mathbb{C}^*$
      of the loop group consisting of the $\mathbb{C}((t))$-rational points $G(\mathbb{C}((t)))$ of $G$ extended by one dimensional complex torus.
   For each subset $Y\subset \Delta$, there is a standard parabolic subgroup $P_Y\subset \mathcal{G}$ corresponding to $Y$.
   In particular, $\mathcal{B}=\mathcal{P}_\emptyset$ and we denote  $\mathcal{P}_0=\mathcal{P}_\Delta$.
     For our purpose of studying
      the generalized flag varieties $\mathcal{G}/\mathcal{B}$ and $\mathcal{G}/\mathcal{P}_0$,
       the  group $\mathcal{G}$  can be taken simply to be $\mathcal{G}=G(\mathbb{C}((t)))$.
      That is, $\mathcal{G}=\mbox{Mor}(\mathbb{C}^*, G)$. As a consequence, $\mathcal{P}_0=G(\mathbb{C}[[t]])=\mbox{Mor}(\mathbb{C}, G)$ and $\mathcal{B}=\{f\in \mathcal{P}_0~|~ f(0)\in B\}$.

     In the present paper, we only consider the following two cases: $Y=\emptyset$  and $Y=\Delta$.
     Note that   $W_{\scriptsize \mbox{af}, \emptyset}=\{1\}$,
              $W_{\scriptsize \mbox{af}}^\emptyset= W_{\scriptsize \mbox{af}}$ and $W_{\scriptsize \mbox{af}, \Delta}=W$. We denote
               $W_{\scriptsize \mbox{af}}^-=W_{\scriptsize \mbox{af}}^\Delta$.

\section{Pontryagin product on equivariant homology of $\Omega K$}
  The $T$-equivariant (Borel-Moore) homology $H^T_*(\Omega K)$ of based loop group $\Omega K$ is a module
   over  $S=H^*_T(\mbox{pt})=\mathbb{Q}[\alpha_1, \cdots, \alpha_n]$ with an $S$-basis of Schubert classes
   $\{\mathfrak{S}_x~|~ x\in W_{\scriptsize\mbox{af}}^-\}$, where   $W_{\scriptsize\mbox{af}}^-$ is the set of
     minimal length representatives of cosets in $W_{\scriptsize\mbox{af}}/W$.  $T$ acts on $\Omega K$ by pointwise conjugation.
    The Pontryagin product $\Omega K\times \Omega K\rightarrow \Omega K$,
        given by $(f\cdot g)(t)=f(t)\cdot g(t)$,
          is associative and $T$-equivariant. Therefore, it induces a product map
            $H^T_*(\Omega K)\otimes H^T_*(\Omega K)\rightarrow H^T_*(\Omega K)$,  making $H^T_*(\Omega K)$   an associative $S$-algebra.
              The structure  constants $b_{x, y}^{\,z}\in S$ are defined by
                $$\mathfrak{S}_x\mathfrak{S}_y=\sum\nolimits_{z\in W_{\scriptsize\mbox{af}}^-}b_{x, y}^{\,z}\mathfrak{S}_z$$
           for  $x,y\in W_{\scriptsize\mbox{af}}^-.$
       The main result of this section is Theorem \ref{str},  giving an explicit formula for the Pontryagin product as
         $b_{x,y}^{\,z}=\sum_{\lambda, \mu\in Q^\vee}c_{x, [t_\lambda]}c_{y, [t_\mu]}d_{z, [t_{\lambda+\mu}]}$
       in which the summation is in fact only over finitely many non-zero terms   and  $c_{x, [y]}, d_{x, [y]}$ are defined combinatorially  as below.
       Due to   Peterson's Theorem which was proved by Lam and Shimozono,  these structure coefficients $b_{x, y}^{\, z}$
          correspond
          to quantum Schubert structure constants  for the equivariant quantum cohomology $QH^*_T(G/B)$.

\subsection{Definitions and properties of  $c_{x, [y]}$ and $d_{x, [y]}$}

 \begin{defn}\label{de11}For any {\upshape $x, y\in  W_{\scriptsize\mbox{af} }$}, we define the homogeneous rational  function  $c_{x, y}=c_{x, y}(\alpha_0, \cdots, \alpha_n)
   \in \mathbb{Q}[\alpha_0^{\pm1},\alpha_1^{\pm1},\cdots, \alpha_n^{\pm1}]$ as follows. Let  {\upshape $x=[\sigma_{\beta_1}\cdots\sigma_{\beta_m}]_{ \scriptsize\mbox{red}}$}.
 If $y\not\preccurlyeq x$,  then $c_{x, y}=0$; if $y\preccurlyeq x$, then
      $$c_{x, y}:=(-1)^m\sum \big(\sigma_{\beta_1}^{\varepsilon_1}(\beta_1)\sigma_{\beta_1}^{\varepsilon_1}\sigma_{\beta_2}^{\varepsilon_2}(\beta_2)\cdots
             \sigma_{\beta_1}^{\varepsilon_1}\cdots\sigma_{\beta_m}^{\varepsilon_m}(\beta_m)\big)^{-1},$$
      where the summation runs over all   $(\varepsilon_1, \cdots,  \varepsilon_m)\!\in\!\! \{0, 1\}^m$ satisfying
       $\sigma_{\beta_1}^{\varepsilon_1}\cdots\sigma_{\beta_m}^{\varepsilon_m}=y.$

     We  define $c_{x, [y]}\in\mathbb{Q}[ \alpha_1^{\pm1}, \cdots, \alpha_n^{\pm1}]$ as follows.
      $$c_{x, [y]}:=\sum\nolimits_{z\in y W}c_{x, z}\big|_{\alpha_0=-\theta}=\sum\nolimits_{z\in y W}c_{x, z}(-\theta, \alpha_1, \cdots, \alpha_n).$$

    Let
     $\gamma_k$ denote  the (positive real) root $\sigma_{\beta_1}\cdots \sigma_{\beta_{k-1}}(\beta_k)$.   We define the homogeneous polynomial
     $d_{y, x}= d_{y, x}(\alpha_0, \alpha_1, \cdots, \alpha_n)\in \mathbb{Q}[\alpha_0, \cdots, \alpha_n]$ as follows.\\
             If $y\not \preccurlyeq x$ then  $d_{y, x}= 0$; if $y=1$, then $d_{y, x}= 1$; if $y\preccurlyeq x$ and  $y\neq 1$, then
          $$d_{y, x}:=\sum
                     \gamma_{i_1}\cdots\gamma_{i_r},$$
    where the summation runs
     over all subsequences $(i_1, \cdots, i_r)$  of $(1, \cdots, m)$ such that
     {\upshape $ y=[\sigma_{\beta_{i_1}}\cdots\sigma_{\beta_{i_r}}]_{\scriptsize \mbox{red}}$}.

       We define $d_{y, [x]}\in \mathbb{Q}[\alpha_1,\cdots,\alpha_n]$ as follows.
             $$d_{y, [x]}:= d_{y, x}{\big|_{\alpha_0=-\theta}}=d_{y, x}(-\theta, \alpha_1, \cdots, \alpha_n).$$
  \end{defn}
 Note that for any {\upshape $y, y'\in W_{\scriptsize\mbox{af}}$} with $yW= y'W$, one has $c_{x, [y]}=c_{x, [y']}$.
  In addition, one  has  $d_{x, [y]}=d_{x, [y']}$ provided  $x\in W_{\scriptsize\mbox{af}}^-$  (following from Lemma \ref{coe99}).

 \begin{prop}[\cite{koku}; see also chapter 11 of \cite{kumar}]\label{basis} $c_{x, y}$ and $d_{y, x}$ are well-defined,
                              independent of the choices of   reduced decompositions of $x$.
    The transpose of $\big(c_{x, y}\big)$ is the inverse of the matrix $\big( d_{x, y}\big)$ in the following sense
                   {\upshape $$\sum\nolimits_{z\in W_{\scriptsize \mbox{af}}}c_{x, z}d_{y, z}=\delta_{x, y}=\sum\nolimits_{z\in W_{\scriptsize \mbox{af}}}c_{z, x}d_{z, y}, \quad \mbox{for any } x, y\in W_{\scriptsize \mbox{af}}.$$}
 \end{prop}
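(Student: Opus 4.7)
The plan is to identify $c_{x,y}$ and $d_{y,x}$ as structure coefficients of two mutually dual bases in Kostant-Kumar's nil-Hecke algebra, inside which both well-definedness and the inverse-matrix identities become automatic. Work in the twisted group algebra $Q_{\text{af}} := \widehat{Q} \rtimes W_{\text{af}}$, where $\widehat{Q}=\mathrm{Frac}(\hat S)$, with free left $\widehat{Q}$-basis $\{\delta_w\}_{w\in W_{\text{af}}}$ and multiplication $\delta_w \cdot f = w(f)\,\delta_w$. For each $i\in I_{\text{af}}$, set $\Lambda_i := \alpha_i^{-1}(\delta_{\sigma_i} - \delta_e)\in Q_{\text{af}}$.

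The main input, which I would cite from \cite{koku} and chapter 11 of \cite{kumar} rather than reprove, is that the $\Lambda_i$ satisfy the braid relations of $W_{\text{af}}$; this reduces to a finite calculation in each rank-two dihedral parabolic. Consequently $\Lambda_x := \Lambda_{\beta_1}\cdots\Lambda_{\beta_m}$ depends only on $x$, not on the chosen reduced word $x = [\sigma_{\beta_1}\cdots\sigma_{\beta_m}]_{\text{red}}$. Expanding each factor $\delta_{\sigma_{\beta_k}} - \delta_e$ as a signed sum over $\varepsilon_k\in\{0,1\}$ and pushing each scalar $\alpha_{\beta_k}^{-1}$ past the preceding $\delta$'s via the twisting relation, one reads off
\[
\Lambda_x \;=\; \sum_{y\preccurlyeq x} c_{x,y}\,\delta_y,
\]
with $c_{x,y}$ matching Definition \ref{de11}: the factor $\bigl(\sigma_{\beta_1}^{\varepsilon_1}\cdots\sigma_{\beta_k}^{\varepsilon_k}(\beta_k)\bigr)^{-1}$ records the image of $\alpha_{\beta_k}^{-1}$ after commuting through the partial product on its left, and the $(-1)^m$ prefactor collects the signs from the $-\delta_e$ choices. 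Since $\Lambda_x$ and $\{\delta_y\}$ are intrinsic, so is $c_{x,y}$.

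For $d_{y,x}$, introduce the $\widehat{Q}$-linear functionals $\{\xi^x\}$ on $Q_{\text{af}}$ dual to the $\Lambda_y$'s via $\langle\Lambda_y,\xi^x\rangle = \delta_{x,y}$. A short induction on $\ell(x)$ along a reduced word, using the Chevalley-Demazure-type recursion satisfied by the $\Lambda_i$ on the dual side, identifies the evaluations $\xi^x(\delta_y)$ with the subword polynomials of Definition \ref{de11}, i.e.\ with $d_{y,x}$. Because $\xi^x$ is characterized intrinsically by the pairing, $d_{y,x}$ is independent of the reduced word for $x$.

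Finally, the two identities in the proposition say exactly that $(c_{x,z})$ and $(d_{y,z})^T$ are mutually inverse as change-of-basis matrices between $\{\Lambda_x\}$ and $\{\delta_w\}$ (equivalently between their dual bases in the functional module); both sums are supported on finite Bruhat intervals---either $y\preccurlyeq z\preccurlyeq x$ or $x\preccurlyeq z\preccurlyeq y$---so convergence is automatic, and the equality is immediate from $\langle\Lambda_y,\xi^x\rangle = \delta_{x,y}$. The main obstacle is the braid-relation verification for the $\Lambda_i$, which is standard and cited; everything else is sign- and scalar-tracking bookkeeping.
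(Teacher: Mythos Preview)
The paper does not give its own proof of this proposition; it simply cites Kostant--Kumar \cite{koku} and chapter~11 of \cite{kumar}. Your sketch is precisely the nil-Hecke ring argument from those references: realize the $c_{x,y}$ as the expansion coefficients of the Demazure--Lusztig type operators $\Lambda_x$ in the $\{\delta_w\}$ basis (well-definedness then follows from the braid relations for the $\Lambda_i$), realize the $d_{y,x}$ as the localizations $\xi^y|_x$ of the dual Schubert basis, and read off the inverse-matrix identities from the change-of-basis between $\{\Lambda_x\}$ and $\{\delta_w\}$. So your proposal is correct and is exactly the approach the paper is pointing to.

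One small remark on your bookkeeping: the global sign $(-1)^m$ in Definition~\ref{de11} does not come solely from the $-\delta_e$ choices, since a term with $\varepsilon$-pattern $(\varepsilon_1,\dots,\varepsilon_m)$ contributes sign $(-1)^{m-\sum\varepsilon_k}$ in your expansion. The match with the definition works because the $k$-th factor there is $\sigma_{\beta_1}^{\varepsilon_1}\cdots\sigma_{\beta_k}^{\varepsilon_k}(\beta_k)$, which includes $\sigma_{\beta_k}^{\varepsilon_k}(\beta_k)=(-1)^{\varepsilon_k}\beta_k$ and so absorbs the extra $(-1)^{\sum\varepsilon_k}$. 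This is harmless, but worth stating accurately if you write it up.
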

Note that both   summations in the above proposition contain only
finitely many nonzero terms.

  \subsection{Explicit formula for Pontryagin product on $H_*^T(\Omega K)$}
 Because of the homotopy-equivalence between
   $\mathcal{G}/\mathcal{P}_0$ and $\Omega K$, we interchange the notations  $\mathcal{G}/\mathcal{P}_0$ and $\Omega K$ freely.
  Let $\hat T_\mathbb{C}$ denote  the standard maximal torus of $\mathcal{G}$  with maximal compact
  sub-torus $\hat T$.
   The $\hat T$-equivariant cohomology $H^*_{\hat T}(\mathcal{G}/\mathcal{P}_0)$ is an $\hat S$-algebra  with a basis of Schubert classes
   $ \{\hat{\mathfrak{S}}^{x}~|~ x\in W^-_{\scriptsize\mbox{af}}\}$, where  $\hat S=H^*_{\hat T}(\mbox{pt})= \mathbb{Q}[\alpha_0, \alpha_1, \cdots, \alpha_n]$.
   Note that $T\subset \hat T$ is a sub-torus. The $T$-equivariant cohomology $H^*_{T}(\mathcal{G}/\mathcal{P}_0)$ is an $S$-algebra  with a basis of Schubert classes
   $ \{ {\mathfrak{S}}^{x}~|~ x\in W^-_{\scriptsize\mbox{af}}\}$, where  $S=H^*_{T}(\mbox{pt})= \mathbb{Q}[\alpha_1, \cdots, \alpha_n]$. Furthermore, one has
    the following     evaluation maps $\mbox{ev}: H^*_{\hat T}(\mathcal{G}/\mathcal{P}_0)\rightarrow H^*_T(\mathcal{G}/\mathcal{P}_0)$ and
           $ {ev}: \hat S \rightarrow  S$ such that  $\mbox{ev}(f\hat{\mathfrak{S}}^{x})= {ev}( f)\mathfrak{S}^{x}$, where
            $f=f(\alpha_0, \alpha_1,\cdots, \alpha_n)\in \hat S$ and  $ {ev}(f)=f(-\theta, \alpha_1, \cdots, \alpha_n)\in S$. (See appendix \ref{equi}
            for more details on the above descriptions.)

   The $T$-equivariant homology $H^T_*(\mathcal{G}/\mathcal{P}_0)$ is the submodule of $\mbox{Hom}_S(H^*_T(\mathcal{G}/\mathcal{P}_0), S)$ spanned by
      the equivariant Schubert homology classes  $\{\mathfrak{S}_{x}
    ~|~ x\in  W_{\scriptsize\mbox{af}}^- \}$\footnote{We should note that we are using the equivariant Borel-Moore homology (see  e.g.  \cite{grah}).},
       which for any $x, y\in W_{\scriptsize\mbox{af}}^-$ satisfy $\langle \mathfrak{S}_{x}, \mathfrak{S}^{y}\rangle= \delta_{x, y}$  with respect to the natural pairing.

    The adjoint action of $T$ on $K$ induces a canonical action on $\Omega K$ by pointwise conjugation; that is, $(t\cdot f)(s):= t\cdot f(s)\cdot t^{-1}$ for
    any $t\in T$ and $f\in \Omega K$.    The group multiplication $K\times K\to K$ induces a
     so-called Pontryagin  product $\Omega K\times \Omega K\rightarrow \Omega K$ by pointwise multiplication; that is, $(f\cdot g)(s)=f(s)\cdot g(s)$
         for any $f, g\in \Omega K$.
    The Pontryagin product
          is obviously associative and  $T$-equivariant. Therefore  it induces
            $H^T_*(\Omega K)\otimes H^T_*(\Omega K)\rightarrow H^T_*(\Omega K)$, which is also called the  Pontryagin product.
            As a consequence, $H^T_*(\Omega K)$ is an associative $S$-algebra (see \cite{peterson}, \cite{lam}), and therefore
            the   structure  coefficients $b_{x, y}^{\,z}$ for the Pontryagin product
                $$\mathfrak{S}_x\mathfrak{S}_y=\sum\nolimits_{z\in W_{\scriptsize\mbox{af}}^-}b_{x, y}^{\,z}\mathfrak{S}_z$$  for  $x,y\in W_{\scriptsize\mbox{af}}^-$
         are   polynomials in $S$. Now we  state the
        main result of this section as follows.
 \begin{thm}\label{str}  For any   {\upshape $x, y, z\in W_{\scriptsize\mbox{af}}^-$},  the structure coefficient  $b_{x, y}^{\,z}$  for $H^T_*(\Omega K)$ is
      given by 
             {\upshape $$b_{x,y}^{\,z}=\sum_{\lambda, \mu\in Q^\vee}c_{x, [t_\lambda]}c_{y, [t_\mu]}d_{z, [t_{\lambda+\mu}]}.$$}
  \end{thm}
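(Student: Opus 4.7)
The plan is to use equivariant localization at $T$-fixed points, combined with the crucial fact that Pontryagin multiplication $\mu\colon\Omega K\times \Omega K\to\Omega K$ on $T$-fixed points is just addition in the coroot lattice. Under the homotopy equivalence $\Omega K\simeq \mathcal{G}/\mathcal{P}_0$, the $T$-fixed set is indexed by $W_{\scriptsize\mbox{af}}/W$, with the coset $t_\lambda W$ corresponding to the group homomorphism $\mathbb{S}^1\to T$ determined by $\lambda\in Q^\vee$; on these points $\mu$ acts by $(t_\lambda W,\,t_\mu W)\mapsto t_{\lambda+\mu}W$.

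First I would set up the GKM/localization picture on $\mathcal{G}/\mathcal{P}_0$. By Kostant--Kumar theory on $\mathcal{G}/\mathcal{B}$, followed by the $W$-averaging (equivalently the pullback along $\mathcal{G}/\mathcal{B}\to \mathcal{G}/\mathcal{P}_0$) and the specialization $\alpha_0=-\theta$ (the restriction $\hat T\to T$) that together produce the bracketed notation $c_{x,[y]},d_{x,[y]}$, the restriction of $\mathfrak{S}^z$ at the fixed point $t_\lambda W$ equals $d_{z,[t_\lambda]}$. Proposition \ref{basis} then forces the dual equivariant Schubert class $\mathfrak{S}_x\in H_{*}^{T}(\mathcal{G}/\mathcal{P}_0)$ to be represented, in the localized description, by the coefficients $c_{x,[t_\lambda]}$: the inverse-matrix identity $\sum_z c_{x,z}d_{y,z}=\delta_{x,y}$ is precisely the statement that the Schubert classes in cohomology and homology are dual bases under the GKM pairing.

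Next, the Pontryagin product on $H_{*}^{T}(\Omega K)$ is by definition the transpose of the coproduct $\mu^*\colon H_{T}^{*}(\Omega K)\to H_{T}^{*}(\Omega K)\otimes_S H_{T}^{*}(\Omega K)$. Since $\mu$ is $T$-equivariant and carries $(t_\lambda W,t_\mu W)$ to $t_{\lambda+\mu}W$, localizing $\mu^*\mathfrak{S}^z$ at the fixed point $(t_\lambda W,t_\mu W)$ yields exactly $d_{z,[t_{\lambda+\mu}]}$. Combining this with the expression for $\mathfrak{S}_x\otimes \mathfrak{S}_y$ in the localized picture,
\[
b_{x,y}^{\,z}=\langle \mathfrak{S}_x\mathfrak{S}_y,\mathfrak{S}^z\rangle=\langle \mathfrak{S}_x\otimes \mathfrak{S}_y,\,\mu^*\mathfrak{S}^z\rangle=\sum_{\lambda,\mu\in Q^\vee}c_{x,[t_\lambda]}c_{y,[t_\mu]}\,d_{z,[t_{\lambda+\mu}]},
\]
which is the claim.

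The main obstacle will be justifying rigorously that the combinatorial quantities $c_{x,[y]},d_{x,[y]}$ of Definition \ref{de11} really are the GKM localization data on $\mathcal{G}/\mathcal{P}_0$, and not merely on $\mathcal{G}/\mathcal{B}$. Concretely, one must check that the sum over the coset $yW$ in the definition of $c_{x,[y]}$ corresponds to the pullback along $\mathcal{G}/\mathcal{B}\to\mathcal{G}/\mathcal{P}_0$, and that setting $\alpha_0=-\theta$ corresponds to the $\hat T\to T$ restriction, so that the evaluation map $\mathrm{ev}$ of \S 3.2 intertwines the $\hat T$-localization and the $T$-localization. Once this is in place, finiteness of the double sum is automatic from the Bruhat-order vanishing built into $c$ and $d$: $c_{x,[t_\lambda]}$ (resp.\ $d_{z,[t_{\lambda+\mu}]}$) vanishes unless some representative of $t_\lambda W$ (resp.\ $t_{\lambda+\mu}W$) lies $\preccurlyeq x$ (resp.\ $\preccurlyeq z$) in $W_{\scriptsize\mbox{af}}$, leaving only finitely many contributing pairs $(\lambda,\mu)$.
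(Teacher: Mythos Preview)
Your proposal is correct and is essentially the same argument as the paper's. The paper packages your ``localization at $t_\lambda W$'' as an actual element $\psi_{t_\lambda}=(\iota_{t_\lambda}^\Delta)^*\in H_*^T(\Omega K)$, proves your two localization identities as Proposition~\ref{basis3} (namely $\mathfrak{S}_x=\sum_\lambda c_{x,[t_\lambda]}\psi_{t_\lambda}$ and $\psi_{t_\lambda}=\sum_z d_{z,[t_\lambda]}\mathfrak{S}_z$), proves your ``$\mu$ adds cocharacters on fixed points'' as Proposition~\ref{mul} ($\psi_{t_\lambda}\psi_{t_\mu}=\psi_{t_{\lambda+\mu}}$), and then multiplies directly in homology rather than dualizing through $\mu^*$; the obstacle you flag---passing from $\mathcal{G}/\mathcal{B}$ to $\mathcal{G}/\mathcal{P}_0$ and from $\hat T$ to $T$---is exactly what the proof of Proposition~\ref{basis3} handles via $\pi_*$ and the evaluation map $\mathrm{ev}$.
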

   In particular, $b_{x,y}^{\,z}=b_{y, x}^{\, z}$ for all $z$, which implies $\mathfrak{S}_x\mathfrak{S}_y=\mathfrak{S}_y\mathfrak{S}_x$.
  Furthermore, $c_{x, [t_\lambda]}\neq 0$ only if there exists  $z\in t_\lambda W$ such that $c_{x, z}\neq 0$, which holds only if $z \preccurlyeq x$.
  In particular, there are only finitely many nonzero terms of $c_{x, [t_\lambda]}$'s  and $c_{y, [t_\mu]}$'s once $x$ and $y$ are fixed. Hence, the above summation
   for $b_{x, y}^{\, z}$ does make sense.

  Note that $\mathcal{P}_0=\mathcal{P}_\Delta$ and $ W_{\scriptsize\mbox{af}}^-=W_{\scriptsize\mbox{af}}^\Delta$. By replacing $\Delta$ with a general subset $Y\subset \Delta$,
     $H_*^T(\mathcal{G}/\mathcal{P}_Y)$ and $\mathfrak{S}^x_Y$ can be defined in  a similar manner.
     To distinguish these with the case of our main interest $\mathcal{G}/\mathcal{P}_0$,
     we denote $\mathfrak{S}^x_0, \mathfrak{S}_x^0$ for the case $Y=\emptyset$ (note that $\mathcal{P}_\emptyset=\mathcal{B}$).
      These notions can be extended to $\hat T$-equivariant (co)homology for $\mathcal{G}/\mathcal{P}_Y$ for a larger $\hat T$-action. The corresponding
      Schubert classes are denoted by  ``$\hat {\mathfrak{S}}$" instead of       ``$\mathfrak{S}$".

    \begin{defn} Given
                 {\upshape $x\in W_{\scriptsize\mbox{af}}$}, we define the
                   element $\psi_x^Y$ in {\upshape$\mbox{Hom}_S(H^*_T(\mathcal{G}/\mathcal{P}_Y), S)$} to be
                   the canonical morphism  {\upshape$\psi_x^Y:=(\iota_x^Y)^*: H^*_T(\mathcal{G}/\mathcal{P}_Y)\rightarrow H^*_T(\mbox{pt})= S$,}
                     where  $\iota_x^Y$ is the $T$-equivariant map  {\upshape $\iota_x^Y:\mbox{pt}\rightarrow \mathcal{G}/\mathcal{P}_Y$} given by
                      {\upshape $\mbox{pt}\mapsto x\mathcal{P}_Y$}.

              We define $\hat{\psi}_x^Y$ to be the element $\hat{\psi}_x^Y=(\iota_x^Y)^*$ in {\upshape $\mbox{Hom}_{\hat S}(H^*_{\hat T}(\mathcal{G}/\mathcal{P}_Y), \hat S)$,}
          by considering  the action by the larger group $\hat T$.

         Since we consider the cases $Y=\Delta$ and $Y=\emptyset$ only, we simply denote
            $$\psi_x=\psi^\Delta_x;\quad \hat\psi_x=\hat \psi_x^\Delta;\quad \psi_x^0=\psi^\emptyset_x;\quad \hat\psi_x^0=\hat \psi_x^\emptyset .$$
    \end{defn}
     The relation between $\hat{\mathfrak{S}}^0_{x}$ and
      $\hat\psi_y^0$  is expressed in the following lemma.
       \begin{lemma}[Proposition 3.3.1 of \cite{ara}\footnote{The terminologies  used in \cite{ara} and the present paper can be identified as
         follows:$\mathcal{L}_x= \hat{\mathfrak{S}}^0_{x}$
          and
      $\Theta(\mu)(y)=\hat\psi_y^0(\mu)$ for $\mu\in H^*_{\hat T}(\mathcal{G}/\mathcal{B})$.}]\label{ara22}\label{basis2}

 For any     {
      \upshape  $ {x \in W_{\scriptsize\mbox{af}}},\,\,\,\,  \hat{\mathfrak{S}}^0_{x}=\sum_{y\in W_{\scriptsize\mbox{af}}}c_{x, y}\hat\psi_y^0$.
         }  
    \end{lemma}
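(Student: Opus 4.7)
The sum on the right is finite, since $c_{x,y}=0$ unless $y\preccurlyeq x$. Both sides lie in $\mathrm{Hom}_{\hat S}\bigl(H^*_{\hat T}(\mathcal{G}/\mathcal{B}),\hat S\bigr)$, so it suffices to check equality after pairing with each element of the $\hat S$-basis $\{\hat{\mathfrak{S}}^z_0\}_{z\in W_{\scriptsize\mbox{af}}}$ of equivariant Schubert cohomology. Pairing the left-hand side with $\hat{\mathfrak{S}}^z_0$ yields $\delta_{x,z}$ by the duality $\langle\hat{\mathfrak{S}}^0_x,\hat{\mathfrak{S}}^z_0\rangle=\delta_{x,z}$ defining Schubert homology classes.

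The heart of the argument is the localization identity
$$
\hat\psi_y^0\bigl(\hat{\mathfrak{S}}^z_0\bigr)=d_{z,y}\qquad\text{for every }y,z\in W_{\scriptsize\mbox{af}},
$$
expressing the restriction of the Schubert cohomology class $\hat{\mathfrak{S}}^z_0$ at the $\hat T$-fixed point $y\mathcal{B}\in\mathcal{G}/\mathcal{B}$ as the subword polynomial of Definition \ref{de11}; this is the Andersen--Jantzen--Soergel/Billey formula. I would prove it by induction on $\ell(y)$. Fix a reduced expression $y=[\sigma_{\beta_1}\cdots\sigma_{\beta_m}]_{\scriptsize\mbox{red}}$, set $y'=y\sigma_{\beta_m}$, and work with the $\hat T$-equivariant $\mathbb{P}^1$-fibration $\mathcal{G}/\mathcal{B}\to\mathcal{G}/\mathcal{P}_{\{\beta_m\}}$. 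The two $\hat T$-fixed points in the fiber through $y\mathcal{B}$ are $y\mathcal{B}$ and $y'\mathcal{B}$, and a standard push-pull (or BGG--Demazure) computation shows that $\hat\psi_y^0(\hat{\mathfrak{S}}^z_0)-\hat\psi_{y'}^0(\hat{\mathfrak{S}}^z_0)$ is divisible by the twisted root $\gamma_m=\sigma_{\beta_1}\cdots\sigma_{\beta_{m-1}}(\beta_m)$, yielding a recursion that matches the recursion for $d_{z,y}$ obtained by splitting the defining subword sum according to whether the index $m$ is used. The base case $y=1$, as well as the vanishing when $z\not\preccurlyeq y$, is immediate from both sides.

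Granted the localization identity, pairing the right-hand side of the asserted equality with $\hat{\mathfrak{S}}^z_0$ gives
$$
\sum_{y\in W_{\scriptsize\mbox{af}}}c_{x,y}\,\hat\psi_y^0\bigl(\hat{\mathfrak{S}}^z_0\bigr)=\sum_{y\in W_{\scriptsize\mbox{af}}}c_{x,y}\,d_{z,y},
$$
which equals $\delta_{x,z}$ by Proposition \ref{basis} after relabeling the summation index. Thus both sides of the lemma agree on the entire Schubert cohomology basis, and the identity follows.

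The main obstacle is establishing the localization identity with the correct signs and twisted roots: one has to keep careful track of the interplay between the $\beta_k$ appearing in the reduced word and the twisted roots $\gamma_k=\sigma_{\beta_1}\cdots\sigma_{\beta_{k-1}}(\beta_k)$, and of the fact that only reduced subwords contribute to $d_{z,y}$. The cleanest framework for this bookkeeping is the nil-Hecke ring of Kostant--Kumar, which is exactly the machinery invoked in Proposition \ref{basis} and in Arabia's original proof of this lemma.
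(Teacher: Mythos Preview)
The paper does not give its own proof of this lemma; it is quoted as Proposition~3.3.1 of Arabia~\cite{ara} and used as a black box. So there is no ``paper's proof'' to compare against beyond the citation.

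Your argument is correct and is essentially the standard one. The identification $\hat\psi_y^0(\hat{\mathfrak{S}}^z_0)=d_{z,y}$ is precisely the Billey (or Andersen--Jantzen--Soergel) localization formula, and once you have it the lemma is immediate from the matrix-inverse relation of Proposition~\ref{basis}, exactly as you wrote. One small comment: your inductive sketch for the Billey formula is fine, but note that the proof of Proposition~\ref{basis} itself (in Kostant--Kumar/Kumar's book) already encodes essentially the same recursion in the nil-Hecke ring, so in practice the two statements are proved together rather than one being derived from the other; invoking Proposition~\ref{basis} while separately re-deriving the Billey formula is slightly redundant, though not incorrect.
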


     }
 \bigskip

     Note that one has $ \psi_x=\psi_y$ whenever $xW=yW$ (following the definition), and  that
    the canonical map $\pi_*:  H^T_*(\mathcal{G}/\mathcal{B})\rightarrow  H^T_*(\mathcal{G}/\mathcal{P}_0)$, induced by
     the natural projection  $\pi:  \mathcal{G}/\mathcal{B}\rightarrow  \mathcal{G}/\mathcal{P}_0$, is given by (see e.g. Lemma 11.3.3 of \cite{kumar})
           $$\pi_*(\mathfrak{S}^0_{x})=\begin{cases}0,&\mbox{if } x\not\in W_{\scriptsize\mbox{af}}^-\\ \mathfrak{S}_{x},&\mbox{if }x\in W_{\scriptsize\mbox{af}}^-\end{cases}.$$

   \begin{prop}\label{basis3}
       \begin{enumerate}
          \item[(i)] For any {\upshape $x\in W_{\scriptsize\mbox{af}}$}, {\upshape $\psi_x^0=\sum_{y\in W_{\scriptsize\mbox{af}}}d_{y, [x]} {\mathfrak{S}}^0_{y}$}  in $H^T_*(\mathcal{G}/\mathcal{B}).$
          \item[(ii)] For any {\upshape $x\in W_{\scriptsize\mbox{af}}^-$}, {\upshape $\psi_x=\sum_{y\in W_{\scriptsize\mbox{af}}^-}d_{y, [x]} {\mathfrak{S}}_{y}$} in  $ H^T_*(\mathcal{G}/\mathcal{P}_0)$.
          \item[(iii)] For any {\upshape $x\in W_{\scriptsize\mbox{af}}^-$,  ${\mathfrak{S}}_{x}=
                                \sum_{t\in Q^\vee} c_{x, [t]} \psi_{t}$}.
       \end{enumerate}
    \end{prop}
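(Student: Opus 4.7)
The plan is to deduce all three identities from Lemma \ref{ara22} (Arabia's formula $\hat{\mathfrak{S}}^0_x = \sum_{y\in W_{\scriptsize\mbox{af}}} c_{x,y}\hat\psi_y^0$ in the $\hat T$-equivariant setting) by two formal operations: the matrix inversion recorded in Proposition \ref{basis}, and transport through the evaluation $\mathrm{ev}:\hat S\to S,\ \alpha_0\mapsto -\theta$ together with the pushforward $\pi_*:H^T_*(\mathcal{G}/\mathcal{B})\to H^T_*(\mathcal{G}/\mathcal{P}_0)$ of the projection $\pi:\mathcal{G}/\mathcal{B}\to \mathcal{G}/\mathcal{P}_0$.

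For (i), invert Lemma \ref{ara22}: Proposition \ref{basis} gives $\sum_y d_{y,x}c_{y,z}=\delta_{x,z}$, so substituting $\hat{\mathfrak{S}}^0_y = \sum_z c_{y,z}\hat\psi_z^0$ into $\sum_y d_{y,x}\hat{\mathfrak{S}}^0_y$ telescopes to $\hat\psi_x^0$. Hence $\hat\psi_x^0 = \sum_y d_{y,x}\hat{\mathfrak{S}}^0_y$ in $H^{\hat T}_*(\mathcal{G}/\mathcal{B})$. Now apply $\mathrm{ev}$, under which $\hat{\mathfrak{S}}^0_y\mapsto \mathfrak{S}^0_y$, $\hat\psi_y^0\mapsto \psi_y^0$, and $d_{y,x}\mapsto d_{y,[x]}$; this yields (i).

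For (ii), apply $\pi_*$ to the identity just obtained in (i). From the commutative diagram $\iota_x^\Delta=\pi\circ\iota_x^\emptyset$ and the adjoint relation $\langle\pi_*\alpha,\beta\rangle = \langle\alpha,\pi^*\beta\rangle$, one checks immediately that $\pi_*(\psi_x^0)=\psi_x$ for every $x\in W_{\scriptsize\mbox{af}}$. Combined with the formula $\pi_*(\mathfrak{S}^0_y)=\mathfrak{S}_y$ when $y\in W_{\scriptsize\mbox{af}}^-$ and $0$ otherwise (recalled just before the proposition), the right-hand side of (i) pushes forward to exactly $\sum_{y\in W_{\scriptsize\mbox{af}}^-}d_{y,[x]}\mathfrak{S}_y$, giving (ii). For (iii), apply $\mathrm{ev}$ and then $\pi_*$ directly to Lemma \ref{ara22} (the reverse order from (ii)): evaluation yields $\mathfrak{S}^0_x = \sum_{y\in W_{\scriptsize\mbox{af}}} c_{x,y}|_{\alpha_0=-\theta}\psi_y^0$, and $\pi_*$ then sends $\mathfrak{S}^0_x$ to $\mathfrak{S}_x$ (since $x\in W_{\scriptsize\mbox{af}}^-$) and each $\psi_y^0$ to $\psi_y$. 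Since $\psi_y$ depends only on the coset $yW$ and the map $W_{\scriptsize\mbox{af}}/W\to Q^\vee$, $t_\lambda W\mapsto \lambda$, is a bijection, grouping terms by coset turns the coefficient in front of $\psi_t$ into $\sum_{z\in tW}c_{x,z}|_{\alpha_0=-\theta}=c_{x,[t]}$, which is exactly the right-hand side of (iii).

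The principal bookkeeping obstacle is the infinite nature of the sums involved — the one over $W_{\scriptsize\mbox{af}}$ in Lemma \ref{ara22} and the one over $Q^\vee$ in (iii) — together with the fact that the $c_{x,y}$'s are rational rather than polynomial. One must work in the appropriate localization/completion of the equivariant (co)homology rings in which the statements of \cite{ara} and \cite{koku} are formulated, and verify that $\mathrm{ev}$ and $\pi_*$ commute with the infinite sums termwise and that the specialization $\alpha_0=-\theta$ is regular on each sum $\sum_{z\in yW} c_{x,z}$ (this is precisely what makes $c_{x,[y]}$ well-defined). Once this analytic framework is granted, all three statements follow from the formal manipulations outlined above.
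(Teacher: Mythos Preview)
Your proof is correct and follows essentially the same approach as the paper's: invert Arabia's formula via Proposition~\ref{basis} to get $\hat\psi_x^0=\sum_y d_{y,x}\hat{\mathfrak{S}}^0_y$, pass from $\hat T$ to $T$ via the evaluation $\alpha_0\mapsto -\theta$, and then push forward along $\pi$. The only cosmetic difference is in (iii): the paper re-inverts using the already-proved (ii) together with Proposition~\ref{basis} (writing $\mathfrak{S}_x=\sum_y\delta_{x,y}\mathfrak{S}_y=\sum_{y,z}c'_{x,z}d_{y,[z]}\mathfrak{S}_y=\sum_z c'_{x,z}\psi_z$), whereas you apply $\mathrm{ev}$ and $\pi_*$ directly to Lemma~\ref{ara22}; both routes land on $\mathfrak{S}_x=\sum_{z\in W_{\mathrm{af}}}c'_{x,z}\psi_z$ before grouping by cosets. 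Your phrasing ``apply $\mathrm{ev}$'' on homology classes is slightly informal---the paper instead pairs against the cohomology basis and uses $\psi_x^0\circ\mathrm{ev}=ev\circ\hat\psi_x^0$---but this is exactly the rigorous content of what you wrote.
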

 \begin{proof}
   \begin{enumerate}
      \item[(i)] It follows from Proposition \ref{basis} and  Lemma \ref{basis2} that
                          $$\hat\psi_x^0=\sum_{z\in W_{\scriptsize\mbox{af}}}\delta_{x, z}\hat\psi_z^0
                          =\sum_{z\in W_{\scriptsize\mbox{af}}}\sum_{y\in W_{\scriptsize\mbox{af}}} d_{y, x}c_{y, z}\hat\psi_z^0
                           =\sum_{y\in W_{\scriptsize\mbox{af}}}d_{y, x} \hat{\mathfrak{S}}^0_{y}.$$
               Note that $\iota_x^\emptyset$ is both $T$-equivariant and $\hat T$-equivariant  and   $\mbox{id}_{\mbox{pt}}$, $\mbox{id}_{\mathcal{G}/\mathcal{B}}$
               are morphisms preserving the $T$-action. Hence,
                the identity  $\mbox{id}_{\mathcal{G}/\mathcal{B}}\circ \iota_x^\emptyset=\iota_x^\emptyset\circ \mbox{id}_{\mbox{pt}}$
              induces $\psi_x^0\circ \mbox{ev}= {ev}\circ \hat\psi_x^0: H^*_{\hat T}(\mathcal{G}/\mathcal{B})\rightarrow S$.
                  Therefore,
                  $$\psi_x^0(\mathfrak{S}^{y}_0)=\psi_x^0\circ \mbox{ev}(\hat{\mathfrak{S}}^{y}_0)
                            = {ev}\circ\hat\psi_x^0(\hat{\mathfrak{S}}^{y}_0)= {ev}(d_{y, x})=d_{y, [x]}.$$
                   Hence,  $ \psi_x^0=\sum_{y\in W_{\scriptsize\mbox{af}}} d_{y, [x]}\mathfrak{S}_{y}^0$,  where    the summation  contains only finitely
                   many nonzero terms since $d_{y, [x]}=0$ whenever $y\not\preccurlyeq x$. Thus
            $\psi_x^0\in   H_*^T(\mathcal{G}/\mathcal{B})$.

      \item[(ii)]   Note that $\iota_x^\Delta$, $\iota_x^\emptyset$ and $\pi$ are all $T$-equivariant maps.
                    Hence, the identity $\iota_x^\Delta=\pi\circ\iota_x^\emptyset$
                   induces    $\psi_x=\psi_x^0\circ\pi^*: H^*_{ T}(\mathcal{G}/\mathcal{P}_0)\rightarrow S$.
                  Therefore, $$\psi_x=\pi_*(\psi_x^0)=\pi_*(\sum\nolimits_{y\in W_{\scriptsize\mbox{af}}} d_{y, [x]}\mathfrak{S}_{y}^0)=
                                 \sum\nolimits_{y\in W_{\scriptsize\mbox{af}}^-} d_{y, [x]}\mathfrak{S}_{y} \in   H_*^T(\mathcal{G}/\mathcal{P}_0)
                                  ,$$
      noting that   the summation  contains only finitely many nonzero terms.
  \item[(iii)]
       Denote $c_{x, y}'=c_{x, y}\big|_{\alpha_0=-\theta}$.   It follows from (ii) and Proposition \ref{basis} that
                  $${\mathfrak{S}}_{x}=
                            \sum_{y\in W_{\scriptsize\mbox{af}}}\delta_{x, y}{\mathfrak{S}}_{y}
                          =\sum_{y\in W_{\scriptsize\mbox{af}}}\sum_{z\in W_{\scriptsize\mbox{af}}} c_{x, z}' d_{y, [z]}{\mathfrak{S}}_{y}
                           =\sum_{z\in W_{\scriptsize\mbox{af}}}c_{x, z}'\psi_z.$$
                   Note that $\psi_z=\psi_y$ whenever $z\in yW$, and
                    each coset $yW$ has a unique representative of translation in $Q^\vee\cong W_{\scriptsize\mbox{af}}/W$.
                     Hence for any $x\in W_{\scriptsize\mbox{af}}^-$,
                     $$ \mathfrak{S}_{x}
                           =\sum_{z\in W_{\scriptsize\mbox{af}}}c_{x, z}'  \psi_z
                           =\sum_{t\in Q^\vee}\big(\sum_{z\in tW}c_{x, z}'\big)\psi_t=\sum_{t\in Q^\vee}c_{x, [t]}\psi_t.$$
   \end{enumerate}
\vspace{-0.2cm}
 \end{proof}

  The above proposition was essentially contained in Peterson's notes \cite{peterson}.

 The Pontryagin product gives an associative $S$-algebra structure on  the equivariant homology $H^T_*(\Omega K)$.
     The
     following proposition  was
                 stated in \cite{peterson}  by Peterson. We learned  the following  proof from Thomas Lam.
\begin{prop}\label{mul}
         For any $\lambda, \mu\in Q^\vee$,\, the Pontryagin product of $\psi_{t_\lambda}$ and $\psi_{t_\mu}$ in $H^T_*(\Omega K)$ is given by
                $\psi_{t_\lambda}\psi_{t_\mu}=\psi_{t_\lambda t_\mu}=\psi_{t_{\lambda+\mu}}.$
       \end{prop}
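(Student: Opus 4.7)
The plan is to prove this identity by identifying each $\psi_{t_\lambda}$ with the pushforward of the equivariant point class under the $T$-equivariant inclusion $\iota_{t_\lambda} : \mathrm{pt} \to \Omega K$, and then applying naturality of pushforward with respect to the Pontryagin product. The key geometric input is that translation elements of $W_{\mathrm{af}}$ correspond to genuine group homomorphisms $\mathbb{S}^1 \to T$, and the pointwise multiplication of two such homomorphisms is again a homomorphism because $T$ is abelian.

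First I would translate $\psi_{t_\lambda}$ into a pushforward. Under the homotopy equivalence $\mathcal{G}/\mathcal{P}_0 \simeq \Omega K$, the coset $t_\lambda \mathcal{P}_0$ corresponds to the based loop $\gamma_\lambda : \mathbb{S}^1 \to T \subset K$ given by the cocharacter $\lambda \in Q^\vee = \mathrm{Hom}(\mathbb{S}^1, T)$. Since $\gamma_\lambda$ takes values in the abelian torus $T$, it is fixed by the pointwise conjugation action of $T$, so $\iota_{t_\lambda}$ is $T$-equivariant and yields a class $(\iota_{t_\lambda})_*[\mathrm{pt}] \in H_0^T(\Omega K)$. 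By the projection formula, for every $\xi \in H_T^*(\Omega K)$,
\[
\langle (\iota_{t_\lambda})_*[\mathrm{pt}], \xi \rangle \;=\; \iota_{t_\lambda}^* \xi \;=\; \psi_{t_\lambda}(\xi),
\]
so $(\iota_{t_\lambda})_*[\mathrm{pt}]$ and $\psi_{t_\lambda}$ agree as elements of $\mathrm{Hom}_S(H_T^*(\Omega K), S)$; note that by Proposition \ref{basis3}(i), $\psi_{t_\lambda}^0$ is a finite $S$-linear combination of Schubert classes and so the same is true for $\psi_{t_\lambda} = \pi_*(\psi_{t_\lambda}^0)$, so the functional lies in the Borel--Moore homology as required.

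Next, I would apply naturality. The Pontryagin multiplication $m : \Omega K \times \Omega K \to \Omega K$ is $T$-equivariant, and the external product of two fixed-point pushforwards is itself a pushforward; hence
\[
\psi_{t_\lambda}\,\psi_{t_\mu} \;=\; m_*\bigl((\iota_{t_\lambda})_*[\mathrm{pt}] \otimes (\iota_{t_\mu})_*[\mathrm{pt}]\bigr) \;=\; \bigl(m \circ (\iota_{t_\lambda} \times \iota_{t_\mu})\bigr)_*[\mathrm{pt}].
\]
The final step is the pointwise computation: because $\gamma_\lambda$ and $\gamma_\mu$ are both group homomorphisms into the abelian group $T$, the product loop $s \mapsto \gamma_\lambda(s)\gamma_\mu(s)$ is again a homomorphism, namely the cocharacter $\lambda+\mu$. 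Hence $m \circ (\iota_{t_\lambda} \times \iota_{t_\mu}) = \iota_{t_{\lambda+\mu}}$, which gives $\psi_{t_\lambda}\psi_{t_\mu} = \psi_{t_{\lambda+\mu}}$. The equality $t_\lambda t_\mu = t_{\lambda+\mu}$ in $W_{\mathrm{af}} = W \ltimes Q^\vee$ is then just the group law on the translation subgroup.

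The main obstacle I expect is the identification in the first step: carefully matching the functional $\psi_{t_\lambda}$, defined via $\iota_{t_\lambda}^*$ on equivariant cohomology, with the pushforward of the equivariant point class living inside the equivariant Borel--Moore homology submodule of $\mathrm{Hom}_S(H_T^*(\Omega K), S)$. Once this identification (which uses only the projection formula together with the finiteness already encoded in Proposition \ref{basis3}) is in place, the remainder of the argument is a purely formal consequence of functoriality of pushforward and the commutativity of the maximal torus $T$.
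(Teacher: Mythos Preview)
Your proof is correct and follows essentially the same approach as the paper: both identify $\psi_{t_\lambda}$ with the class induced by the $T$-fixed point given by the cocharacter $\lambda:\mathbb{S}^1\to T$, then use functoriality of the Pontryagin product together with the fact that pointwise multiplication of cocharacters into the abelian torus $T$ corresponds to addition in $Q^\vee$. The only difference is that you spell out the projection-formula identification and the pushforward formalism more carefully than the paper does.
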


\begin{proof}

\bigskip
 Identify $\lambda\in Q^\vee$ with the co-character $\lambda: \mathbb{S}^1 \to T$,  which gives a point $\lambda:\mathbb{S}^1\to K$ in $\Omega K$.
    These are the $T$-fixed points of $\Omega K$.
  Note that $\psi_t$ is the map $H^*_T(\Omega K)\rightarrow H^*_T(\mbox{pt})$ induced by the map
   $\mbox{pt}\to \Omega K$ with image $t$. Thus
     $\psi_{t_\lambda} \psi_{t_\mu}$ is the map $H^*_T(\Omega K) \longrightarrow H^*_T(\mbox{pt})$ induced by the following composition of maps:
        $$\mbox{pt} \longrightarrow \Omega K \times \Omega K \longrightarrow \Omega K,\quad \mbox{which is given by }
         \mbox{pt} \mapsto (t_{\lambda}, t_\mu) \mapsto t_{\lambda}t_\mu.$$

 Note that pointwise multiplication on the group takes the loops $\lambda: \mathbb{S}^1 \to T$, $\mu: \mathbb{S}^1  \to T$ to the loop $(\lambda+\mu):\mathbb{S}^1 \to T.$
 Thus  $\psi_{t_\lambda} \psi_{t_\mu} = \psi_{t_\lambda t_\mu}=\psi_{t_{\lambda+\mu}}$.

\end{proof}

    Now we can derive the proof of Theorem \ref{str} easily.
    \begin{proof}[Proof of Theorem \ref{str}]

  It follows from Proposition \ref{basis3} and Proposition \ref{mul} that
              \begin{align*}
               \mathfrak{S}_x\mathfrak{S}_y&=\big(\sum_{\lambda\in Q^\vee} c_{x, [t_{\lambda}]} \psi_{t_\lambda}\big)\big(\sum_{\mu\in Q^\vee} c_{y, [t_{\mu}]} \psi_{t_\mu}\big)\\
                         &= \sum\nolimits_{\lambda, \mu\in Q^\vee} c_{x, [t_{\lambda}]}  c_{y, [t_{\mu}]} \psi_{t_\lambda}\psi_{t_\mu}\\
                         &= \sum\nolimits_{\lambda, \mu\in Q^\vee} c_{x, [t_{\lambda}]}  c_{y, [t_{\mu}]} \psi_{t_{\lambda+\mu}}\\
                         &= \sum_{\lambda, \mu\in Q^\vee, z\in W_{\scriptsize\mbox{af}}^-} c_{x, [t_{\lambda}]}  c_{y, [t_{\mu}]} d_{z, [t_{\lambda+\mu}]}\mathfrak{S}_z.
              \end{align*}
           Hence, $b_{x,y}^{\,z}=\sum_{\lambda, \mu\in Q^\vee}c_{x, [t_\lambda]}c_{y, [t_\mu]}d_{z, [t_{\lambda+\mu}]}$.
  \end{proof}

 \begin{remark}
   The equivariant Schubert structure constants
    for the equivariant cohomology of based loop group $\Omega K$ can also be expressed in terms of $c_{x, [y]}$ and $d_{x, [y]}$. (See appendix \ref{equi} for more details.)

 \end{remark}

    Note that $c_{x, [t_{\lambda}]},  c_{y, [t_{\mu}]}$ and $ d_{z, [t_{\lambda+\mu}]} $ are homogeneous rational functions of degree $-\ell(x), -\ell(y)$ and $\ell(z)$
    respectively. Since $b_{x, y}^{\, z}$ is a polynomial, we obtain the following corollary.
    \begin{cor}\label{coe}
            Let {\upshape $x, y, z\in W_{\scriptsize\mbox{af}}^-$}, 
               one has $b_{x, y}^{\, z}=0$ unless $\ell(z)\geq\ell(x)+\ell(y)$. Furthermore
             if $\ell(z)=\ell(x)+\ell(y)$, then  the rational function  $b_{x, y}^{\, z}$ is a constant.
    \end{cor}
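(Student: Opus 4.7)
The plan is to exploit the homogeneity inherent in the combinatorial definitions of $c_{x,[y]}$ and $d_{x,[y]}$, and then invoke the polynomiality of $b_{x,y}^{z}$ which was already recorded just before Theorem \ref{str}.

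First I would verify the degree counts. From Definition \ref{de11}, if $x = [\sigma_{\beta_1}\cdots\sigma_{\beta_m}]_{\scriptsize\mbox{red}}$ with $m = \ell(x)$, then $c_{x,y}$ is a sum of terms of the form $(-1)^m$ divided by a product of $m$ linear forms $\sigma_{\beta_1}^{\varepsilon_1}\cdots\sigma_{\beta_k}^{\varepsilon_k}(\beta_k)$, each of degree $1$ in the $\alpha_i$'s. Hence $c_{x,y}$ is homogeneous of degree $-m = -\ell(x)$, and the specialization $\alpha_0 \mapsto -\theta$ preserves this homogeneity so $c_{x,[y]}$ is homogeneous of degree $-\ell(x)$. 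Analogously $d_{y,x}$ is a sum of products $\gamma_{i_1}\cdots\gamma_{i_r}$ with $r = \ell(y)$, giving $d_{y,[x]}$ homogeneous of degree $\ell(y)$.

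Next I would apply these degree facts to the formula of Theorem \ref{str}. Each summand $c_{x,[t_\lambda]} c_{y,[t_\mu]} d_{z,[t_{\lambda+\mu}]}$ is a homogeneous rational function of degree $-\ell(x) - \ell(y) + \ell(z)$, and since this degree is independent of $\lambda, \mu$, the total sum $b_{x,y}^{z}$ (when nonzero) is a homogeneous rational function of degree $\ell(z) - \ell(x) - \ell(y)$.

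Finally I would combine this with the polynomiality of $b_{x,y}^{z} \in S$ (established in the paragraph preceding Theorem \ref{str}, coming from the $S$-algebra structure on $H^T_*(\Omega K)$ and the fact that Schubert classes form an $S$-basis with polynomial structure constants). A homogeneous rational function that is also a polynomial of negative degree must be zero: this immediately gives $b_{x,y}^{z} = 0$ whenever $\ell(z) < \ell(x) + \ell(y)$. In the boundary case $\ell(z) = \ell(x) + \ell(y)$, a homogeneous polynomial of degree $0$ is a constant, yielding the second assertion. There is no substantial obstacle here — the only thing to be careful about is that we use the polynomiality of $b_{x,y}^{z}$ as an input (which is already in hand), since without it the formula only gives rational homogeneity and would not force vanishing.
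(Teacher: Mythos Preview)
Your proposal is correct and is essentially the same argument as the paper's. The paper records exactly this reasoning in the sentence immediately preceding the corollary: it notes that $c_{x,[t_\lambda]}$, $c_{y,[t_\mu]}$, $d_{z,[t_{\lambda+\mu}]}$ are homogeneous of degrees $-\ell(x)$, $-\ell(y)$, $\ell(z)$ respectively, and then invokes the already-known polynomiality of $b_{x,y}^{\,z}$ to deduce the corollary; you have simply spelled out the degree verification from Definition~\ref{de11} in more detail.
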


\section{Explicit formula for (equivariant) quantum cohomology of $G/B$}\label{secequivquan}
 In this section, we give an alternative proof of Peterson's Theorem proved by Lam and Shimozono, and
    derive the following combinatorial formula for quantum Schubert structure constants $N_{u, v}^{w, \lambda}$'s,
                   which covers  Main Theorem as stated in the introduction.
\begin{thm}\label{th22}
  For any $u, v, w\in W$, $\lambda\in Q^\vee$  with $\lambda\succcurlyeq 0 $, the quantum Schubert structure constant   $N_{u, v}^{w, \lambda}$ for
               $G/B$ is given as follows.

                Denote $A=-12n(n+1)\sum_{i\in I}w_i^\vee$ (which is in fact a regular and anti-dominant element in $Q^\vee$).
               Let  $x=ut_A, y=vt_A$ and $z=wt_{2A+\lambda}$.
       \begin{enumerate}
         \item If $\langle \lambda, 2\rho\rangle \neq \ell(u)+\ell(v)-\ell(w)$, then  $N_{u, v}^{w, \lambda}=0$.

         \item If $\langle \lambda, 2\rho\rangle = \ell(u)+\ell(v)-\ell(w)$, then
              \begin{enumerate}

              \item  The   rational function
                                 $ \sum_{\lambda_1, \lambda_2\in  Q^\vee}
                                c_{x, [ t_{\lambda_1}]}c_{y, [ t_{\lambda_2}]}d_{z, [t_{\lambda_1+\lambda_2}]} 
                                 $,
                            which belongs to $\mathbb{Q}[\alpha_1^{\pm1}, \cdots, \alpha_n^{\pm1}]$,  
                                 is in fact
                                  a constant.
              \item The quantum Schubert structure constant $  N_{u, v}^{w, \lambda}$ is given by
                                       $$  N_{u, v}^{w, \lambda}=\sum_{\lambda_1, \lambda_2\in  Q^\vee}
                                c_{x, [ t_{\lambda_1}]}c_{y, [ t_{\lambda_2}]}d_{z, [t_{\lambda_1+\lambda_2}]} 
                                 .$$
                           \end{enumerate}
                Furthermore, one has the following (by simplifying the summation)
                    $$ N_{u, v}^{w, \lambda}=
                     \sum_{(\lambda_1, \lambda_2, v_1)\in \Gamma\times W}
                       c_{x, [ v_1t_{\lambda_1}]}c_{y, [ v_1t_{\lambda_2}]}d_{z, [v_1t_{\lambda_1+\lambda_2}]}
                            ,$$
                   where  $\Gamma=\{(\lambda_1, \lambda_2)~|~ \lambda_1, \lambda_2\succcurlyeq A, \lambda_1+\lambda_2\preccurlyeq 2A+\lambda, \lambda_1, \lambda_2\in\tilde Q^\vee\}$.

       \end{enumerate}
\end{thm}

The above theorem is in fact a direct consequence of Peterson's Theorem and Theorem \ref{str}, except for the last statement of it.  As is shown in the above theorem,   our formula is a combinatorial formula, other than a recursive algorithm.
   Although rational functions in equivariant parameters are involved in our formula, we can take their valuations at a special point, so that
  the formula can be easily programmed. 

 In section 4.1, we  calculate certain  structure coefficients $b_{x, y}^{\, z}$'s for the Pontryagin product on $H_*^T(\Omega K)$.
 These calculations allow us to  re-establish the equivalence   between  the torus-equivariant quantum cohomology of $G/B$ and  the torus-equivariant homology of $\Omega K$
 after localization, which is explained in section 4.2.
  Finally in section 4.3, we prove Theorem \ref{th22}. 

 \subsection{Calculations for   structure coefficients $b_{x, y}^{\, z}$'s}\label{seccalcustr}

  In this subsection, we analyze the summation in the formula for the structure coefficients $b_{x, y}^{\, z}$'s.
  We obtained several useful formulas, including   the following two as the main results of this subsection. The proofs are elementary and  combinatorial in nature.
  These two formulas have been stated in \cite{peterson} and proved  by
   Lam-Shimozono  \cite{lamshi} by a different method using nil-Hecke ring and  Peterson $j$-isomorphism.

\begin{prop}
 \label{prop22} For any {\upshape $ wt_\lambda, t_\mu\in W_{\scriptsize\mbox{af}}^-$},\,one has
            $\mathfrak{S}_{wt_\lambda}\mathfrak{S}_{t_\mu}=  \mathfrak{S}_{wt_{\lambda+\mu}}.$
\end{prop}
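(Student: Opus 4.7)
The strategy is to reduce to a combinatorial identity on $c$-functions using Proposition~\ref{basis3}(iii) and Proposition~\ref{mul}.

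Expanding
\[
\mathfrak S_{wt_\lambda}=\sum_{\nu_1\in Q^\vee}c_{wt_\lambda,[t_{\nu_1}]}\psi_{t_{\nu_1}},\qquad\mathfrak S_{t_\mu}=\sum_{\nu_2\in Q^\vee}c_{t_\mu,[t_{\nu_2}]}\psi_{t_{\nu_2}}
\]
via Proposition~\ref{basis3}(iii) and applying Proposition~\ref{mul} to the Pontryagin product yields
\[
\mathfrak S_{wt_\lambda}\mathfrak S_{t_\mu}=\sum_{\pi\in Q^\vee}\Big(\sum_{\nu_1+\nu_2=\pi}c_{wt_\lambda,[t_{\nu_1}]}c_{t_\mu,[t_{\nu_2}]}\Big)\psi_{t_\pi}.
\]
After verifying $wt_{\lambda+\mu}\in W_{\scriptsize \mbox{af}}^-$ from the hypotheses (a short length-function argument), expanding $\mathfrak S_{wt_{\lambda+\mu}}=\sum_\pi c_{wt_{\lambda+\mu},[t_\pi]}\psi_{t_\pi}$ in the same way and using the linear independence of $\{\psi_{t_\pi}\}_{\pi\in Q^\vee}$ reduces the proposition to the combinatorial identity
\[
c_{wt_{\lambda+\mu},[t_\pi]}=\sum_{\nu_1+\nu_2=\pi}c_{wt_\lambda,[t_{\nu_1}]}c_{t_\mu,[t_{\nu_2}]}\qquad(\star)
\]
for every $\pi\in Q^\vee$.

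To prove $(\star)$, I would first establish $\ell(wt_{\lambda+\mu})=\ell(wt_\lambda)+\ell(t_\mu)$, so that a reduced expression for $wt_{\lambda+\mu}$ may be obtained by concatenating reduced expressions for $wt_\lambda$ and $t_\mu$. Splitting each subword $(\varepsilon_1,\ldots,\varepsilon_m)$ in the definition of $c_{wt_{\lambda+\mu},z}$ at this junction yields the factorization
\[
c_{wt_{\lambda+\mu},z}=\sum_{z=z_1z_2}c_{wt_\lambda,z_1}\cdot\bigl(z_1\triangleright c_{t_\mu,z_2}\bigr),
\]
where $z_1$ acts on the rational function $c_{t_\mu,z_2}$ through its natural action on roots. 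Summing over $z\in t_\pi W$ and specializing $\alpha_0=-\theta$: since $W_{\scriptsize \mbox{af}}$ fixes the null root $\delta=\alpha_0+\theta$, the specialized $z_1$-action factors through its finite image $\bar{z_1}\in W$. Grouping the resulting sum by cosets $[t_{\nu_1}]\ni z_1$ and $[t_{\nu_2}]\ni z_2$ (and noting $z_1z_2\in[t_\pi]$ iff $\nu_2=\bar{z_1}^{-1}(\pi-\nu_1)$), invoking the $W$-equivariance
\[
\bar{z_1}\triangleright c_{t_\mu,[t_{\bar{z_1}^{-1}\kappa}]}\big|_{\alpha_0=-\theta}=c_{t_\mu,[t_\kappa]}\big|_{\alpha_0=-\theta}
\]
of the specialized pure-translation $c$-function cancels the $\bar{z_1}$-twist and produces $(\star)$.

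\textbf{Main obstacle.} The delicate point is the $W$-equivariance of $c_{t_\mu,[t_\kappa]}|_{\alpha_0=-\theta}$, which is what converts the $\bar{z_1}$-twisted sum from the factorization into the untwisted convolution in $(\star)$. I would reduce it to the element-level assertion $c_{t_\mu,wz}|_{\alpha_0=-\theta}=w\triangleright c_{t_\mu,z}|_{\alpha_0=-\theta}$ for $w\in W$ and $z\in W_{\scriptsize \mbox{af}}$, which in turn requires tracking how left multiplication by $w$ acts on reduced decompositions of $t_\mu$ and how the associated product of affine roots transforms after specialization. The length additivity in the factorization step is a routine preliminary lemma under the antidominance conditions encoded in $W_{\scriptsize \mbox{af}}^-$.
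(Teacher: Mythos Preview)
Your route is genuinely different from the paper's and runs into a real gap at the step you already flagged as the ``main obstacle.'' The element-level assertion
\[
c_{t_\mu,\,wz}\big|_{\alpha_0=-\theta}\;=\;w\triangleright c_{t_\mu,\,z}\big|_{\alpha_0=-\theta}\qquad(w\in W,\ z\in W_{\scriptsize\mbox{af}})
\]
is \emph{false}, so the reduction you propose cannot go through. In type $A_2$ with $\mu=-\theta^\vee$ one has $t_\mu=[s_1s_2s_1s_0]_{\scriptsize\mbox{red}}$; a direct computation gives
\[
c'_{t_\mu,\,1}=-\frac{1}{\alpha_1^2\alpha_2\theta},\qquad c'_{t_\mu,\,s_1}=\frac{1}{\alpha_1\alpha_2^2\theta},
\]
so $s_1\triangleright c'_{t_\mu,\,1}=-\frac{1}{\alpha_1^2\alpha_2\theta}\neq c'_{t_\mu,\,s_1}$. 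Thus ``tracking how left multiplication by $w$ acts on reduced decompositions of $t_\mu$'' cannot yield the elementwise identity; left multiplication by $w\in W$ simply does not interact with a fixed reduced word for $t_\mu$ in the way your sketch needs. The coset-level $W$-equivariance you actually need in $(\star)$ may well be true (it must be, \emph{a posteriori}), but your argument supplies no proof of it.

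For comparison, the paper avoids this equivariance issue entirely. It works with the structure-constant formula $b_{x,y}^{\,z}=\sum c_{x,[t_1]}c_{y,[t_2]}d_{z,[t_1t_2]}$ rather than with the $\psi$-basis convolution. Under the extra hypothesis $\langle\mu,\alpha_i\rangle<-\ell(\omega_0)$ for all $i$, a length-count (Proposition~\ref{sum55}) forces $[t_1]=[wt_\lambda]$, $[t_2]=[wt_\mu]$, and $z=wt_{\lambda+\mu}$, so a single term survives; that term is then computed to equal $1$ using the explicit product formulas of Lemma~\ref{coe77} for $c_{vt_\lambda,ut_\lambda}$ and $d_{wt_{\lambda+\mu},wt_{\lambda+\mu}}$. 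The general case is deduced by the associativity trick $\mathfrak S_x\mathfrak S_y\mathfrak S_{t_\kappa}=\mathfrak S_x\mathfrak S_{t_{\mu+\kappa}}$ with $\kappa$ chosen sufficiently antidominant. If you want to salvage your approach, you would need an independent proof of the \emph{coset-level} identity $v\big(c_{t_\mu,[t_{v^{-1}\kappa}]}\big)=c_{t_\mu,[t_\kappa]}$; this does not reduce to the elementwise statement.
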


\begin{prop}\label{prop33} Let {\upshape $\sigma_it_\lambda,  ut_\mu\in W_{\scriptsize\mbox{af}}^-$}, where
          $\sigma_i=\sigma_{\alpha_i}$ with $i\in I$. Then one has
        $$\mathfrak{S}_{\sigma_it_\lambda}\mathfrak{S}_{ut_\mu}= (u(w_i)-w_i)\mathfrak{S}_{ut_{\lambda+\mu}}
                      \,+\, \sum_{\gamma\in\Gamma_1}\langle\gamma^\vee, w_i\rangle\mathfrak{S}_{u\sigma_\gamma t_{\lambda+\mu}}
                      \,+\, \sum_{\gamma\in\Gamma_2}\langle\gamma^\vee, w_i\rangle\mathfrak{S}_{u\sigma_\gamma t_{\lambda+\mu+\gamma^\vee}}\footnote{
                     The coefficient    $\langle\gamma^\vee, w_i\rangle$ is always equal to zero if either
                           $\gamma\in \Gamma_1$ and $u\sigma_\gamma t_{\lambda+\mu}\notin W_{\scriptsize\mbox{af}}^- $ or
                           $\gamma\in\Gamma_2$ and $u\sigma_\gamma t_{\lambda+\mu+\gamma^\vee}\notin W_{\scriptsize\mbox{af}}^- $.},$$
     where $\Gamma_1$ and $\Gamma_2$ are as defined in section 2.1.
\end{prop}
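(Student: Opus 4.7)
The plan is to apply Theorem \ref{str} to $x = \sigma_i t_\lambda$ and $y = u t_\mu$, obtaining
$$b_{\sigma_i t_\lambda,\, u t_\mu}^{\,z} = \sum_{\nu_1, \nu_2 \in Q^\vee} c_{\sigma_i t_\lambda, [t_{\nu_1}]}\, c_{u t_\mu, [t_{\nu_2}]}\, d_{z, [t_{\nu_1 + \nu_2}]},$$
and to evaluate the right-hand side combinatorially via Definition \ref{de11}. The first step is to localize the problem using Corollary \ref{coe}: $b^z$ is a homogeneous polynomial of degree $\ell(z) - \ell(\sigma_i t_\lambda) - \ell(u t_\mu) \geq 0$, and matching this degree against the linear coefficient $u(w_i) - w_i$ of $\mathfrak{S}_{ut_{\lambda+\mu}}$ and the constant coefficients of the $\Gamma_1$ and $\Gamma_2$ terms singles out precisely the three families of $z$ appearing in the statement. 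The length identities $\ell(u\sigma_\gamma) = \ell(u) + 1$ and $\ell(u\sigma_\gamma) = \ell(u) + 1 - \langle\gamma^\vee, 2\rho\rangle$ defining $\Gamma_1$ and $\Gamma_2$ are exactly what is needed to balance the affine lengths on the two sides.

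The second step is to compute the contribution to each candidate $z$. Proposition \ref{prop22} already handles the case where one factor is a pure translation, and I would apply the same combinatorial technique here: expand $c_{\sigma_i t_\lambda, [t_{\nu_1}]}$ and $c_{u t_\mu, [t_{\nu_2}]}$ directly from Definition \ref{de11}, using reduced decompositions of $\sigma_i t_\lambda$ and $u t_\mu$ in which the finite Weyl parts $\sigma_i$ and (a reduced word for) $u$ appear as prescribed subwords. The translations $t_\lambda, t_\mu$ then enter the overall sum through the $d_{z, [t_{\nu_1+\nu_2}]}$ factor expanded on a reduced decomposition of $z$. The three families of $z$ correspond to three combinatorial mechanisms by which the distinguished simple reflection $\sigma_i$ interacts with the reduced word of $u t_\mu$: (i) $\sigma_i$ is absorbed into the fixed-point evaluation, producing the linear equivariant term $u(w_i) - w_i$ (in parallel with the equivariant Chevalley formula); (ii) $\sigma_i$ combines with a reflection from a reduced word of $u$ to form a finite reflection $\sigma_\gamma$ with $\ell(u\sigma_\gamma) = \ell(u) + 1$, giving the $\Gamma_1$ contribution with coefficient $\langle\gamma^\vee, w_i\rangle$; (iii) $\sigma_i$ combines with an \emph{affine} reflection $\sigma_{\gamma + m\delta} = \sigma_\gamma t_{-m\gamma^\vee}$ coming from the translation part of $u t_\mu$, producing both the shift by $\gamma^\vee$ on the resulting translation and the length drop $\langle\gamma^\vee, 2\rho\rangle - 1$ that characterizes $\Gamma_2$.

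The main obstacle is the combinatorial bookkeeping in the third (affine) mechanism. One must identify which affine reflections in a reduced decomposition of $u t_\mu$ can absorb $\sigma_i$, show that the resulting sum over sub-sequences collapses to the single scalar $\langle\gamma^\vee, w_i\rangle$ after specializing $\alpha_0 = -\theta$, and verify that the translation shift matches $\gamma^\vee$ exactly for $\gamma \in \Gamma_2$. This case analysis, parallel to the passage from the classical to the quantum Chevalley formula on $G/B$, is the delicate heart of the proof, although each individual verification is elementary in nature.
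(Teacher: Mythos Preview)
Your outline correctly starts from Theorem \ref{str} and Corollary \ref{coe}, but the degree constraint from Corollary \ref{coe} does \emph{not} by itself ``single out precisely the three families of $z$'': it only bounds $\ell(z)$, and there are in general many $z\in W_{\scriptsize\mbox{af}}^-$ of a given length. What actually pins down $z$ (and, simultaneously, the contributing pairs $(\nu_1,\nu_2)$) is a Bruhat-order analysis on the \emph{cosets} $[t_{\nu_1}],[t_{\nu_2}],[t_{\nu_1+\nu_2}]$: the nonvanishing of $c_{x,[t_{\nu_1}]}$, $c_{y,[t_{\nu_2}]}$, $d_{z,[t_{\nu_1+\nu_2}]}$ forces $m_{[t_{\nu_1}]}\preccurlyeq x$, $m_{[t_{\nu_2}]}\preccurlyeq y$, $z\preccurlyeq m_{[t_{\nu_1+\nu_2}]}$, and combining these with length inequalities is what reduces the infinite sum to the finitely many explicit triples recorded in Proposition \ref{coe44}. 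This is precisely the content of Lemma \ref{sum66} and Propositions \ref{pos1}, \ref{pos2}, and it is not captured by your ``three mechanisms'', which attempt to track the single reflection $\sigma_i$ through reduced words rather than to analyse the coset structure of $W_{\scriptsize\mbox{af}}/W$.

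Two further ingredients are missing from your sketch. First, the coset analysis above only works under the extra hypothesis $\langle\lambda,\alpha_j\rangle,\langle\mu,\alpha_j\rangle < -\ell(\omega_0)$ for all $j$; the paper proves the formula in that regime and then deduces the general case from Proposition \ref{prop22} via associativity and commutativity of the Pontryagin product. Second, once the sum is cut down to Proposition \ref{coe44}, the evaluation of each surviving term is not a direct subword expansion from Definition \ref{de11} as you propose, but goes through the closed formulas of Lemmas \ref{coe77} and \ref{coe88}, which express $c_{vt_\lambda,ut_\lambda}$ and the relevant $d$'s in terms of finite-Weyl-group data; that is where the scalars $u(w_i)-w_i$ and $\langle\gamma^\vee,w_i\rangle$ actually appear, after cancellation of products $\prod_j H_j$ under the specialization $\alpha_0=-\theta$.
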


Since the proofs of all the lemmas in this subsection only involves simple arguments, we leave them in the appendix (section \ref{appelemmaproof}).

 Each element $x\in W_{\scriptsize\mbox{af}}=W\ltimes Q^\vee$ can be written as $x=wt_\lambda$ for  unique $w\in W$ and $t_\lambda\in Q^\vee$.
  Recall that for any $\mu\in Q^\vee$, $\mu$ is called regular  if and only if $\langle\lambda, \alpha_i\rangle\neq 0$ for all $i\in I$; $\mu$ is called
   anti-dominant if and only if  $\langle\mu, \alpha_i\rangle\leq 0$ for all $i\in  I$. We denote by
     $\tilde Q^\vee$  the set of anti-dominant elements in $Q^\vee$.
 The minimal length representatives in $W_{\scriptsize\mbox{af}}^-$ are characterized as follows.

 \begin{lemma}[see  e.g.  \cite{lamshi}]\label{len22} Let $\lambda\in Q^\vee$ and $w\in W$.
  Then {\upshape $wt_\lambda\in W_{\scriptsize\mbox{af}}^- $}   if and only if {$\lambda\in\tilde Q^\vee$} and if
        $\langle\lambda, \alpha_i\rangle=0$ then $w(\alpha_i)\succcurlyeq0$.
    When this happens,   $\ell(wt_\lambda)=\ell(t_\lambda)-\ell(w)$. 
   Furthermore,  $\ell(t_{\mu})=\langle w(\mu), -2\rho\rangle$ whenever    $\mu\in Q^\vee$ satisfies   $w(\mu)\in\tilde Q^\vee$.
 \end{lemma}

  Recall that for any    $x, y\in W_{\scriptsize\mbox{af}}$,
         $y\preccurlyeq x$ with respect to the  Bruhat order  $(W_{\scriptsize\mbox{af}}, \preccurlyeq)$
   if and only if  $y$ has an induced expression from a given reduced decomposition of $x$. That is,
     if    {\upshape $x=[\sigma_{\beta_1}\cdots\sigma_{\beta_r}]_{\scriptsize\mbox{red}}$}, then
              {\upshape $y= \sigma_{\beta_{k_1}}\cdots\sigma_{\beta_{k_s}}$} for some subsequence  $(k_1,\cdots, k_s)$.
     As shown in the next proposition, the Bruhat order among certain elements becomes quite simple. This result, which
      was   mentioned explicitly by Lusztig in section 2 of \cite{lus},    is proved by Stembridge as a special case of
 Theorem 4.10 of \cite{stem}.
\begin{prop}\label{stem1} For any $\lambda,\mu\in\tilde Q^\vee$,
           $t_\mu\preccurlyeq t_\lambda$ if and only if $\lambda\preccurlyeq\mu$; that is, $ \mu-\lambda=\sum\nolimits_{i\in I}a_i\alpha_i^\vee$\,\,
           $\mbox{with } a_i\geq 0                \mbox{ for each } i\in I.$
\end{prop}

\begin{cor}\label{sum1} Let {\upshape $t_\lambda,wt_\mu\in W_{\scriptsize\mbox{af}}^-$}. Then
            {\upshape $ wt_\mu\preccurlyeq t_\lambda$} if and only if {$ \lambda\preccurlyeq\mu.$}
\end{cor}
\begin{proof}
  We use induction on $\ell(w)$ and leave the details in section \ref{appelemmaproof}.
\end{proof}

\begin{lemma}\label{sum3}
Suppose $\lambda\in Q^\vee$ satisfies $\langle \lambda, \alpha_i\rangle\leq -2$ for each $i\in I$.
Then {\upshape $t_\lambda\in W_{\scriptsize\mbox{af}}^-$} admits
  a reduced decomposition of the form $t_\lambda=\omega_0\sigma_0u_1\sigma_0\cdots u_r\sigma_0$, where $u_j\in W \mbox{ for all } j.$
  That is, we can write $t_\lambda=[\sigma_{\beta_1}\cdots \sigma_{\beta_m}]_{\scriptsize\mbox{red}}$ such that $\beta_{\ell+1}=\beta_m=\alpha_0$
       and $\omega_0=[\sigma_{\beta_1}\cdots \sigma_{\beta_\ell}]_{\scriptsize\mbox{red}}$.
 Furthermore for any  $w\in W$, there is  a subsequence $1\leq i_1<\cdots<i_k\leq m$ such that
         $wt_\lambda=\sigma_{\beta_{i_1}}\cdots \sigma_{\beta_{i_k}}$, and
            any such a subsequence must satisfy $(i_{a+1}, \cdots, i_k)=(\ell+1,  \cdots, m)$ for some $0\leq a<k$.
\end{lemma}

Denote $[z]$ the coset $zW$ for  $z\in W_{\scriptsize\mbox{af}}$. Each coset $[z]$ contains a unique element   $m_{[z]}\in W_{\scriptsize\mbox{af}}^-$ of minimum length
   and a unique element of translation  $t_{[z]}\in Q^\vee$. Note that if
  $m_{[z]}=v_1t_{\lambda_1}$, then $t_{[z]}=t_{v_1(\lambda_1)}=v_1t_{\lambda_1}v_1^{-1}$.

\begin{defn}
   The length of a coset $[z]$ is defined to be  $\ell([z])=\ell(m_{[z]})$.
   Let {\upshape $x\in W_{\scriptsize\mbox{af}}^-$}.   We define {\upshape (i)}  $x\preccurlyeq [z]$ if $x\preccurlyeq m_{[z]}$
     and {\upshape (ii)} $[z]\preccurlyeq x$ if  $m_{[z]}\preccurlyeq x$.
\end{defn}

\begin{lemma}\label{sum88}
       Let {\upshape $x=wt_\lambda\in W_{\scriptsize\mbox{af}}^-$} and {\upshape $z\in W_{\scriptsize\mbox{af}}$}.  If  $x\preccurlyeq z$, then  $x\preccurlyeq [z]$.
      If {\upshape $y=ut_\mu\in W_{\scriptsize\mbox{af}}^-$},
            then  $ c_{x, [y]}=c_{x, y}'$,  whenever  either $\lambda=\mu$ with $\lambda$ regular or  $\ell(x)=\ell(y)+1$ holds.
\end{lemma}

For any $x, y\in W_{\scriptsize\mbox{af}}$, we denote
     $c_{x, y}'=c_{x, y}\big|_{\alpha_0=-\theta}$. To prove Proposition \ref{prop22}, we analyze the effective  summation first. In this case, it
        turns out that the summation for the product contains at most one nonzero term as follows.
  \begin{prop}\label{coe33} Let {\upshape $ wt_\lambda, t_\mu\!\in\! W_{\scriptsize\mbox{af}}^-$}.
       If  $\langle \mu, \alpha_i\rangle< -\ell(\omega_0)$  for all $i\in  I $, then
       $$\mathfrak{S}_{wt_\lambda}\mathfrak{S}_{t_\mu}=  c_{wt_\lambda, wt_\lambda}' c_{t_\mu, wt_\mu}' d_{wt_{\lambda+\mu}, [wt_{\lambda+\mu}w^{-1}]}\mathfrak{S}_{wt_{\lambda+\mu}}.$$
    \end{prop}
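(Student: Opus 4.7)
The plan is to begin from the general formula of Theorem \ref{str},
\[b_{wt_\lambda,\,t_\mu}^{\,z}\;=\;\sum_{\lambda_1,\mu_1\in Q^\vee} c_{wt_\lambda,[t_{\lambda_1}]}\,c_{t_\mu,[t_{\mu_1}]}\,d_{z,[t_{\lambda_1+\mu_1}]},\]
and to show that under the hypothesis $\langle\mu,\alpha_i\rangle<-\ell(\omega_0)$ for every $i\in I$ this triple sum collapses to a single surviving summand indexed by $\lambda_1=w\lambda$, $\mu_1=w\mu$ and $z=wt_{\lambda+\mu}$, yielding the stated formula via the identity $wt_{\lambda+\mu}w^{-1}=t_{w(\lambda+\mu)}$ used to rewrite the $d$-factor.

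The first step is structural. Since the hypothesis implies $\langle\mu,\alpha_i\rangle\le-2$, Lemma \ref{sum2} supplies the reduced decomposition $t_\mu=\omega_0\sigma_0 u_1\sigma_0\cdots u_r\sigma_0$, and Lemma \ref{sum3} then pins down every induced reduced decomposition of an element $w't_\mu\preccurlyeq t_\mu$ to be of the form $w'\omega_0\sigma_0 u_1\sigma_0\cdots u_r\sigma_0$. The same hypothesis also makes $\lambda+\mu$ sufficiently anti-dominant that $wt_{\lambda+\mu}\in W_{\scriptsize\mbox{af}}^-$ and $t_{w(\lambda+\mu)}$ admits an analogous reduced decomposition, so that the same subword machinery applies to the $d$-factor at the end.

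The heart of the argument is the analysis of the double sum
\[F(\nu)\;:=\;\sum_{\lambda_1+\mu_1=\nu}c_{wt_\lambda,[t_{\lambda_1}]}\,c_{t_\mu,[t_{\mu_1}]}.\]
A subtlety, visible already in a rank-one check, is that the individual coefficients $c_{t_\mu,[t_{\mu_1}]}$ can be nonzero for many $\mu_1$ outside the $W$-orbit of $\mu$, so the desired single-term conclusion comes not from term-by-term vanishing but from systematic cancellations among subword contributions in the definition of $c$. I expect to invoke the main combinatorial lemma Proposition \ref{sum55} of this section (whose elementary but lengthy proof is deferred to the appendix) to establish that $F(\nu)=\delta_{\nu,\,w(\lambda+\mu)}\,c_{wt_\lambda,wt_\lambda}'\,c_{t_\mu,wt_\mu}'$; equivalently, using the expansion $\mathfrak{S}_{wt_\lambda}\mathfrak{S}_{t_\mu}=\sum_\nu F(\nu)\,\psi_{t_\nu}$ obtained from Propositions \ref{basis3}(iii) and \ref{mul}, that this Pontryagin product is supported on a single translation class.

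Once the double sum has been collapsed, expand $\psi_{t_{w(\lambda+\mu)}}=\sum_y d_{y,[t_{w(\lambda+\mu)}]}\mathfrak{S}_y$ via Proposition \ref{basis3}(ii); the reduced decomposition provided by Lemma \ref{sum2} for $t_{\lambda+\mu}$ together with Lemma \ref{sum3} then singles out $y=wt_{\lambda+\mu}$ and yields the factor $d_{wt_{\lambda+\mu},[t_{w(\lambda+\mu)}]}$. The main obstacle I anticipate is precisely the cancellation step producing the identity for $F(\nu)$: one must pair up the subword contributions in a way that respects the substitution $\alpha_0=-\theta$, and the strict inequality $\langle\mu,\alpha_i\rangle<-\ell(\omega_0)$ (strictly stronger than what Lemma \ref{sum2} alone requires) is exactly what provides enough slack in the Bruhat interval below $t_\mu$ to set up this pairing uniformly in $\lambda$ and $w$.
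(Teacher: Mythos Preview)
Your proposal has a genuine gap in the way you decouple the argument. You propose to first show that
\[
F(\nu)=\sum_{\lambda_1+\mu_1=\nu}c_{wt_\lambda,[t_{\lambda_1}]}\,c_{t_\mu,[t_{\mu_1}]}
\]
is supported at the single point $\nu=w(\lambda+\mu)$, and then to show that $\psi_{t_{w(\lambda+\mu)}}$, when expanded in Schubert classes, is supported on the single class $\mathfrak{S}_{wt_{\lambda+\mu}}$. Both claims are false. For the second, $\psi_{t_{w(\lambda+\mu)}}=\sum_y d_{y,[t_{w(\lambda+\mu)}]}\mathfrak{S}_y$ has a nonzero term for every $y\in W_{\scriptsize\mbox{af}}^-$ with $y\preccurlyeq m_{[t_{w(\lambda+\mu)}]}$, and Lemma \ref{sum3} does not change that; it only describes the shape of induced reduced expressions. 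For the first, since the final answer is a scalar multiple of $\mathfrak{S}_{wt_{\lambda+\mu}}$, whose $\psi$-expansion (Proposition \ref{basis3}(iii)) has many nonzero terms, $F(\nu)$ cannot be a delta function. Proposition \ref{sum55} does not assert anything about $F(\nu)$; its hypotheses involve a triple $(t_1,t_2,z)$ satisfying simultaneously $x\succcurlyeq[t_1]$, $y\succcurlyeq[t_2]$, $z\preccurlyeq[t_1t_2]$, and $\ell(z)\ge\ell(x)+\ell(y)$.

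The paper's route keeps all three factors coupled. One works directly with $b_{x,y}^{\,z}=\sum_{t_1,t_2}c_{x,[t_1]}c_{y,[t_2]}d_{z,[t_1t_2]}$ and first invokes Corollary \ref{coe} (the polynomiality of $b_{x,y}^{\,z}$ forces $b_{x,y}^{\,z}=0$ unless $\ell(z)\ge\ell(x)+\ell(y)$). Then the nonvanishing constraints $c_{x,[t_1]}\neq0\Rightarrow x\succcurlyeq[t_1]$, $c_{y,[t_2]}\neq0\Rightarrow y\succcurlyeq[t_2]$, and $d_{z,[t_1t_2]}\neq0\Rightarrow z\preccurlyeq[t_1t_2]$ (via Lemma \ref{sum88}) put one exactly in the setting of Proposition \ref{sum55}, which forces $m_{[t_1]}=wt_\lambda$, $m_{[t_2]}=wt_\mu$, and $z=wt_{\lambda+\mu}$ all at once. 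Lemma \ref{coe2} then replaces $c_{x,[wt_\lambda]}$ and $c_{y,[wt_\mu]}$ by $c'_{wt_\lambda,wt_\lambda}$ and $c'_{t_\mu,wt_\mu}$. Lemmas \ref{sum2} and \ref{sum3} are not used at this stage; they enter only later (Section 4.4) in the explicit evaluation of the resulting single term. The essential ingredient you are missing is the degree constraint $\ell(z)\ge\ell(x)+\ell(y)$ from Corollary \ref{coe}, without which no collapse is possible.
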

  \begin{proof}
    By Theorem \ref{str}, $\mathfrak{S}_x\mathfrak{S}_y= \sum_z b_{x, y}^{\, z}\mathfrak{S}_z=\sum_z\big(\sum_{t_1, t_2}c_{x, [t_1]}c_{y, [t_2]}d_{z, [t_1t_2]}\big)\mathfrak{S}_z$.
   Note that    $c_{x, [t_1]}=0$ unless  $x\succcurlyeq [t_1]$; $c_{y, [t_2]}=0$ unless $y\succcurlyeq [t_2]$;
                         $d_{z, [t_1t_2]}=0$ unless $z\preccurlyeq t_1t_2$, which implies  $z\preccurlyeq [t_1t_2]$ by Lemma \ref{sum88}.
      Combining with  Corollary \ref{coe}, we conclude that
           the effective  summation for  $\mathfrak{S}_x\mathfrak{S}_y$ is only  over those $(z, t_1, t_2)$'s in $W_{\scriptsize\mbox{af}}^-\times Q^\vee\times Q^\vee$
       satisfying $\ell(z)\geq \ell(x)+\ell(y)$,
            $x\succcurlyeq [t_1]$,  $y\succcurlyeq [t_2]$ and $z\preccurlyeq t_1t_2$.
       Then we claim   $[t_1]=[wt_\lambda]$, $[t_2]=[wt_\mu]$ and $z=wt_{\lambda+\mu}$. Consequently, $[t_1t_2]=  [wt_{\lambda+\mu}w^{-1}]$. Thus the statement follows by
       using   Lemma \ref{sum88} again.

       It remains to show our claim. Indeed, we note  $\ell(z)\leq \ell([t_1t_2])$ and denote  $v_jt_{\lambda_j}=m_{[t_j]}$ for $j=1, 2$.
  Then  $v_2t_{\lambda_2}=m_{[t_2]}\preccurlyeq t_\mu$ and consequently   $\mu\preccurlyeq \lambda_2$ by Corollary \ref{sum1}.
   Therefore, $\lambda_2=\mu+\kappa$ with $\kappa=\sum_{i\in I}  c_i\alpha_i^\vee$, in which $c_i\geq 0$ for each $i\in I=\{1,\cdots, n\}$.
   If $\sum_{i\in I} 2c_i>\ell(\omega_0)$, then  $\ell(t_{\lambda_2})
           =\langle \mu+\kappa, -2\rho\rangle=
               \ell(t_\mu)-\sum_{i\in I} 2c_i<\ell(t_\mu)-\ell(\omega_0)$,
     which deduces a contradiction as follows.
                  \begin{align*} \ell(x)+\ell(y)\leq\ell([t_1t_2]) =\ell([v_1t_{\lambda_1}v_1^{-1}v_2t_{\lambda_2}])
                                                             &\leq \ell(v_1t_{\lambda_1}v_1^{-1}v_2t_{\lambda_2})\\
                                                             &\leq \ell(v_1t_{\lambda_1})+\ell(v_1^{-1}v_2)+\ell(t_{\lambda_2})\\
                                                             &\leq \ell(x)+\ell(\omega_0)+\ell(t_{\lambda_2})\\
                                                             &<\ell(x)+ \ell(t_{\mu})
                                                             =\ell(x)+ \ell(y).
             \end{align*}
   If $\sum_{i\in I} 2c_i\leq \ell(\omega_0)$, then  for each $j\in I$ one has
    $\langle \lambda_2, \alpha_j\rangle=\langle \mu+\sum_{i\in I}  c_i\alpha_i^\vee, \alpha_j\rangle < -\ell(\omega_0)+2c_j\leq 0$.
     Hence, $\lambda_2\in\tilde Q^\vee$ is regular. Therefore $v_1^{-1}v_2t_{\lambda_2}\in W_{\scriptsize\mbox{af}}^-$ and
         \begin{align*} \ell(x)+\ell(y)\leq \ell(z)\leq\ell([t_1t_2]) =\ell([v_1t_{\lambda_1}v_1^{-1}v_2t_{\lambda_2}])
                                                             &\leq \ell(v_1t_{\lambda_1}v_1^{-1}v_2t_{\lambda_2})\\
                                                             &\leq \ell(v_1t_{\lambda_1})+\ell(v_1^{-1}v_2t_{\lambda_2})\\
                                                             &\leq \ell(x)+\ell(t_{\lambda_2})-\ell(v_1^{-1}v_2)\\
                                                             &\leq \ell(x)+ \ell(t_{\mu})-0
                                                             =\ell(x)+ \ell(y).
         \end{align*}
    Hence,  all inequalities are indeed equalities. Thus
           $\ell(z)=\ell([t_1t_2])$,  $\ell(v_1^{-1}v_2)=0$, $v_1t_{\lambda_1}=x=wt_\lambda$ and  $\lambda_2=\mu.$ Hence,
             $z=m_{[t_1t_2]}=wt_{\lambda+\mu}$ and $v_1=v_2=w$.
  \end{proof}

  To prove Proposition \ref{prop22}, we need to compute the only coefficient as in the above proposition.
  The following lemma is useful for   calculations of  certain structure constants.
\begin{lemma}[Lemma 11.1.22 of \cite{kumar}]\label{coe55}
                   For  $v, u\in W$,   $\displaystyle {d_{v, u}\over  \prod_{\beta\in R^+}\beta}=        u\big(c_{v^{-1}\omega_0, u^{-1}\omega_0}\big).$
\end{lemma}
\begin{notation}\label{notation}    Suppose that  $\langle \lambda, \alpha_i\rangle\leq -2$  and $\langle \mu, \alpha_i\rangle\leq -2$  for all $i\in  I $.
  Let $m=\ell(t_\lambda)$ and $p=\ell(t_\mu)$. Because of  Lemma \ref{sum3}, we can take reduced expressions
   {\upshape $t_\lambda=[\sigma_{\beta_1} \cdots \sigma_{\beta_m}]_{\scriptsize\mbox{red}}$ and
     $t_\mu=[\sigma_{\beta_{m+1}}\cdots  \sigma_{\beta_{m+p}}]_{\scriptsize\mbox{red}}$ }  such that
      $\beta_{r+1}=\beta_{m+r+1}=\alpha_0$ and  {\upshape $\omega_0=[\sigma_{\beta_1}\cdots \sigma_{\beta_r}]_{\scriptsize\mbox{red}}
                  =[\sigma_{\beta_{m+1}}\cdots \sigma_{\beta_{m+r}}]_{\scriptsize\mbox{red}}$}, where $r=\ell(\omega_0) $.
        Note that  {\upshape $t_{\lambda+\mu}=[\sigma_{\beta_1}\sigma_{\beta_2}\cdots \sigma_{\beta_{m+p}}]_{\scriptsize\mbox{red}}$ }. Denote
       $H_i:=  \sigma_{\beta_1}\cdots \sigma_{\beta_{i-1}}(\beta_j)$ for $1\leq i\leq m+p$,  and  denote
       $\tilde H_j:=\sigma_{\beta_{m+1}}\cdots \sigma_{\beta_{m+j-1}}(\beta_{m+j})$ for $1\leq j\leq p$. Clearly, $H_{m+j}=t_\lambda(\tilde H_j)$.
\end{notation}

\begin{prop}\label{coe77}
  Suppose   $\langle \lambda, \alpha_i\rangle\leq -2$ for all $i\in  I $.  Using Notation \ref{notation}, we have
                      $\displaystyle  c_{vt_\lambda, ut_\lambda}={u(d_{v^{-1}, u^{-1}})\over \prod_{j=1}^mu(H_j)}$   for any $v, u\in W$.

\end{prop}

 \begin{proof}
 Due to Lemma \ref{sum3}, we can write  $vt_\lambda=[\sigma_{\beta_{i_1}}\cdots \sigma_{\beta_{i_k}}\sigma_{\beta_{r+1}}\cdots \sigma_{\beta_m}]_{\scriptsize\mbox{red}}$
                with $(i_1, \cdots, i_k)$  a subsequence of $(1,\cdots, r)$ and $v\omega_0=[\sigma_{\beta_{i_1}}\cdots \sigma_{\beta_{i_k}}]_{\scriptsize\mbox{red}}$.
        Furthermore if
          $ut_\lambda=\sigma_{\beta_{i_1}}^{\varepsilon_{i_1}}\cdots \sigma_{\beta_{i_k}}^{\varepsilon_{i_k}}\sigma_{\beta_{r+1}}^{\varepsilon_{r+1}}\cdots \sigma_{\beta_m}^{\varepsilon_{m}}$ for
                           $(\varepsilon_{i_1}, \cdots, \varepsilon_{i_k},\varepsilon_{r+1},\cdots, \varepsilon_m)$ $\in\{0, 1\}^{m-r+k}$,
            then   $\varepsilon_{r+1}=\cdots=\varepsilon_{m}=1$ and $\sigma_{\beta_{i_1}}^{\varepsilon_{i_1}}\cdots \sigma_{\beta_{i_k}}^{\varepsilon_{i_k}}=u\omega_0$.
               As a consequence, we have
                      $c_{vt_\lambda, ut_\lambda}=c_{v\omega_0, u\omega_0}\cdot {1\over \prod_{j=r+1}^mu(H_j)}$ by   definition.
                      Note that  $\{H_1, \cdots, H_r\}$ are all the positive roots which are mapped to negative roots under $\omega^{-1}_0$ (see e.g. Lemma \ref{rword}).
        Thus         $\prod_{i=1}^r H_i= \prod_{\beta\in R^+}\beta$.
                      Hence, the statement follows by          Lemma \ref{coe55}.
        \end{proof}
  \begin{lemma}\label{coe99}
     For any {\upshape $x, y\in W_{\scriptsize\mbox{af}}^-$} and any $w\in W$, one has $d_{x, yw}=d_{x, y}$.
\end{lemma}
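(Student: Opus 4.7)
The plan is to reduce to the case where $w$ is a simple reflection by induction on $\ell(w)$, and then compare the defining sums for $d_{x,y}$ and $d_{x, y\sigma_i}$ directly. Since $w \in W$, write any reduced decomposition $w = \sigma_{i_1} \cdots \sigma_{i_k}$ with each $i_j \in I$. By iterating, it suffices to show $d_{x, y\sigma_i} = d_{x, y}$ for every $i \in I$ and every $x \in W_{\scriptsize\mbox{af}}^-$, $y \in W_{\scriptsize\mbox{af}}$.

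I would split into two cases according to $\ell(y\sigma_i)$ versus $\ell(y)$. If $\ell(y\sigma_i) > \ell(y)$, pick a reduced decomposition $y = [\sigma_{\beta_1}\cdots\sigma_{\beta_m}]_{\scriptsize\mbox{red}}$ and extend it to $y\sigma_i = [\sigma_{\beta_1}\cdots\sigma_{\beta_m}\sigma_{\alpha_i}]_{\scriptsize\mbox{red}}$. By Proposition \ref{basis} the value $d_{x, y\sigma_i}$ may be computed using this reduced word. The sum splits into two groups: subsequences of $(1,\dots,m,m+1)$ that omit the final index, and those that include it. The first group reproduces $d_{x,y}$ exactly, because the roots $\gamma_k = \sigma_{\beta_1}\cdots\sigma_{\beta_{k-1}}(\beta_k)$ for $k\leq m$ are unchanged. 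So the task reduces to showing the second group contributes zero.

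Here is where the hypothesis $x \in W_{\scriptsize\mbox{af}}^-$ enters, and it is the only nontrivial step. A subsequence containing index $m+1$ that gives a reduced word for $x$ would furnish a reduced expression ending in $\sigma_{\alpha_i}$, forcing $\ell(x\sigma_i) = \ell(x) - 1$. But the definition of $W_{\scriptsize\mbox{af}}^- = W_{\scriptsize\mbox{af}}^\Delta$ says $\ell(x) \leq \ell(xv)$ for all $v \in W$; in particular $\ell(x\sigma_i) \geq \ell(x)$ (and hence $= \ell(x)+1$). This contradiction eliminates the second group and yields $d_{x, y\sigma_i} = d_{x, y}$.

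The other case $\ell(y\sigma_i) < \ell(y)$ is immediate by applying the case just treated to $y' := y\sigma_i$, giving $d_{x, y'\sigma_i} = d_{x, y'}$, i.e.\ $d_{x,y} = d_{x, y\sigma_i}$. The main obstacle is really just the observation in the previous paragraph; once that is in place, induction on $\ell(w)$ combined with the two-case dichotomy completes the proof.
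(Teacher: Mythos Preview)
Your proof is correct and rests on the same key observation as the paper's: a reduced expression for $x\in W_{\scriptsize\mbox{af}}^-$ cannot end in a simple reflection $\sigma_{\alpha_i}$ with $i\in I$, since that would force $\ell(x\sigma_i)<\ell(x)$, contradicting minimality of $x$ in its coset $xW$.

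The only difference is organizational. The paper exploits the hypothesis $y\in W_{\scriptsize\mbox{af}}^-$ directly: since $\ell(yw)=\ell(y)+\ell(w)$, concatenating reduced words for $y$ and $w$ gives a reduced word for $yw$, and one then argues in a single stroke that no reduced subword for $x$ can use \emph{any} letter from the $w$-tail (otherwise $x(\sigma_{J_2})^{-1}$ would be shorter than $x$ for some $\sigma_{J_2}\in W$). You instead strengthen the claim to arbitrary $y\in W_{\scriptsize\mbox{af}}$, reduce to $w=\sigma_i$ by induction on $\ell(w)$, and split into the two length cases. Your route buys a slightly more general statement at the cost of an extra induction; the paper's route is more direct but uses the $y\in W_{\scriptsize\mbox{af}}^-$ hypothesis up front. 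Either way, the substantive content is identical.
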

\bigskip

 \begin{proof}[Proof of Proposition \ref{prop22}] We first assume    $\langle\mu, \alpha_i\rangle<-\ell(\omega_0)$ for each $i\in I$.
   Note   $t_\lambda(\tilde H_j)\big|_{\alpha_0=-\theta}=\tilde H_j\big|_{\alpha_0=-\theta}$. By definition,
             $d_{wt_{\lambda+\mu},  wt_{\lambda+\mu}}=d_{wt_{\lambda}, wt_{\lambda}}  \prod_{j=1}^pwt_\lambda(\tilde H_j)$,
     $d_{1, w^{-1}}=1,$ and
       $c_{wt_\lambda, wt_\lambda}d_{wt_\lambda, wt_\lambda}=1$.
  \,
   Thus by Lemma \ref{coe99} and Proposition  \ref{coe77},
    \begin{align*} c_{wt_\lambda, wt_\lambda}' c_{t_\mu, wt_\mu}' d_{wt_{\lambda+\mu}, [wt_{\lambda+\mu}w^{-1}]}
                               =&  c_{wt_\lambda, wt_\lambda} \cdot {w(d_{1, w^{-1}})\over \prod_{j=1}^pw(\tilde H_j)}\cdot
                                           d_{wt_{\lambda  }, wt_{\lambda }}\prod_{j=1}^pwt_\lambda(\tilde H_j) \big|_{\alpha_0=-\theta}\\
                            = & { \prod_{j=1}^{p}wt_\lambda(\tilde H_j)\over  \prod_{j=1}^pw(\tilde H_j) } {\big|_{\scriptsize{\alpha_0=-\theta}}}=1.
                   \end{align*}
           Hence, we have  $\mathfrak{S}_{wt_\lambda}\mathfrak{S}_{t_\mu}=\mathfrak{S}_{wt_{\lambda+\mu}}$, by Proposition \ref{coe33}.

     In general, we take   $\kappa\in \tilde Q^\vee$ such that   $\langle \kappa, \alpha_i\rangle<- \ell(\omega_0)$ 
                     and    $\langle \kappa+\mu, \alpha_i\rangle<- \ell(\omega_0)$
               for each $i\in I$.  Denote $x=wt_\lambda, y=t_\mu$. Because of the associativity of the 
               product,
          $$\mathfrak{S}_{xt_{\mu+\kappa}}=\mathfrak{S}_x\mathfrak{S}_{t_{\mu+\kappa}}=\mathfrak{S}_x(\mathfrak{S}_{y}\mathfrak{S}_{t_\kappa})=
           (\mathfrak{S}_x\mathfrak{S}_{y})\mathfrak{S}_{t_\kappa}=\sum\nolimits_{z}b_{x, y}^{\, z}\mathfrak{S}_z\mathfrak{S}_{t_\kappa}=\sum\nolimits_{z}b_{x, y}^{\, z}\mathfrak{S}_{zt_\kappa}.$$
           Hence, $b_{x, y}^{\, z}=0$ if $zt_\kappa\neq xt_{\mu+\kappa}$;  $b_{x, y}^{\, z}=1$ if $zt_\kappa= xt_{\mu+\kappa}$, that is, $z=wt_{\lambda+\mu}$.
     \end{proof}

 With similar arguments, we obtain  the following two propositions.
 Recall that for $u\in W$,
  $$\Gamma_1=\{\gamma\in R^+ ~|~   \ell(u\sigma_\gamma)=\ell(u)+1\},
          \Gamma_2 =\{\gamma\in R^+ ~|~  \ell(u\sigma_\gamma)=\ell(u)+1-\langle\gamma^\vee, 2\rho\rangle\}.$$
 \begin{prop}\label{coe44}
  Let {\upshape $x, y\in W_{\scriptsize\mbox{af}}^-$} with $x=\sigma_it_\lambda$ and $y=ut_\mu$, where   $\sigma_i=\sigma_{\alpha_i}$
      for some $i\in I$.   Suppose   $\langle \lambda, \alpha_j\rangle< -\ell(\omega_0)$  and $\langle \mu, \alpha_j\rangle< -\ell(\omega_0)$  for all $j\in  I $,
      then
      $$\mathfrak{S}_x\mathfrak{S}_y=c_{x, ut_\lambda}' c_{y,y}' d_{ut_{\lambda+\mu}, [ut_{\lambda+\mu}u^{-1}]}\mathfrak{S}_{ut_{\lambda+\mu}}
                      \,+\! \sum_{\gamma\in\Gamma_1}A_\gamma\mathfrak{S}_{u\sigma_\gamma t_{\lambda+\mu}}
                      \,+\! \sum_{\gamma\in\Gamma_2}B_\gamma\mathfrak{S}_{u\sigma_\gamma t_{\lambda+\mu+\gamma^\vee}},  $$
 \begin{align*} \mbox{where}\,\,\, \,\,\,A_\gamma &=c_{x, ut_\lambda}' c_{y, y}' d_{u\sigma_\gamma t_{\lambda+\mu}, [ut_{\lambda+\mu}u^{-1}]}
                                \!\! + c_{x, u\sigma_\gamma t_\lambda}' c_{y, u\sigma_\gamma t_\mu}' d_{u\sigma_\gamma t_{\lambda+\mu},
                                                                   [u\sigma_\gamma t_{\lambda+\mu}\sigma_\gamma u^{-1}]},\\
                     B_\gamma &= c_{x, ut_\lambda}' c_{y, y}' d_{u\sigma_\gamma t_{\lambda+\mu+\gamma^\vee}, [ut_{\lambda+\mu}u^{-1}]}\,+\\
                          &\quad   \,   c_{x, u\sigma_\gamma t_\lambda}' c_{y, u\sigma_\gamma t_{\mu+\gamma^\vee}}' d_{u\sigma_\gamma t_{\lambda+\mu+\gamma^\vee},
                                                       [u\sigma_\gamma t_{\lambda+\mu+\gamma^\vee}\sigma_\gamma u^{-1}]}.
        \end{align*}
 \end{prop}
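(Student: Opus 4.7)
The approach mirrors the proof of Proposition \ref{coe33}, with additional casework reflecting that $x = \sigma_i t_\lambda$ sits one Bruhat step below the pure translation $t_\lambda$. The starting point is Theorem \ref{str}, which expands
$$b_{x,y}^{\,z} = \sum_{\lambda_1,\mu_1 \in Q^\vee} c_{x,[t_{\lambda_1}]}\,c_{y,[t_{\mu_1}]}\,d_{z,[t_{\lambda_1+\mu_1}]}.$$
I first prune the sum: each factor vanishes outside its natural Bruhat range, so a nonzero summand requires $[t_{\lambda_1}] \preccurlyeq x$, $[t_{\mu_1}] \preccurlyeq y$, and $z \preccurlyeq [t_{\lambda_1+\mu_1}]$ (invoking Lemma \ref{sum88} for the last). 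Combined with the length constraint $\ell(z) \geq \ell(x) + \ell(y) = \ell(ut_{\lambda+\mu}) - 1$ from Corollary \ref{coe}, this severely restricts the support of the summation.

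The heart of the argument is a classification of the remaining contributing triples $([t_1],[t_2],z)$. I would appeal to Propositions \ref{pos1} and \ref{pos2}, which are analogues of Proposition \ref{sum55} tailored to $x=\sigma_i t_\lambda$ and $y=ut_\mu$ and whose proofs I plan to defer to the appendix. Under the very anti-dominant hypotheses on $\lambda$ and $\mu$, this classification partitions the contributing triples into three families, matching the three families of $z$ in the statement: $z = ut_{\lambda+\mu}$, $z = u\sigma_\gamma t_{\lambda+\mu}$ for $\gamma \in \Gamma_1$, and $z = u\sigma_\gamma t_{\lambda+\mu+\gamma^\vee}$ for $\gamma \in \Gamma_2$. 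Within each family at most two choices of $([t_1],[t_2])$ contribute, giving the two summands that appear in each $A_\gamma$ and $B_\gamma$.

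For each such coset I would then collapse the coset-sum $c_{x,[t_1]}$ (respectively $c_{y,[t_2]}$) to a single evaluation $c_{x,z_1}'$ (respectively $c_{y,z_2}'$) via the Bruhat-uniqueness mechanism behind Lemma \ref{coe2}: the remaining coset representatives fail to lie Bruhat-below $x$ or $y$, a consequence of Corollary \ref{cor1} together with the regularity forced by the very anti-dominant hypotheses on $\lambda,\mu$. The $d$-factor survives as the coset evaluation $d_{z,[\,\cdot\,]}$ written in the statement. Assembling the surviving contributions across the three families yields the stated formula.

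I expect the classification (Propositions \ref{pos1} and \ref{pos2}) to be the main obstacle. Its core is a Bruhat analysis of the reduced decompositions of $\sigma_i t_\lambda$, built from $t_\lambda = \omega_0 \sigma_0 u_1 \sigma_0 \cdots u_r \sigma_0$ (Lemmas \ref{sum2} and \ref{sum3}) by peeling off a leading $\sigma_i$ from $\omega_0$. Tracking how reflections $\sigma_\gamma$ act on $u$ and interact with this affine string is exactly what produces the dichotomy between $\Gamma_1$ and $\Gamma_2$: $\gamma \in \Gamma_1$ records length-one covers that remain within the same translation coset, while $\gamma \in \Gamma_2$ records covers that must absorb the additional shift $\gamma^\vee$ into the translation component, accounting for the third family of $z$'s.
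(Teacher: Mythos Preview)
Your proposal is correct and matches the paper's proof essentially line for line: expand via Theorem~\ref{str}, prune using Corollary~\ref{coe} and Lemma~\ref{sum88}, then invoke Propositions~\ref{pos1} and~\ref{pos2} (deferred to the appendix) together with Lemma~\ref{coe2} to collapse each $c_{x,[\cdot]}$ to a single $c'$-evaluation. One minor remark on your final speculative paragraph: the paper's actual proofs of Propositions~\ref{pos1} and~\ref{pos2} in the appendix proceed by direct length-inequality chasing (bootstrapping off the inequalities in Lemma~\ref{sum66}) rather than by analyzing reduced decompositions via Lemmas~\ref{sum2} and~\ref{sum3}, so your anticipated mechanism for the $\Gamma_1/\Gamma_2$ dichotomy is a bit different from what the paper does, though the conclusions are the same.
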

\begin{proof}
  See the appendix (section \ref{appeproposproof}).
\end{proof}
\begin{prop}\label{coe88}
   Suppose that  $\langle \lambda, \alpha_j\rangle < -\ell(\omega_0)$  and $\langle \mu, \alpha_j\rangle < -\ell(\omega_0)$  for all $j\in  I .$ Let $u, w\in W$ and $\sigma_i=\sigma_{\alpha_i}$ with $i\in I$.
           Then we have  $ d_{\sigma_i, u}=w_i-u(w_i)$. Using Notation \ref{notation}, we have
            $w(d_{w^{-1}, w^{-1}})\cdot d_{wt_{\lambda+\mu}, wt_{\lambda+\mu}}=  \prod_{j=1}^{m+p}w(H_j)$. Furthermore,
     \begin{align*}(1)&\quad d_{u\sigma_\gamma t_{\lambda+\mu}, ut_{\lambda+\mu}}={1\over u(\gamma)} \cdot d_{u t_{\lambda+\mu}, ut_{\lambda+\mu}},  \mbox{ for any }\gamma\in \Gamma_1.\\
                   (2)&\quad d_{u\sigma_\gamma t_{\lambda+\mu+\gamma^\vee}, ut_{\lambda+\mu}}={1\over u(\gamma+\delta)}\cdot d_{u t_{\lambda+\mu}, ut_{\lambda+\mu}}, \mbox{ for any }\gamma\in \Gamma_2.\\
                   (3)&\quad  c_{ut_{\mu}, u\sigma_\gamma t_{\mu+\gamma^\vee}}  d_{u\sigma_\gamma t_{\lambda+\mu+\gamma^\vee},
                                  u\sigma_\gamma t_{\lambda+\mu+\gamma^\vee}}=-{\prod_{j=1}^m u\sigma_\gamma t_{\mu+\gamma^\vee}(H_j)\over u(\gamma+\delta)}, \mbox{ for any }\gamma\in \Gamma_2 .
      \end{align*}
\end{prop}
\begin{proof}
  See the appendix (section \ref{appeproposproof}).
\end{proof}

\bigskip

\begin{proof}[Proof of Proposition \ref{prop33}]
 Denote $x=\sigma_it_\lambda$ and $y=ut_\mu$.
We first assume $\langle\lambda,\alpha_k\rangle<-\ell(\omega_0)$ and
 $\langle\mu,\alpha_k\rangle<-\ell(\omega_0)$ and for each $k\in I$.
    Note that  $t_\lambda(\tilde H_j)\big|_{\alpha_0=-\theta}=\tilde H_j\big|_{\alpha_0=-\theta}$. It follows from Lemma \ref{coe99},
     Proposition  \ref{coe77} and Proposition  \ref{coe88} that
   \begin{align*}
             c_{x, ut_\lambda}'  c_{y,y}'  d_{ut_{\lambda+\mu}, [ut_{\lambda+\mu}u^{-1}]}
          &= {u(d_{\sigma_i, u^{-1}})\over \prod_{j=1}^mu(H_j)}\cdot {u(d_{u^{-1}, u^{-1}})\over \prod_{j=1}^pu(\tilde H_j)}
                                       \cdot    d_{ut_{\lambda}, ut_{\lambda}}\prod_{j=1}^put_\lambda(\tilde H_j)\big|_{\alpha_0=-\theta}\\
          &= {u(w_i- u^{-1}(w_i))\over \prod_{j=1}^mu(H_j)}\cdot {\prod_{j=1}^mu( H_j) \over \prod_{j=1}^pu(\tilde H_j)}
                                       \cdot     \prod_{j=1}^pu(\tilde H_j)\big|_{\alpha_0=-\theta}\\
          &=u(w_i)-w_i.
   \end{align*}

   For $\gamma\in \Gamma_1$, one has
 \begin{align*}
   &\,\,\, c_{x, ut_\lambda}'  c_{y, y}'  d_{u\sigma_\gamma t_{\lambda+\mu}, [ut_{\lambda+\mu}u^{-1}]}
                                 + c_{x, u\sigma_\gamma t_\lambda}'  c_{y, u\sigma_\gamma t_\mu}'  d_{u\sigma_\gamma t_{\lambda+\mu},
                                                                   [u\sigma_\gamma t_{\lambda+\mu}\sigma_\gamma u^{-1}]}\\
  =&c_{x, ut_\lambda}  c_{y, y}  {d_{ut_{\lambda+\mu}, ut_{\lambda+\mu} }\over u(\gamma)}+
                              {u\sigma_\gamma(d_{\sigma_i, \sigma_\gamma u^{-1}})\over \prod_{j=1}^mu\sigma_\gamma(H_j)}
                           \cdot  {u\sigma_\gamma(d_{u^{-1}, \sigma_\gamma u^{-1}})\over \prod_{j=1}^pu\sigma_\gamma(\tilde H_j)}
                            d_{u\sigma_\gamma t_{\lambda+\mu}, u\sigma_\gamma t_{\lambda+\mu} }\big|_{\alpha_0=-\theta}\\
    =&  { u(w_i)-w_i\over u(\gamma)}+
                              {u\sigma_\gamma(w_i)-w_i\over \prod_{j=1}^{m+p}u\sigma_\gamma(H_j)}
                           \cdot  u\sigma_\gamma({d_{\sigma_\gamma u^{-1}, \sigma_\gamma u^{-1}}\over \  \gamma})
                            d_{u\sigma_\gamma t_{\lambda+\mu}, u\sigma_\gamma t_{\lambda+\mu} }\big|_{\alpha_0=-\theta}\\
     =&  { u(w_i)-w_i\over u(\gamma)}-
                              {u\sigma_\gamma(w_i)-w_i\over \prod_{j=1}^{m+p}u\sigma_\gamma(H_j)}
                           \cdot  {\prod_{j=1}^{m+p}u\sigma_\gamma(H_j)\over  u(\gamma)}\big|_{\alpha_0=-\theta}\\
       =&  { u(w_i -\sigma_\gamma(w_i))\over    u(\gamma)}=\langle\gamma^\vee, w_i\rangle.
 \end{align*}
 For $\gamma\in \Gamma_2$, one has
\begin{align*}
   &\,\,  c_{x, ut_\lambda}'  c_{y, y}'  d_{u\sigma_\gamma t_{\lambda+\mu+\gamma^\vee}, [ut_{\lambda+\mu}u^{-1}]}
                                 + c_{x, u\sigma_\gamma t_\lambda}'  c_{y, u\sigma_\gamma t_{\mu+\gamma^\vee}}'  d_{u\sigma_\gamma t_{\lambda+\mu+\gamma^\vee},
                                                                   [u\sigma_\gamma t_{\lambda+\mu+\gamma^\vee}\sigma_\gamma u^{-1}]}\\
  =&c_{x, ut_\lambda}  c_{y, y}  {d_{ut_{\lambda+\mu}, ut_{\lambda+\mu} }\over u(\gamma+\delta)}+
                              {u\sigma_\gamma(d_{\sigma_i, \sigma_\gamma u^{-1}})\over \prod_{j=1}^mu\sigma_\gamma(H_j)}
                           \cdot  {-\prod_{j=1}^m u\sigma_\gamma t_{\mu+\gamma^\vee}(H_j)\over u(\gamma+\delta)}\big|_{\alpha_0=-\theta}\\
        =&  { u(w_i)-w_i\over u(\gamma)}-
                              {u\sigma_\gamma(w_i)-w_i\over \prod_{j=1}^{m }u\sigma_\gamma(H_j)}
                           \cdot  {\prod_{j=1}^{m }u\sigma_\gamma(H_j)\over  u(\gamma)} \big|_{\alpha_0=-\theta}\\
         =&  { u(w_i -\sigma_\gamma(w_i))\over    u(\gamma)}=\langle\gamma^\vee, w_i\rangle.
 \end{align*}
 Hence, the statement holds.

 For general cases, the statement follows from Proposition \ref{prop22} and the associativity and the commutativity of the Pontryagin product.
\end{proof}

\subsection{Equivalence between $QH^*_T(G/B)$ and $H_*^T(\Omega K)$}
In his lecture notes \cite{peterson} D. Peterson stated
that there is an isomorphism between the torus-equivariant quantum cohomology of $G/B$ and  the torus-equivariant
 homology of $\Omega K$ after localization. However, the proofs in these notes
are incomplete, and in \cite{lamshi}, Lam and Shimozono proved this result, using some of
Peterson's original approach together with   Mihalcea's criterion.
     For completeness, we  describe the literature of this equivalence.

The $T$-equivariant quantum cohomology $QH^*_T(G/B)$ is a torus-equivariant extension of the quantum cohomology ring $QH^*(G/B)$. (See section \ref{appendequiquantum} for more
 details.)
 It is      a commutative ring and   has an $S[\mathbf{q}]$-basis of Schubert classes $\sigma^u$'s
     with $S[\mathbf{q}]=\mathbb{Q}[\alpha_1, \cdots, \alpha_n, q_1, \cdots, q_n]$.
      $$\sigma^u\star_T \sigma^v = \sum_{w\in W, \lambda\in Q^\vee}     \tilde N_{u,v}^{w, \lambda} q_{\lambda}\sigma^w,\quad \mbox{where }    \tilde N_{u,v}^{w, \lambda}= \tilde N_{u,v}^{w, \lambda}(\alpha)\in S=\mathbb{Q}[\alpha_1,\cdots,\alpha_n].$$
 When $\lambda=0$, $\tilde N_{u,v}^{w, \lambda}$ is equivalent to
   the corresponding equivariant Schubert structure constant. The evaluation $\tilde N_{u,v}^{w, \lambda}|_{\alpha_1=\cdots=\alpha_n=0}$ equals the quantum
  Schubert structure constant $N_{u,v}^{w, \lambda}$.
 A direct calculation of a general  $N_{u,v}^{w, \lambda}$ can be rather  difficult. However if  $v$ is a simple reflection, then
  the following  equivariant quantum  Chevalley formula holds, which was originally stated by Peterson in \cite{peterson} and has
  been proved by Mihalcea in \cite{mih}.
 \begin{prop}[Equivariant quantum Chevalley formula]\label{quch}
  Let $u\in W$ and $s_i=\sigma_{\alpha_i}$ with $i\in I$. Then  in $QH_T^*(G/B)$ one has
    $$\sigma^{s_i}\star_T \sigma^u=(w_i-u(w_i))\sigma^u+ \sum_{\gamma\in \Gamma_1}\langle  \gamma^\vee, w_i\rangle \sigma^{u\sigma_\gamma}+
             \sum_{\gamma\in \Gamma_2}\langle \gamma^\vee, w_i\rangle q_{\gamma^\vee}\sigma^{u\sigma_\gamma},
              $$
  where  $\Gamma_1=\{\gamma\in R^+ ~|~   \ell(u\sigma_\gamma)=\ell(u)+1\}$ and        $\Gamma_2=\{\gamma\in R^+ ~|~  \ell(u\sigma_\gamma)=\ell(u)+1-\langle\gamma^\vee, 2\rho\rangle\}$.

\end{prop}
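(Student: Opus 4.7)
My plan is to prove the formula by direct computation of the equivariant Gromov--Witten invariants $\tilde N_{s_i,u}^{w,\lambda}=I^T_{0,3,\lambda}(\sigma^{s_i},\sigma^u,\sigma^{\omega_0 w})$, splitting into the classical term $\lambda=0$ and the genuinely quantum contributions $\lambda\succ 0$. For $\lambda=0$, the invariant is just the equivariant triple intersection on $G/B$, and the classical equivariant Chevalley (or Monk--Chevalley) formula of Kostant--Kumar produces exactly the first two summands $(w_i-u(w_i))\sigma^u$ and $\sum_{\gamma\in\Gamma_1}\langle\gamma^\vee,w_i\rangle\sigma^{u\sigma_\gamma}$. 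This piece can be obtained either from the nil-Hecke action on $H^*_T(G/B)$ or, more geometrically, from $T$-localization combined with Billey's formula applied to the torus-fixed points.

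For $\lambda\succ 0$ I would invoke the equivariant divisor axiom: since $\sigma^{s_i}$ is the Poincar\'e dual of the line bundle $\mathcal{L}_{w_i}$, we have
\[
I^T_{0,3,\lambda}(\sigma^{s_i},\sigma^u,\sigma^{\omega_0 w})=\langle\lambda,w_i\rangle\,I^T_{0,2,\lambda}(\sigma^u,\sigma^{\omega_0 w})\qquad (\lambda\neq 0),
\]
which already produces the coefficient $\langle\gamma^\vee,w_i\rangle$ whenever the relevant two-pointed invariant is nonzero and $\lambda=\gamma^\vee$. The next step is to show that the only $\lambda$ that contribute are $\lambda=\gamma^\vee$ for a positive real coroot, and that for such a $\gamma$
\[
I^T_{0,2,\gamma^\vee}(\sigma^u,\sigma^{\omega_0 w})=\delta_{w,\,u\sigma_\gamma}\qquad\text{with }\gamma\in\Gamma_2.
\]
The first assertion rests on the Fulton--Woodward theorem that nonzero GW invariants on $G/B$ are supported on classes $\gamma^\vee$. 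The second I would establish by the quantum-to-classical principle: a degree-$\gamma^\vee$ rational curve through a generic point of the Schubert variety $X_u$ and the opposite Schubert variety $X^{\omega_0 w}$ exists (generically uniquely and transversally) precisely when $w=u\sigma_\gamma$ and the virtual dimension $\dim X_u+\dim X^{\omega_0 w}-\dim G/B-\langle\gamma^\vee,2\rho\rangle+1$ is zero, which amounts to the length relation $\ell(u\sigma_\gamma)=\ell(u)+1-\langle\gamma^\vee,2\rho\rangle$, i.e.\ $\gamma\in\Gamma_2$; in the equivariant setting the torus weights on the obstruction bundle cancel along the $T$-fixed curve, so no additional equivariant correction is produced.

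Combining these computations, the quantum sum becomes
\[
\sum_{\gamma\succ 0\,,\,\gamma\in\Gamma_2}\!\!\langle\gamma^\vee,w_i\rangle\,q_{\gamma^\vee}\,\sigma^{u\sigma_\gamma},
\]
which is precisely the third term of the stated formula, and adding the classical part completes the argument. The hardest step will be the rigorous verification that the two-pointed invariants $I^T_{0,2,\gamma^\vee}(\sigma^u,\sigma^{\omega_0 w})$ are really delta-functions supported on $\gamma\in\Gamma_2$ with value $1$: a naive dimension count leaves room for higher-degree residual contributions or for equivariant corrections on the moduli space. To circumvent the most delicate part of the moduli-theoretic analysis, one can follow Mihalcea's bootstrap: assume the formula and, using the associativity and commutativity of $\star_T$ together with the classical $\lambda=0$ case already proved, verify by induction on $\ell(u)+\langle\lambda,2\rho\rangle$ that any structure constant $\tilde N^{w,\lambda}_{u,v}$ produced by the claimed $\sigma^{s_i}\star\sigma^u$ agrees with the GW invariant; the known recursive properties of GW invariants (divisor axiom, WDVV, and vanishing for $\lambda\not\succcurlyeq 0$) then pin down the formula uniquely.
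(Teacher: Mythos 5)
The paper does not prove this proposition: it is cited as a result of Peterson (stated) and Mihalcea (proved in \cite{mih}), and is then used as an input to Mihalcea's criterion (Proposition~\ref{crit}) in the proof of Theorem~\ref{th11}. So there is no internal proof to compare against; what one can assess is whether the sketch you give would stand on its own.

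There are genuine gaps. First, the form of the "equivariant divisor axiom'' you use,
$I^T_{0,3,\lambda}(\sigma^{s_i},\sigma^u,\sigma^{\omega_0 w})=\langle\lambda,w_i\rangle\,I^T_{0,2,\lambda}(\sigma^u,\sigma^{\omega_0 w})$ for $\lambda\neq 0$,
requires justification in the equivariant setting: $\sigma^{s_i}\in H^2_T(G/B)$ is not a pure geometric divisor class but carries a $H^*_T(\mathrm{pt})$-correction (this is exactly what produces the classical term $w_i-u(w_i)$), and one must argue that the correction term contributes nothing when $\lambda\neq 0$ via the equivariant fundamental class axiom. Second, the computation of the equivariant two-pointed invariant as $\delta_{w,\,u\sigma_\gamma}$ for $\gamma\in\Gamma_2$ and its vanishing otherwise (together with the restriction to $\lambda=\gamma^\vee$ for $\gamma$ a positive root) is non-trivial; you acknowledge this is the hardest step, but as written it amounts to restating the theorem rather than proving it, and the Fulton--Woodward result you invoke is non-equivariant and needs to be lifted. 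Third, and most seriously, your fallback "bootstrap" is circular: Mihalcea's criterion (Proposition~\ref{crit}) already takes the equivariant quantum Chevalley formula as one of its hypotheses (assumption (5)); it characterizes $QH^*_T(G/B)$ \emph{given} the Chevalley formula, so it cannot be used to deduce the Chevalley formula. Mihalcea's actual proof in \cite{mih} proceeds differently, building on the classical equivariant Chevalley formula, the non-equivariant quantum Chevalley formula of Fulton--Woodward, and vanishing and degree considerations for equivariant Gromov--Witten invariants, rather than by an a priori appeal to his own uniqueness criterion.
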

By evaluating  at $w_i=0$, the quantum Chevalley formula (see \cite{fw}) is recovered.

Furthermore, the equivariant quantum Chevalley   formula completely determines the multiplication in $QH^*_T(G/B)$. That is   the following
          Mihalcea's criterion,  a special case $(P=B)$ of which is stated here only.

 \begin{prop}[Mihalcea's criterion; see Theorem 2 of \cite{mih}]\label{crit} $\mbox{}$

 \noindent Denote $\mathbb{Q}[\mathbf{\alpha}, \mathbf{q}]=\mathbb{Q}[\alpha_1,\cdots, \alpha_n, q_1,\cdots, q_n]$ and
          $\mathbb{Q}[\mathbf{\alpha}^{\pm1}, \mathbf{q}]=\mathbb{Q}[\alpha_1^{\pm1},  \cdots ,   \alpha_n^{\pm1},q_1,\cdots, q_n]$.
           Let $\mathcal{A}=\bigoplus_{u\in W}{\mathbb{Q}[\mathbf{\alpha}^{\pm1}, \mathbf{q}]}\sigma^u$ be any $\mathbb{Q}[\mathbf{\alpha}^{\pm1}, \mathbf{q}]$-algebra with the
            product written as $\sigma^u*\sigma^v=\sum_{w, \lambda}C_{u, v}^{w, \lambda}q_\lambda\sigma^w$   where $\lambda\succcurlyeq 0$.
        Suppose the structure coefficients $C_{u, v}^{w, \lambda}$ satisfy the following
    \begin{enumerate}
      \item (homogeneity) $C_{u, v}^{w, \lambda}\in \mathbb{Q}[\mathbf{\alpha}^{\pm1}]$ is a homogeneous rational function of degree
                                    $$\deg(C_{u, v}^{w, \lambda})=\ell(u)+\ell(v)-\ell(w)-\langle \lambda, 2\rho\rangle, \mbox{ whenever } C_{u, v}^{w, \lambda}\neq 0;$$
      \item (multiplication by unit)\quad  $C_{\scriptsize\mbox{id}, v}^{w, \lambda}=\begin{cases}1, &\mbox{ if } \lambda=0 \mbox{ and } w=v\\ 0, &\mbox{ otherwise}\end{cases}$;
      \item (commutativity)    \quad  $\sigma^u*\sigma^v=\sigma^v*\sigma^u$ for any $u, v\in W$;
      \item (associativity)  $(\sigma^{s_i} *\sigma^u )*\sigma^v=\sigma^{s_i}*(\sigma^u*\sigma^v)$ for any $u, v\in W$ and any simple reflection $s_i\in W$;

      \item (equivariant quantum Chevalley formula) For any  $u\in W$ and any simple reflection $s_i\in W$, the product of $\sigma^{s_i}*\sigma^u$ is given by
       $$\sigma^{s_i}* \sigma^u=(w_i-u(w_i))\sigma^u+ \sum_{\gamma\in \Gamma_1}\langle  \gamma^\vee, w_i\rangle \sigma^{u\sigma_\gamma}+
             \sum_{\gamma\in \Gamma_2}\langle \gamma^\vee, w_i\rangle q_{\gamma^\vee}\sigma^{u\sigma_\gamma}.
              $$

      \end{enumerate}
  Then for any $u, v, w, \lambda$, we have
                $$C_{u, v}^{w, \lambda}=\tilde N_{u, v}^{w, \lambda}.$$
    In particular,  $C_{u, v}^{w, \lambda}=0$ if $\deg(C_{u, v}^{w, \lambda})<0$,
          and $(\bigoplus\limits_{u\in W}{\mathbb{Q}[\mathbf{\alpha}, \mathbf{q}]}\sigma^u, *)$ is canonically isomorphic to
            $QH^*_T(G/B)$ as $\mathbb{Q}[\mathbf{\alpha}, \mathbf{q}]$-algebras.

 \end{prop}
 \begin{remark}
    As  shown in \cite{mi2}, the equivariant Schubert structure constants  are in fact   nonnegative combinations
         of monomials in the negative simple roots. Therefore, Mihalcea chose
                               the negative simple roots instead of the positive ones
                    for positivity reasons. For the same reason, we define the new algebra $(H^T_*(\Omega K), \bullet)$ below. As a consequence,
           the    canonical isomorphism   after localization  between $(H_*^T(\Omega K), \cdot)$ and $QH^*_T(G/B)$ 
           looks even more natural.
 \end{remark}

Define a new  product $\bullet$
           on $H^T_*(\Omega K)$ as follows.
    $$\mathfrak{S}_x\bullet\mathfrak{S}_y=\sum\nolimits_{z\in W_{\scriptsize\mbox{af}}^-}\tilde b_{x, y}^{\, z}\mathfrak{S}_z, \,\,\,\mbox{where }\,\,\, \tilde b_{x, y}^{\, z}=
                 (-1)^{\ell(z)-\ell(x)-\ell(y)}b_{x, y}^{\, z} .$$
    Note that $b_{x, y}^{\, z}$ is a homogeneous   polynomial of degree $\!\ell(z)-\!\ell(x)\!-\ell(y).$
   $\mbox{}$ Thus $(  H_*^T(\Omega K), \bullet)$ is canonically isomorphic to $(H_*^T(\Omega K), \cdot)$ as $S$-algebras. Immediately, it follows from
    the definition of $\bullet$ and Proposition \ref{prop22}
           that
          $\mathfrak{S}_x\bullet\mathfrak{S}_{t_\mu}=\mathfrak{S}_{xt_\mu}$ for any $x, t_\mu\in W_{\scriptsize\mbox{af}}^-$. As a consequence,
           $\{\mathfrak{S}_{t}~|~t\in \tilde Q^\vee\}$ is a multiplicatively  closed set without zero divisors.
          It is  easy to show that     the  following map  $\varphi$    is an $S$-module isomorphism.
            \begin{align*}\varphi:&\,\,\,  H^T_*(\Omega K)[\mathfrak{S}^{-1}_t~|~t\in\tilde Q^\vee]\longrightarrow QH^*_T(G/B)[\mathbf{q}^{-1}];\\
                                  &\qquad\qquad \mathfrak{S}_{wt_\lambda}\bullet\mathfrak{S}_{t_\mu}^{-1}\longmapsto q_{\lambda-\mu}\sigma^w,
            \end{align*}
     where  $QH^*_T(G/B)[\mathbf{q}^{-1}]= QH^*_T(G/B)[q_i^{-1}|i\in I]$.
      As a consequence, the algebra
         $H^T_*(\Omega K)[\mathfrak{S}^{-1}_t~|~t\in\tilde Q^\vee]$ has an $S$-basis
           $\{\varphi^{-1}(q_\lambda\sigma^w)~|~ \lambda\in Q^\vee, w\in W\}$. Therefore
                    for any $A, B\in   H^T_*(\Omega K)[\mathfrak{S}^{-1}_t~|~t\in\tilde Q^\vee], A\bullet B=\sum_{w, \lambda}C_{A, B}^{w, \lambda}\varphi^{-1}(q_\lambda\sigma^w)$.
      Thanks to Mihalcea's criterion, it becomes a routine to give a proof of Peterson's Theorem as below, as was done by Lam and Shimozono \cite{lamshi}.

  \begin{thm}\label{th11}
     \begin{enumerate}
        \item[\upshape (i)] 
        $\varphi:  H^T_*(\Omega K)[\mathfrak{S}^{-1}_t~|~t\in\tilde Q^\vee]\longrightarrow QH^*_T(G/B)[\mathbf{q}^{-1}]$ is an $S$-algebra isomorphism.
         \item[\upshape (ii)] Let $u, v, w\in W$ and $\lambda\in Q^\vee$. Take $\eta, \kappa, \mu\in \tilde Q^\vee$ such that
     $x=ut_\eta, y=vt_\kappa, z=wt_\mu$ lie in {\upshape
                $  W_{\scriptsize\mbox{af}}^- $}  and $\lambda=\mu-\eta-\kappa$.  Then we have
            $$\tilde N_{u, v}^{w, \lambda}=\tilde b_{x, y}^{\, z}. $$
     \end{enumerate}
  \end{thm}

\begin{remark}
   The assumption ``$\lambda\succcurlyeq 0$" in Mihalcea's criterion    becomes obvious by using Proposition \ref{lemm22}.
\end{remark}
 In section \ref{seccalcustr}, we have given  elementary proofs of    Proposition \ref{prop22} and Proposition \ref{prop33}.
 Since both of these two propositions  play  very important roles in the proof made by Lam and Shimozono
  besides Mihalcea's criterion, we give an alternative  proof of Peterson's Theorem  in this sense.

\subsection{Proof of Theorem \ref{th22}}\label{secproofofmain}
Denote by $v_it_{\lambda_i}=m_{[t_i]}\in W_{\scriptsize\mbox{af}}^-$ the
  minimal length representative  in the coset $[t_i]=t_iW$ as before, where $t_i\in Q^\vee, i=1, 2$. Note that for any $x\in W_{\scriptsize\mbox{af}}^-$ one has
  that  $c_{x, [t_i]}=c_{x, [v_it_{\lambda_i}]}$ by definition, and that $d_{x, t_1t_2}=d_{x, v_1t_{\lambda_1}v_1^{-1}v_2t_{\lambda_2}v_2^{-1}}
             = d_{x, v_2t_{v_2^{-1}v_1(\lambda_1)+\lambda_2}}$ following from Lemma \ref{coe99}. Therefore  for any {\upshape $x, y, z\in W_{\scriptsize\mbox{af}}^-$},

             $$b_{x,y}^{\,z}=\sum_{t_1, t_2\in Q^\vee}\!c_{x, [t_1]}c_{y, [t_2]}d_{z, [t_1t_2]}=
                            \sum_{v_1t_{\lambda_1}, v_2t_{\lambda_2}\in W_{\scriptsize\mbox{af}}^-}
                           \!\!\!\!\! \!c_{x, [v_1t_{\lambda_1}]}c_{y, [v_2t_{\lambda_2}]}d_{z, [v_2t_{v_2^{-1}v_1(\lambda_1)+\lambda_2}]} .$$

  The following lemma is contained in Lemma 13.2.A of \cite{hum}.
  \begin{lemma}\label{dom} For any $\lambda\in Q^\vee$, there exists a unique $\lambda'\in \tilde Q^\vee$ and some $w\in W$ such that $\lambda'=w(\lambda)$.
   Furthermore, $\lambda'\preccurlyeq \lambda$.
  \end{lemma}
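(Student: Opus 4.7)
\textbf{Plan for the proof of Lemma \ref{dom}.} The statement decomposes into three claims: existence of some $\lambda'\in\tilde Q^\vee$ in the orbit $W\cdot\lambda$, the inequality $\lambda'\preccurlyeq\lambda$ in the root order on $Q^\vee$, and uniqueness of $\lambda'$. Since this is a well-known Weyl-group fact (the authors already cite Humphreys), the role of a proof here is just to make the bound $\lambda'\preccurlyeq\lambda$ — which is the piece actually used later — transparent. I would argue directly via the classical descent algorithm to the anti-dominant chamber.

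For existence together with the inequality, the plan is to induct on $\langle\lambda,2\rho\rangle$. If $\lambda\in\tilde Q^\vee$ take $\lambda'=\lambda$; otherwise pick $i\in I$ with $\langle\lambda,\alpha_i\rangle>0$ and replace $\lambda$ by
\[
\sigma_i(\lambda)=\lambda-\langle\lambda,\alpha_i\rangle\alpha_i^\vee\in W\cdot\lambda.
\]
Two observations drive the induction: first, $\langle\sigma_i(\lambda),2\rho\rangle=\langle\lambda,2\rho\rangle-2\langle\lambda,\alpha_i\rangle<\langle\lambda,2\rho\rangle$, so the recursion terminates at some $\lambda'\in\tilde Q^\vee\cap W\cdot\lambda$; second, because $\langle\lambda,\alpha_i\rangle$ is a positive integer, the descent step is $\sigma_i(\lambda)=\lambda-(\text{positive integer})\cdot\alpha_i^\vee\preccurlyeq\lambda$ in the root order on $Q^\vee$ of Proposition \ref{stem1}. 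Transitivity then gives $\lambda'\preccurlyeq\lambda$.

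For uniqueness I would invoke the classical theorem that the closed anti-dominant Weyl chamber
\[
\overline{C^-}=\{\mu\in\mathfrak{h}_{\mathbb{R}}~|~\langle\mu,\alpha_i\rangle\leq 0,\,\,\forall i\in I\}
\]
is a (strict) fundamental domain for the $W$-action on $\mathfrak{h}_{\mathbb{R}}$ (see Humphreys, \S1.12 / 10.3). Since $\tilde Q^\vee=\overline{C^-}\cap Q^\vee$, any two elements of $W\cdot\lambda$ lying in $\tilde Q^\vee$ must coincide, giving uniqueness of $\lambda'$ (though of course not of $w$, as stabilizers in $W$ need not be trivial).

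The proof presents no genuine obstacle; the one caveat worth flagging is that the symbol $\preccurlyeq$ is being used here in the sense of the root order on $Q^\vee$ (namely $\lambda'\preccurlyeq\lambda \iff \lambda-\lambda'\in\bigoplus_{i\in I}\mathbb{Z}_{\geq 0}\alpha_i^\vee$, as specified in Proposition \ref{stem1}), rather than the Bruhat order on $W_{\scriptsize\mbox{af}}$; once this is fixed, the single-step estimate $\sigma_i(\lambda)\preccurlyeq\lambda$ is immediate and the whole argument reduces to a few lines.
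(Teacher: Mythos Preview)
Your proposal is correct. The paper does not actually prove this lemma at all: it simply records it as ``contained in Lemma 13.2.A of \cite{hum}'' and moves on. Your descent argument (apply $\sigma_i$ whenever $\langle\lambda,\alpha_i\rangle>0$, track that each step subtracts a positive multiple of $\alpha_i^\vee$, and use the fundamental-domain theorem for uniqueness) is exactly the standard proof one finds in Humphreys, so you have supplied what the paper outsources. The only minor remark is that for termination you might say explicitly that the orbit $W\cdot\lambda$ is finite (or that $\langle\cdot,2\rho\rangle$ is bounded below on it), rather than leaving it implicit in ``the recursion terminates''; otherwise nothing is missing.
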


 \begin{lemma}\label{lemm}
    Let  $\lambda_1, \lambda_2\in \tilde Q^\vee$ with $\langle \lambda_j, \alpha_i\rangle \leq  -2\ell(\omega_0)$ for all $j\in\{1, 2\}$ and   $i\in I$. Let
      $v_1, v_2\in W$. If $v_1\neq v_2$,  then
           $\ell(t_{v_2^{-1}v_1(\lambda_1)+\lambda_2})\leq\langle\lambda_1 +\lambda_2, -2\rho\rangle-4\ell(\omega_0)$.
 \end{lemma}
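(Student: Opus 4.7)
The plan is to compute $\ell(t_\mu)$ for $\mu = v_2^{-1}v_1(\lambda_1) + \lambda_2$ by passing to the antidominant element of its $W$-orbit. Setting $w = v_2^{-1}v_1$ and choosing $u \in W$ so that $\mu' := u(\mu) = uw(\lambda_1) + u(\lambda_2)$ lies in $\tilde Q^\vee$, Lemma \ref{len22}(a) gives $\ell(t_\mu) = \langle \mu', -2\rho\rangle$. First I would rewrite this pairing by moving $u$ and $uw$ off of the $\lambda_j$'s and onto $\rho$, obtaining
\[
\ell(t_\mu) = \langle \lambda_1 + \lambda_2, -2\rho\rangle + 2\langle \lambda_1, \rho - (uw)^{-1}(\rho)\rangle + 2\langle \lambda_2, \rho - u^{-1}(\rho)\rangle,
\]
so the task reduces to showing the two correction terms together are at most $-4\ell(\omega_0)$.

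Next I would invoke the standard identity $\rho - v^{-1}(\rho) = \sum_{\alpha \in R^+,\, v(\alpha) \in -R^+}\alpha$, which expresses the correction as an explicit sum of exactly $\ell(v)$ positive roots. Since every positive root $\alpha = \sum c_i \alpha_i$ satisfies $c_i \geq 0$ and $\sum c_i \geq 1$, the hypothesis $\langle \lambda_j, \alpha_i\rangle \leq -2\ell(\omega_0)$ immediately upgrades, by linearity, to $\langle \lambda_j, \alpha\rangle \leq -2\ell(\omega_0)$ for every $\alpha \in R^+$. Applying this to each summand yields the term-by-term estimates
\[
2\langle \lambda_1, \rho - (uw)^{-1}(\rho)\rangle \leq -4\ell(\omega_0)\,\ell(uw), \qquad 2\langle \lambda_2, \rho - u^{-1}(\rho)\rangle \leq -4\ell(\omega_0)\,\ell(u).
\]

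Finally, the hypothesis $v_1 \neq v_2$ forces $w \neq 1$, so we cannot simultaneously have $u = 1$ and $uw = 1$; hence $\ell(u) + \ell(uw) \geq 1$. Combining with the previous display gives
\[
\ell(t_\mu) \leq \langle \lambda_1 + \lambda_2, -2\rho\rangle - 4\ell(\omega_0),
\]
as claimed. The main obstacle is purely bookkeeping: ensuring the correct sign in the identity for $\rho - v^{-1}(\rho)$ and using the antidominance of $\lambda_1, \lambda_2$ in the right direction. There are no case distinctions and no combinatorics beyond Lemma \ref{len22}(a); the estimate is forced by the assumption that $\lambda_1, \lambda_2$ lie very deep in the antidominant chamber.
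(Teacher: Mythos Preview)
Your proof is correct and follows the same overall strategy as the paper's: pass to the antidominant representative of the $W$-orbit of $\mu$, apply Lemma~\ref{len22}(a), and bound the correction term. The execution differs, however. The paper treats $\lambda_1$ and $\lambda_2$ asymmetrically: it invokes Lemma~\ref{dom} to write $wv_2^{-1}v_1(\lambda_1)=\lambda_1+\mu_1$ with $\mu_1\succcurlyeq 0$ (contributing $-\langle\mu_1,2\rho\rangle\le 0$), and then expands $w(\lambda_2)$ via a reduced expression $w=\sigma_{\beta_1}\cdots\sigma_{\beta_k}$, isolating the single term $2\langle\lambda_2,\beta_1\rangle\le -4\ell(\omega_0)$ and discarding the rest. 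This asymmetry forces a case split on whether the conjugating element is trivial, in which case the roles of $\lambda_1$ and $\lambda_2$ must be swapped. Your use of the identity $\rho - v^{-1}(\rho)=\sum_{\alpha\in R^+,\,v(\alpha)\in -R^+}\alpha$ handles both correction terms symmetrically and replaces the case split by the single observation that $\ell(u)+\ell(uw)\ge 1$ when $w\neq 1$; it also yields the sharper intermediate bound $-4\ell(\omega_0)\bigl(\ell(u)+\ell(uw)\bigr)$. Both routes work; yours is marginally cleaner.
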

 \begin{proof}
   Take $w\in W$ such that $w(v_2^{-1}v_1(\lambda_1)+\lambda_2)\in\tilde Q^\vee$.
   \, By Lemma \ref{dom}, one has $wv_2^{-1}v_1(\lambda_1)=\lambda_1+\mu_1$ with $\mu_1\succcurlyeq 0$.
   If $w\neq 1$, then $w=[\sigma_{\beta_1}\cdots\sigma_{\beta_k}]_{\scriptsize\mbox{red}}$ with $k\geq 1$. Note that
     $w(\lambda_2)=\lambda_2-\sum_{j=1}^k\langle \lambda_2, \beta_j\rangle \gamma_j^\vee$, where $\gamma_j=\sigma_{\beta_1}\cdots\sigma_{\beta_{j-1}}(\beta_j)\in R^+$.
        Therefore, it follows from Lemma \ref{len22} that
       \begin{align*}\ell(t_{v_2^{-1}v_1(\lambda_1)+\lambda_2})
                        &=\langle \lambda_1+\mu_1+w(\lambda_2), -2\rho\rangle\\
                        &=\langle\lambda_1 +\lambda_2+\mu_1, -2\rho\rangle+\sum\nolimits_{j=1}^k\langle\lambda_2, \beta_j\rangle\langle\gamma_j^\vee, 2\rho\rangle\\
                        &= \langle\lambda_1 +\lambda_2, -2\rho\rangle-\langle\mu_1, 2\rho\rangle+\langle\lambda_2, \beta_1\rangle\cdot2+\sum_{j=2}^k\langle\lambda_2, \beta_j\rangle\langle\gamma_j^\vee, 2\rho\rangle\\
                        &\leq \langle\lambda_1 +\lambda_2, -2\rho\rangle-0 -2\ell(\omega_0)\cdot 2+0.
        \end{align*}
     If $w=1$, then $w(v_2^{-1}v_1(\lambda_1)+\lambda_2)=\lambda_2+w'(\lambda_1)$ with $w'=v^{-1}_2v_1\neq1$. With the same argument as above, one has
            $\ell(t_{v_2^{-1}v_1(\lambda_1)+\lambda_2})\leq \langle\lambda_1 +\lambda_2, -2\rho\rangle-4\ell(\omega_0).$
 \end{proof}

\begin{prop}\label{lemm22} Let {\upshape $x, y\in W_{\scriptsize\mbox{af}}^-$ }  with
     $x=ut_\eta, y=vt_\kappa$.   If $\langle \eta, \alpha_i\rangle \leq -5\ell(\omega_0)$ and
           $\langle \kappa, \alpha_i\rangle \leq -5\ell(\omega_0)$  for each $i\in I$, then we have
     {\upshape $$\mathfrak{S}_x\mathfrak{S}_y=\sum_{wt_{\mu}\in W_{\scriptsize\mbox{af}}^-\atop \ell(wt_\mu)\geq \ell(x)+\ell(y)}
                         \sum_{v_1\in W, \lambda_1, \lambda_2\in\tilde Q^\vee\atop \lambda_1\succcurlyeq \eta, \lambda_2\succcurlyeq \kappa, \lambda_1+\lambda_2\preccurlyeq \mu}
                           \!\!\!\!\! \!c_{x, [v_1t_{\lambda_1}]}c_{y, [v_1t_{\lambda_2}]}d_{wt_\mu, [v_1t_{\lambda_1+\lambda_2}]}  \mathfrak{S}_{wt_\mu}.$$
                  }
\end{prop}

\begin{proof}
 Note that $\mathfrak{S}_x\mathfrak{S}_y=\sum_{z\in W_{\scriptsize\mbox{af}}^-; \ell(z)\geq \ell(x)+\ell(y)}b_{x, y}^{\, z}\mathfrak{S}_z$ by  Corollary \ref{coe}.
    Now let $z=wt_\mu\in  W_{\scriptsize\mbox{af}}^-$ with $\ell(z)\geq \ell(x)+\ell(y)=\langle \eta+\kappa,-2\rho\rangle-\ell(u)-\ell(v)$.

    Note that $c_{x, [v_1t_{\lambda_1}]}\neq 0$ only if $v_1t_{\lambda_1}\preccurlyeq ut_\eta\preccurlyeq t_\eta$, which implies $\lambda_1\succcurlyeq \eta$;
              $c_{y, [v_2t_{\lambda_2}]}\neq 0$ only if $v_2t_{\lambda_2}\preccurlyeq vt_\kappa\preccurlyeq t_\kappa$, which implies $\lambda_2\succcurlyeq \kappa$.
              Hence,  $\lambda_1= \eta+\lambda_3=\eta+\sum_{i\in I}a_i\alpha_i^\vee$,  $\lambda_2=\kappa+\lambda_4=\kappa+\sum_{i\in I}b_i\alpha_i^\vee$ with $a_i, b_i\geq 0$
               for each $i\in I$.
    Note that $d_{z, [v_2t_{v_2^{-1}v_1(\lambda_1)+\lambda_2}]}\neq 0$ only if $z \preccurlyeq v_2t_{v_2^{-1}v_1(\lambda_1)+\lambda_2}$, in particular only if
              $\ell(v_2t_{v_2^{-1}v_1(\lambda_1)+\lambda_2})\geq \ell(z)
              \geq\ell(t_{\eta+\kappa})-\ell(u)-\ell(v)\geq\ell(t_{\eta+\kappa})-2\ell(\omega_0)$.
              Furthermore,
           \begin{align*}
                    \ell(v_2t_{v_2^{-1}v_1(\lambda_1)+\lambda_2})&=\ell(v_1t_{\lambda_1}v_1^{-1}v_2t_{\lambda_2})\\
                     &\leq\ell(v_1t_{\lambda_1})+\ell(v_1^{-1}v_2)+\ell(t_{\lambda_2})\\
                             &=\langle \lambda_1, -2\rho\rangle-\ell(v_1)+\ell(v_1^{-1}v_2)+\langle\lambda_2, -2\rho\rangle\\
                        &=\langle \eta+\kappa, -2\rho\rangle-2\sum\nolimits_{i\in I}(a_i+b_i)\langle \alpha_i^\vee, \rho\rangle-\ell(v_1)+\ell(v_1^{-1}v_2)\\
                         &\leq\ell(t_{\eta+\kappa})-2\sum\nolimits_{i\in I}(a_i+b_i)-0+\ell(\omega_0).
                    \end{align*}
    Hence, $2\sum\nolimits_{i\in I}(a_i+b_i)\leq 3\ell(\omega_0)$. In particular, $0\leq2a_i\leq 3\ell(\omega_0), 0\leq2 b_i\leq 3\ell(\omega_0)$,
           $\langle\lambda_1, \alpha_i\rangle =\langle \eta, \alpha_i\rangle+a_i\langle \alpha_i^\vee, \alpha_i\rangle+\sum_{j\neq i}a_j\langle \alpha_j^\vee, \alpha_i\rangle
             \leq-5\ell(\omega_0)+3\ell(\omega_0)+0= -2\ell(\omega_0)$ and  $\langle\lambda_2, \alpha_i\rangle\leq -2\ell(\omega_0)$ for each $i\in I$.
   Therefore, $v_1=v_2$; since if $v_1\neq v_2$, then a contradiction comes out following from Lemma \ref{lemm}:

        $\ell(v_2t_{v_2^{-1}v_1(\lambda_1)+\lambda_2})\leq \ell(v_2)+  \ell(t_{\lambda_1+\lambda_2})-4\ell(\omega_0)\leq
           \ell(\omega_0)+  \ell(t_{\eta+\kappa})-4\ell(\omega_0)$.

      So far, we have shown that the effective summation for $z=wt_\mu$ runs over those elements $v_1t_{\lambda_1}, v_1t_{\lambda_2}\in W_{\scriptsize\mbox{af}}^-$
       with $\lambda_1\succcurlyeq \eta$ and $\lambda_2\succcurlyeq \kappa$. Note that $\lambda_1, \lambda_2\in \tilde Q^\vee$ are regular,
          then $v_1t_{\lambda_i}\in W_{\scriptsize\mbox{af}}^-$ for any $v_1\in W$. Note
          $d_{z, [v_1t_{ \lambda_1+\lambda_2}]}\neq 0$ only if $wt_\mu \preccurlyeq v_1t_{ \lambda_1 +\lambda_2}\preccurlyeq t_{ \lambda_1 +\lambda_2}$,
          which implies $ { \lambda_1 +\lambda_2}\preccurlyeq \mu$. Thus the statement follows.
\end{proof}

\bigskip

Due to Peterson's Theorem (Theorem \ref{th11}) and Theorem \ref{str}, in fact,
       we obtain  an explicit formula   for all the equivariant quantum Schubert structure constants $\tilde N_{u, v}^{w, \lambda}$'s.
     Namely we just need to find $\eta, \kappa, \mu\in \tilde Q^\vee$ such that $x=ut_\eta, y=vt_\kappa,   z=wt_\mu$ lie in {\upshape
                $  W_{\scriptsize\mbox{af}}^- $}  and $\lambda=\mu-\eta-\kappa$, and then compute
                  $\tilde N_{u, v}^{w, \lambda}=(-1)^{\ell(z)-\ell(x)-\ell(y)}b_{x, y}^{\, z} .$
       In particular, we obtain a formula for quantum Schubert structure constants $N_{u, v}^{w, \lambda}$ by taking the non-equivariant limit
       $(\alpha_1, \cdots, \alpha_n)\to (0, \cdots, 0)$ at the origin.
       Although $c_{x, [t_1]}, c_{y, [t_2]}$ are rational functions, the summation
            $b_{x, y}^{\,z}=\sum_{t_1, t_2} c_{x, [t_1]}, c_{y, [t_2]}d_{z, [t_1t_2]}$ turns out to be a polynomial
            in $\alpha_i$'s so that the non-equivariant limit does exist, which will  equal  0 if the degree does not match.
       Furthermore, if the degree matches, then $b_{x, y}^{\, z}$ turns out to be a constant function (Corollary \ref{coe}) so that we can
       take a non-equivariant limit at any point. In order to compute  $N_{u, v}^{w, \lambda}$'s  in practise (by hand or by computer), we would like to
       choose $\eta, \kappa$ such that $\ell(x), \ell(y)$ are as small as possible. However, in order to give a neat formula, we would like
        to choose $A=\eta=\kappa$ that satisfies the assumption of \ref{lemm22}. For instance, we have chosen one such $A$ in Theorem \ref{th22}, in which
        the remark ``which is in fact a regular and anti-dominant element in $Q^\vee$" ensures that  $ut_A, vt_A, wt_{2A+\lambda}$ do lie in $
                  W_{\scriptsize\mbox{af}}^- $.

\begin{proof}[Proof of Theorem \ref{th22}]
  Note that
         $N_{u, v}^{w, \lambda}=\tilde N_{u, v}^{w, \lambda}\big|_{\alpha_1= \cdots=\alpha_n=0}$, the evaluation of the equivariant quantum Schubert structure constant
                  $\tilde N_{u, v}^{w, \lambda}\in\mathbb{Q}[\alpha_1, \cdots, \alpha_n]$ at the origin $(\alpha_1, \cdots \alpha_n)=(0, \cdots, 0)$.

    It follows from Theorem \ref{th11} that $\tilde N_{u, v}^{w, \lambda}=(-1)^{\ell(z)-\ell(x)-\ell(y)}b_{x, y}^{\, z}$ for
            $x, y, z\in W_{\scriptsize\mbox{af}}^-$   with  $x=ut_\eta, y=vt_\kappa, z=wt_\mu$ and $\lambda=\mu-\eta-\kappa$.
         Note  that $b_{x, y}^{\, z}$ is a homogeneous (rational) polynomial of degree
                  $\ell(z)-\ell(x)-\ell(y)=\ell(t_\mu)-\ell(w)-(\ell(t_\eta)-\ell(u))-(\ell(t_\kappa)-\ell(v))=\ell(u)+\ell(v)-\ell(w)-\langle \lambda, 2\rho\rangle$.
  \begin{enumerate}
     \item If $\langle \lambda, 2\rho\rangle>\ell(u)+\ell(v)-\ell(w)$, then it follows form Corollary \ref{coe} that $b_{x, y}^{\, z}=0$ and therefore
                       $N_{u, v}^{w, \lambda}=0$.
            If $\langle \lambda, 2\rho\rangle<\ell(u)+\ell(v)-\ell(w)$, then  $b_{x, y}^{\, z}$ is a homogeneous polynomial of positive degree $\ell(z)-\ell(x)-\ell(y)>0$.
                 The evaluation of $b_{x, y}^{\, z}$ at the origin is 0,  and therefore
                       $N_{u, v}^{w, \lambda}=0$.
     \item If $\langle \lambda, 2\rho\rangle=\ell(u)+\ell(v)-\ell(w)$, then  $b_{x, y}^{\, z}$ is a constant polynomial. In particular,
          $$N_{u, v}^{w, \lambda}=\tilde N_{u, v}^{w, \lambda}\big|_{\alpha_1=\cdots=\alpha_n=0}= b_{x, y}^{\, z}\big|_{\alpha_1=\cdots=\alpha_n=0}=b_{x, y}^{\, z}\big|_{\alpha_1=\cdots=\alpha_n=1}.$$

         Take $\eta, \kappa, \mu\in \tilde Q^\vee$ such that $x, y, z\in W_{\scriptsize\mbox{af}}^-$ with $x=ut_\eta, y=vt_\kappa, z=wt_\mu$ and $\lambda=\mu-\eta-\kappa$.
         This can be done as follows.

         The possible determinant of the Cartan matrix $\big(\langle \alpha_i^\vee,\alpha_j\rangle\big)$ is $1, 2, 3, 4$ and $n+1$ (see e.g. section 13 of \cite{hum}).
         As a consequence, the element $A=-12n(n+1)\sum_{i\in I}w_i^\vee$ is  in the coroot lattice $Q^\vee$. Furthermore, $A\in \tilde Q^\vee$
         and $\langle A, \alpha_i\rangle =-12n(n+1)<-5|R^+|=-5\ell(\omega_0)$ (see e.g. \cite{hum}). Note that
         $\lambda=\sum_{i\in I}a_i\alpha_i^\vee\succcurlyeq 0$ and
             $\langle \lambda, \alpha_i\rangle \leq  2a_i     \leq 
              \langle \lambda, 2\rho\rangle=\ell(u)+\ell(v)-\ell(w)\leq 2\ell(\omega_0)$. Thus  $\langle 2A+\lambda, \alpha_i\rangle <0$
            for each $i\in I$.
          Let $x=ut_A, y=vt_A$ and $z=wt_{2A+\lambda}$. Then $x, y, z\in W_{\scriptsize\mbox{af}}^-$ and   $\lambda=(2A+\lambda)-A-A$.

          Hence, the first formula
           follows from Theorem \ref{str} and  Theorem \ref{th11},  and the second formula follows
                 from Proposition \ref{lemm22} immediately.
  \end{enumerate}
\end{proof}

 The following vanishing criterion is also a  consequence of Peterson's Theorem.

\begin{prop}\label{pro55}
   For any $u, v, w\in W$ and $\lambda\in Q^\vee$ with $\lambda\succcurlyeq 0$,
             we take $\eta, \kappa\in\tilde Q^\vee$ such that
                $x=ut_\eta$ and $y=vt_\kappa$ lie in  {\upshape $W_{\scriptsize\mbox{af}}^-$} and we denote $\mu=\eta+\kappa+\lambda$. If {\upshape $wt_\mu\not\in W_{\scriptsize\mbox{af}}^-$},
                  then the equivariant quantum Schubert structure constant   $\tilde N_{u, v}^{w, \lambda}$ 
                  vanishes.
\end{prop}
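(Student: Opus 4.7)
The plan is to reduce to the setting of Theorem \ref{th11}(ii) by shifting $\eta,\kappa$ deeper into the anti-dominant cone, so that the natural $z$-element becomes length-minimizing, and then to use Proposition \ref{prop22} to identify the corresponding structure coefficient with one whose basis expansion is manifestly empty.

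Concretely, I will fix a regular $\tau \in \tilde Q^\vee$ that is sufficiently anti-dominant (for instance $\tau = -N\sum_{i \in I}w_i^\vee$ with $N \gg 0$), set $\eta' = \eta + \tau$, $\kappa' = \kappa + \tau$, $\mu' = \mu + 2\tau$, and let $x' = ut_{\eta'}$, $y' = vt_{\kappa'}$, $z' = wt_{\mu'}$. By Lemma \ref{len22}(b) all three lie in $W_{\scriptsize\mbox{af}}^-$, and $\lambda = \mu' - \eta' - \kappa'$ is unchanged. Theorem \ref{th11}(ii) then gives $\tilde N_{u,v}^{w,\lambda} = \tilde b_{x',y'}^{\,z'}$, so it suffices to prove $b_{x',y'}^{\,z'} = 0$.

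The core step applies Proposition \ref{prop22} twice. It first yields $\mathfrak{S}_{x'} = \mathfrak{S}_x \mathfrak{S}_{t_\tau}$ and $\mathfrak{S}_{y'} = \mathfrak{S}_y \mathfrak{S}_{t_\tau}$, so associativity and commutativity of the Pontryagin product imply
\[
\mathfrak{S}_{x'}\mathfrak{S}_{y'} \,=\, \mathfrak{S}_x\mathfrak{S}_y\mathfrak{S}_{t_{2\tau}} \,=\, \sum_{z'' = w''t_{\mu''}\in W_{\scriptsize\mbox{af}}^-} b_{x,y}^{\,z''}\, \mathfrak{S}_{w''t_{\mu''+2\tau}},
\]
where the last equality applies Proposition \ref{prop22} termwise (with $\tau$ deep enough to keep each $\mu''+2\tau$ regular anti-dominant; by Corollary \ref{coe} only finitely many $z''$ occur, so a single large $\tau$ handles all of them). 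Matching the coefficient of $\mathfrak{S}_{z'} = \mathfrak{S}_{wt_{\mu+2\tau}}$ on both sides and using the bijection $W \ltimes Q^\vee \to W_{\scriptsize\mbox{af}}$, any $z''$ contributing to this coefficient must satisfy $w'' = w$ and $\mu'' = \mu$, i.e. $z'' = wt_\mu$. Since $wt_\mu \notin W_{\scriptsize\mbox{af}}^-$ by hypothesis, no such $z''$ appears in the sum, forcing $b_{x',y'}^{\,z'} = 0$, and hence $\tilde N_{u,v}^{w,\lambda} = 0$.

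The only subtlety is the bookkeeping: one must check that $\tau$ can indeed be chosen simultaneously so that $x', y', z' \in W_{\scriptsize\mbox{af}}^-$ and so that each element $w''t_{\mu''+2\tau}$ arising in the expansion also lies in $W_{\scriptsize\mbox{af}}^-$. Both follow from the regular anti-dominance of $2\tau$ together with Lemma \ref{len22}(b), so the argument is essentially routine once this reduction strategy is in place; the real conceptual input is the uniqueness of the representative $(w,\mu)$ of $wt_\mu$ in the semidirect product decomposition $W_{\scriptsize\mbox{af}} = W \ltimes Q^\vee$.
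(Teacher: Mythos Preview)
Your proof is correct and follows essentially the same approach as the paper: shift by a deep anti-dominant element (the paper calls it $B$, you call it $\tau$), invoke Theorem~\ref{th11}(ii) to identify $\tilde N_{u,v}^{w,\lambda}$ with a structure coefficient for the shifted elements, and then use Proposition~\ref{prop22} to rewrite $\mathfrak{S}_{x'}\mathfrak{S}_{y'}$ as $\mathfrak{S}_{t_{2\tau}}\sum_{z''\in W_{\scriptsize\mbox{af}}^-} b_{x,y}^{\,z''}\mathfrak{S}_{z''}=\sum_{z''} b_{x,y}^{\,z''}\mathfrak{S}_{z''t_{2\tau}}$, concluding that the coefficient at $wt_{\mu+2\tau}$ vanishes since $wt_\mu\notin W_{\scriptsize\mbox{af}}^-$. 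Your caveat about choosing $\tau$ large enough to make every $\mu''+2\tau$ regular is in fact unnecessary: once $z''\in W_{\scriptsize\mbox{af}}^-$ and $t_{2\tau}\in W_{\scriptsize\mbox{af}}^-$, Lemma~\ref{len22}(b) already forces $z''t_{2\tau}\in W_{\scriptsize\mbox{af}}^-$, so Proposition~\ref{prop22} applies termwise without any additional uniform bound.
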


\begin{proof}
   Denote $d=\ell(u)+\ell(v)-\ell(w)-\langle \lambda, 2\rho\rangle$.
   Take $M\in \mathbb{N}$ with $12(n+1)|M$ and $M\gg 0$  such that $B=-M\sum_{i\in I}w_i^\vee\in \tilde Q^\vee$, which does exist (following the proof of Theorem \ref{th22}),
   and  $\mu+2B\in \tilde Q^\vee$ is regular. Then $xt_B, yt_B, wt_{\mu+2B}\in W_{\scriptsize\mbox{af}}^-$. Therefore it follows from Theorem \ref{th11} that
   $\tilde N_{u, v}^{w, \lambda}=(-1)^db_{xt_B, yt_B}^{\, wt_{\mu+2B}}$. On the other hand,
     it follows from Proposition \ref{prop22} that
       $$\mathfrak{S}_{xt_B}\mathfrak{S}_{yt_B}=\mathfrak{S}_{t_{2B}}\mathfrak{S}_x\mathfrak{S}_y=\mathfrak{S}_{t_{2B}}\sum\nolimits_{z\in W_{\scriptsize\mbox{af}}^-} b_{x, y}^{\,z}
        \mathfrak{S}_z= \sum\nolimits_{z\in W_{\scriptsize\mbox{af}}^-} b_{x, y}^{\,z}\mathfrak{S}_{zt_{2B}}.$$
       Therefore for $zt_{2B}\in W_{\scriptsize\mbox{af}}^-$, $b_{xt_B, yt_B}^{\, zt_{2B}}\neq0$ only if $z\in W_{\scriptsize\mbox{af}}^-$.
       Hence,   $\tilde N_{u, v}^{w, \lambda}=0$.
\end{proof}
 As we will see  in section \ref{ex11}, Proposition \ref{pro55} is useful when we need to compute the quantum Schubert structure constants
  for $G/B$ by hand when the rank of $G$ is not too big.

\section{Examples }
In this section, we give two examples to demonstrate the effectiveness of our formula. To make the procedure   precise,  the first example is
simple and includes some more explanations.

\subsection{Type $A_2$} \label{ex11}

  $G=SL(3, \mathbb{C})$;  $B\subset G$ consists of  upper triangular matrices in $G$.
    In this case, $X=G/B=\{V_1\leqslant V_2\leqslant  \mathbb{C}^3~|~ \dim_{\mathbb{C}}V_i=i, i=1, 2\}$. 

 $\Delta=\{\alpha_1 , \alpha_2 \},  R^+=\{\alpha_1, \alpha_2,  \theta=\alpha_1+\alpha_2\}.$
 Denote $s_i=\sigma_{\alpha_i}$, then one has $W=\{1, s_1, s_2,s_1s_2, s_2s_1, s_1s_2s_1\}\cong  S_3$.
    $\sigma^{s_1s_2}\star\sigma^{s_1s_2}$, $\sigma^{s_1s_2}\star\sigma^{s_2s_1}$,
    $\sigma^{s_2s_1}\star\sigma^{s_2s_1}$, $\sigma^{s_1s_2}\star\sigma^{s_1s_2s_1}$,
    $\sigma^{s_2s_1}\star\sigma^{s_1s_2s_1}$ and $\sigma^{s_1s_2s_1}\star\sigma^{s_1s_2s_1}\in QH^*(X)$
       are the only products  that are not given by the quantum Chevalley formula directly. As an application of our theorems,
       we compute one of them in details as follows.

   \bigskip

{\itshape   \noindent \textbf{General discussion}:      For  $u, v\in W$ with $\ell(u)\geq 2$ and $\ell(v)\geq 2$,
      $\sigma^u\star\sigma^v=\sum_{w, \lambda}N^{w, \lambda}_{u, v}q_\lambda\sigma^w$.
          Note that $N^{w, \lambda}_{u, v}=0$ unless  $\lambda=a_1\alpha_1^\vee+a_2\alpha^\vee_2\succcurlyeq 0$ and
           $2(a_1+a_2)=\langle\lambda, 2\rho\rangle=\ell(u)+\ell(v)-\ell(w)\geq 4-\ell(w)\geq 1$, in which case $q_\lambda=q_1^{a_1}q^{a_2}_2$. Note that
            $-\theta^{\vee}=-\alpha_1^\vee-\alpha_2^\vee\in\tilde Q^\vee$ is regular. Therefore,} {\upshape $x=ut_{-\theta^\vee}, y=vt_{-\theta^\vee}\in W_{\scriptsize\mbox{af}}^-$.}
 {\itshape          Let $z=wt_{-2\theta^\vee+\lambda}$. By Proposition \ref{pro55},
            Theorem \ref{th11} and Theorem \ref{th22},  we have the following
}
          \begin{enumerate}
              \item[(i)] \textit{If}   $z\not\in W_{\scriptsize\mbox{af}}^-$  \textit{or} $\ell(z)\neq \ell(x)+\ell(y)$, \textit{then } $N^{w, \lambda}_{u, v}=0$.
              \item[(ii)]   \textit{If } {\upshape $z\in W_{\scriptsize\mbox{af}}^-$},  \textit{then}
                 {\upshape  
                 $$N^{w, \lambda}_{u, v}=b_{x, y}^{\, z}=\sum_{t_1, t_2\in Q^\vee} c_{x, [t_1]}c_{y, [t_2]}d_{z, [t_1t_2]}
                 =\sum
                                   c_{x, [v_1t_{\lambda_1}]}c_{y, [v_2t_{\lambda_2}]}
                                   d_{z, [v_2t_{v_2^{-1}v_1(\lambda_1)+\lambda_2}]},$$
                          }
         \end{enumerate}
{\itshape  \noindent  where the  effective summation runs over those  } $ v_it_{\lambda_i}=m_{[t_i]}\in W_{\scriptsize\mbox{af}}^-$
{\itshape  satisfying         $ v_1t_{\lambda_1}\preccurlyeq x$ and $v_2t_{\lambda_2}\preccurlyeq y$.}
 {\itshape           Furthermore if $x \neq 1$, then $d_{z, [t_2]}=0$ as $\ell(z)>\ell(y)\geq\ell([t_2])$. In particular if $x, y\neq 1$,
             then we do not need to consider the case $v_it_{\lambda_i}=1$.}

\bigskip
\noindent \textbf{Calculation for the case  \, $u=s_1s_2$ and $v=s_1s_2s_1$.}
\bigskip

               \noindent  In this case, $x=s_1s_2t_{-\theta^\vee}=s_2s_0$ and $y=s_1s_2s_1t_{-\theta^\vee}=s_0$.
             $\lambda=a_1\alpha_1^\vee+a_2\alpha_2^\vee$ with $a_1, a_2\geq 0$ and
                          $2(a_1+a_2)=\ell(u)+\ell(v)-\ell(w)=5-\ell(w)$. Hence, $(a_1, a_2)=(1, 1), (2, 0), (0, 2), (1, 0)$ or $(0, 1)$.

                   If $(a_1, a_2)=(2, 0)$, then  $z\not\in W_{\scriptsize\mbox{af}}^-$ by noting $-2\theta^\vee+\lambda=2\alpha_2^\vee\not\in \tilde Q^\vee$.
                     If $(a_1, a_2)=(1, 0)$, then $\lambda=\alpha_1$,
                     $\ell(w)=3$ and $w=s_1s_2s_1$. Since $\langle-2\theta^\vee+\alpha_1, \alpha_1\rangle=0$ while $w(\alpha_1)=-\alpha_2\not\in R^+$,
                      $z\not \in W_{\scriptsize\mbox{af}}^-$. Similarly, we can show  $z\not \in W_{\scriptsize\mbox{af}}^-$ if
                         $(a_1, a_2)=  (0, 2) $ or $(0, 1)$. Hence,  $N_{u, v}^{w, \lambda}=0$ unless  $(a_1, a_2)=(1, 1)$.

                   If $(a_1, a_2)=(1, 1)$, then  $\ell(w)=5-2(1+1)=1$ and therefore $w=s_1$ or $s_2$.
                       Hence,  $\sigma^{u}\star\sigma^{v}=C_1q_1q_2\sigma^{s_1}+C_2q_1q_2\sigma^{s_2}$ for some real numbers $C_1$ and $C_2$.

                    Note that
                   $1\neq v_1t_{\lambda_1}\preccurlyeq x=s_2s_0$  with $v_1t_{\lambda_1}\in W_{\scriptsize\mbox{af}}^-$ implies
                    that $v_1t_{\lambda_1}=s_0 
                     $ or $s_2s_0$.
                   \, $1\neq v_2t_{\lambda_2}\preccurlyeq y=s_0$  with $v_2t_{\lambda_2}\in W_{\scriptsize\mbox{af}}^-$ implies
                    that $v_2t_{\lambda_2}=s_0=y.$  Hence,
                   \begin{align*} \mbox{}   b_{x, y}^{\, z}&= c_{x, [s_0]}c_{y, [y]}
                                                         d_{z, [\sigma_\theta t_{-\theta^\vee-\theta^\vee}]}+c_{x, [s_2s_0]}c_{y, [y]}
                                                    d_{z, [\sigma_\theta t_{\sigma_\theta s_1s_2(-\theta^\vee)-\theta^\vee}]}\\
                                                 &= c_{s_2s_0, s_0 }' c_{s_0, s_0}'
                                                      d_{z, [\sigma_\theta t_{-2\theta^\vee}]}+c_{s_2s_0, s_2s_0}' c_{s_0, s_0}'
                                                    d_{z, [\sigma_\theta t_{-\alpha_2^\vee-\theta^\vee}]}.
                   \end{align*}

                \begin{align*}\mbox{ Note that}\qquad c_{s_0, s_0}'&=(-1)^1\cdot {1\over s_0(\alpha_0)}\big|_{\alpha_0=-\theta}=-{1\over \theta};\\
                              c_{s_2s_0, s_0}'&=(-1)^2\cdot{1\over  \alpha_2  s_0(\alpha_0)}\big|_{\alpha_0=-\theta}={1\over \alpha_2 \theta};\\
                                c_{s_2s_0, s_2s_0}'&=(-1)^2{1\over s_2(\alpha_2)s_2s_0(\alpha_0)}\big|_{\alpha_0=-\theta}
                                                      ={1\over \alpha_2s_2(\alpha_0)}\big|_{\alpha_0=-\theta}=-{1\over \alpha_2\alpha_1}.
              \end{align*}

                 Note that  $ \sigma_\theta t_{-2\theta^\vee}=[s_0s_2s_1s_2s_0]_{\scriptsize\mbox{red}}$
                   and    $ \sigma_\theta t_{-\alpha_2^\vee-\theta^\vee}=[s_0s_2s_1s_0s_1]_{\scriptsize\mbox{red}}$.

                       Now for $w=s_1$ and $\lambda=\theta^\vee$, we have $z=s_1t_{-\theta^\vee}=[s_2s_1s_0]_{\scriptsize\mbox{red}}$. Therefore
                        \begin{align*}
                          d_{z, [\sigma_\theta t_{-2\theta^\vee}]}&=d_{s_2s_1s_0,  [s_0s_2s_1s_2s_0]}
                                                                  =s_0(\alpha_2)s_0s_2(\alpha_1)s_0s_2s_1s_2(\alpha_0)\big|_{\alpha_0=
                                                                     -\theta}=-\alpha_1\theta^2;\\
                           d_{z, [\sigma_\theta t_{-\alpha_2^\vee-\theta^\vee}]}
                                       &=d_{s_2s_1s_0, [s_0s_2s_1s_0s_1]}=s_0(\alpha_2)s_0s_2(\alpha_1)s_0s_2s_1(\alpha_0)\big|_{\alpha_0=-\theta}
                                                   = - \alpha_1^2\theta.
                        \end{align*}
        \vspace{-0.5cm}

              $$\mbox{ Hence,}\quad C_1= b_{x, y}^{\, z}={1\over \alpha_2 \theta}\cdot {-1\over \theta}\cdot({  -\alpha_1\theta^2})
                                             +   {-1\over \alpha_2\alpha_1} {-1\over \theta}\cdot( - \alpha_1^2\theta)={\alpha_1\over \alpha_2 }+(-{\alpha_1\over \alpha_2}) = 0.
                                                      \qquad\quad\mbox{}       $$

                         Now for $w=s_2$ and $\lambda=\theta^\vee$, we have $z=s_2t_{-\theta^\vee}=[s_1s_2s_0]_{\scriptsize\mbox{red}}$.
                               Note that
                           $d_{z, [\sigma_\theta t_{-2\theta^\vee}]}=d_{s_1s_2s_0, [s_0s_2s_1s_2s_0]}=s_0s_2(\alpha_1)s_0s_2s_1(\alpha_2)s_0s_2s_1s_2(\alpha_0)\big|_{\alpha_0=-\theta}
                            =-\alpha_2\theta^2$ and that
                            $   d_{z, [\sigma_\theta t_{-\alpha_2^\vee-\theta^\vee}]}= d_{s_1s_2s_0, [s_0s_2s_1s_0s_1]}=0$ as $s_1s_2s_0\not\preccurlyeq s_0s_2s_1s_0s_1$.
                      Therefore,  $C_2=b_{x, y}^{\, z}={1\over \alpha_2 \theta}\cdot {-1\over \theta}\cdot(-\alpha_2\theta^2)=1 $.

                Hence,
                       $$\quad \sigma^{s_1s_2}\star\sigma^{s_1s_2s_1}=q_1q_2\sigma^{s_2}.\hfill$$
                  Similarly, we can compute  quantum products for the remaining   cases.

\subsection{Type $B_3$}\label{ex22}
   $G=Spin(7, \mathbb{C})$; 
         $X=G/B=\{V_1\leqslant V_2\leqslant V_3\leqslant \mathbb{C}^7~|~ \dim_{\mathbb{C}}V_i=i,$ $  (V_i, V_{i})=0, i=1,2,3\},
                     $ where $(\cdot, \cdot)$ is a quadratic form  on $\mathbb{C}^7$.
                     See e.g. \cite{hum} for 
\upshape{
       $$\typeb\,\,;\quad\theta=\alpha_1+2\alpha_2+2\alpha_3=s_2s_3s_2(\alpha_1),\quad \theta^\vee=\alpha_1^\vee+2\alpha_2^\vee+\alpha_3^\vee,\quad |R^+|=9, $$
\vspace{-0.8cm}  \\ \noindent $\mbox{}$\hspace{-0.5cm} $\cordin$

\noindent $W$ is generated by simple reflections $\{s_1, s_2, s_3\}$, $|W|=48$, $\sigma_\theta=[s_2s_3s_2s_1s_2s_3s_2]_{\scriptsize\mbox{red}}$.
\bigskip

\noindent\textbf{ Calculation for}  $\sigma^u\star\sigma^v$,  where   $u=s_1s_2s_3s_1s_2$ and $v=s_3s_1s_2s_3s_1s_2$.
\bigskip

Note that $\langle -\theta^\vee, \alpha_2\rangle=-1<0$, $\langle -\theta^\vee, \alpha_i\rangle=0$ while $u(\alpha_i)\succcurlyeq0$
and $v(\alpha_i)\succcurlyeq0$ for $i=1, 3$. Hence, it suffices to take $A=-\theta^\vee$.  Then one has
         $x=ut_{-\theta^\vee}=[s_3s_2s_0]_{\scriptsize\mbox{red}}$ and $ y=vt_{-\theta^\vee}=[s_2s_0]_{\scriptsize\mbox{red}}$.
     Note that $u(-\theta^\vee)=\alpha_1^\vee+\alpha_2^\vee$ and $v(-\theta^\vee)=\alpha_1^\vee+\alpha_2^\vee+\alpha^\vee_3$.
           Denote $m_t$ the minimal length representative  in the coset $tW$ for $t\in Q^\vee$, then one has
          $$b_{x, y}^{\, z}=\sum 
                                   c_{x, [v_1t_{\lambda_1}]}c_{y, [v_2t_{\lambda_2}]}
                                   d_{z, [m_{t_1t_2}]},
               $$
        where $ v_it_{\lambda_i}=m_{t_i}\in W_{\scriptsize\mbox{af}}^-$ and the  effective summation runs over those satisfying
         $1\neq v_1t_{\lambda_1}\preccurlyeq x$ and $1\neq v_2t_{\lambda_2}\preccurlyeq y$. Explicitly, the possible nonzero terms are listed in the following table,
         where we denote
             $\eta=-2\alpha_1^\vee-3\alpha_2^\vee-2\alpha_3^\vee$, $\kappa=-2\alpha_1^\vee-2\alpha_2^\vee-\alpha_3^\vee.$

\begin{center}
\begin{tabular}{|r|l|c|c|c|c|}
  \hline
 $([v_1t_{\lambda_1}], [v_2t_{\lambda_2}])$ &  $ t_1\cdot t_2$  &  $m_{t_1t_2}$  &   $[m_{t_1t_2}]_{\scriptsize\mbox{red}}$    \\ \hline\hline
$([s_0], [s_0])$   & $t_{\theta^\vee+\theta^\vee}$   & $\sigma_\theta t_{-2\theta^\vee}$   &     $s_0s_2s_3s_2s_1s_2s_3s_2s_0$     \\ \hline
$([s_0], [s_2s_0])$   & $t_{\theta^\vee+v(-\theta^\vee)}$  & $s_1s_2s_3s_2s_1s_3s_2s_3t_{\eta}$  &  $s_0s_2s_3s_1s_2s_0$       \\ \hline
$([s_2s_0], [s_0])$   & $t_{v(-\theta^\vee)+\theta^\vee}$  & $s_1s_2s_3s_2s_1s_3s_2s_3t_{\eta}$  &  $s_0s_2s_3s_1s_2s_0$       \\ \hline
$([s_2s_0], [s_2s_0])$   & $t_{v(-\theta^\vee)+v(-\theta^\vee)}$  & $ vt_{-2\theta^\vee}$ &  $s_2s_0s_2s_3s_2s_1s_2s_3s_2s_0$          \\ \hline
$([s_2s_1s_0], [s_0])$   &  $t_{u(-\theta^\vee)+\theta^\vee}$ & $s_1s_2s_1s_3s_2s_1s_3t_{\eta}$  &  $s_0s_3s_2s_3s_1s_2s_0$         \\ \hline
$([s_2s_1s_0], [s_2s_0])$   & $t_{u(-\theta^\vee)+v(-\theta^\vee)}$  & $s_1s_2s_3s_2s_1t_{\kappa}$    &   $s_0s_2s_3s_2s_0$       \\ \hline
\end{tabular}
\end{center}
 Note that $s_3s_0=s_0s_3\in s_0W$. By definition,
    \begin{align*}
        c_{x, [s_0]}&=c_{s_3s_2s_0, s_0}'+c_{s_3s_2s_0, s_3s_0}'=
                            ({(-1)^3\over \alpha_3\alpha_2s_0(\alpha_0)}+{(-1)^3\over s_3(\alpha_3)s_3(\alpha_2)s_3s_0(\alpha_0)}\big)\big|_{\alpha_0=-\theta}\\
                    &\hspace{3.85cm}={-2\over \alpha_2\theta(\alpha_2+2\alpha_3)};\\
         c_{x, [s_2s_0]}&=c_{s_3s_2s_0, s_2s_0}'=
                            {(-1)^3\over \alpha_3s_2(\alpha_2)s_2s_0(\alpha_0)}\big|_{\alpha_0=-\theta}={1\over \alpha_2\alpha_3(\alpha_1+\alpha_2+2\alpha_3)};\\
         c_{x, [s_3s_2s_0]}&=c_{s_3s_2s_0, s_3s_2s_0}'=
                            {(-1)^3\over s_3(\alpha_3)s_3s_2(\alpha_2)s_3s_2s_0(\alpha_0)}\big|_{\alpha_0=-\theta}={-1\over \alpha_3(\alpha_2+2\alpha_3)(\alpha_1+\alpha_2)};\\ 
          c_{y, [s_0]}&= c_{s_2s_0, s_0}'=
                            {(-1)^2\over  \alpha_2 s_0(\alpha_0)}\big|_{\alpha_0=-\theta}={1\over \alpha_2\theta};\\
  c_{y, [s_2s_0]}&= c_{s_2s_0, s_2s_0}'=
                            {(-1)^2\over  s_2(\alpha_2)s_2 s_0(\alpha_0)}\big|_{\alpha_0=-\theta}={-1\over \alpha_2(\alpha_1+\alpha_2+2\alpha_3)}.
       \end{align*}

 Let $z=wt_{-2\theta^\vee+\lambda}$. Then $N_{u, v}^{w, \lambda}\neq0$  only if $z\in W_{\scriptsize\mbox{af}}^-$ and $\ell(z)=\ell(x)+\ell(y)=5$.
   Note that the only possibilities are $z=s_2s_3s_1s_2s_0$, $s_1s_2s_3s_2s_0$ or $s_0s_2s_3s_2s_0$.

 For $z=s_1s_2s_3s_2s_0=wt_{-2\theta^\vee+\lambda}$, we have $w=s_2s_3s_2$ and $\lambda=\alpha_1^\vee+2\alpha_2^\vee+\alpha_3^\vee$.
 Note that  $d_{z, [{t_1t_2}]}\neq 0$ only if $z\preccurlyeq m_{t_1t_2}$. Thus only $d_{z, [\sigma_\theta t_{-2\theta^\vee}]}$ and $d_{z, [vt_{-2\theta^\vee}]}$
 are nonzero.
 Furthermore, we have
  $$d_{z, [\sigma_\theta t_{-2\theta^\vee}]}=-\alpha_2\theta^2(\alpha_2+\alpha_3)(\alpha_2+2\alpha_3),\quad
   d_{z, [vt_{-2\theta^\vee}]}=\alpha_2\alpha_3(\alpha_2+2\alpha_3)(\alpha_1+\alpha_2+2\alpha_3)^2.$$
  Hence,
    \begin{align*} b_{x, y}^{\, z}&=c_{x, [s_0]}c_{y, [s_0]}d_{z, [\sigma_\theta t_{-2\theta^\vee}]}+c_{x, [s_2s_0]}c_{y, [s_2s_0]}d_{z, [vt_{-2\theta^\vee}]}\\
                 &={2   \alpha_2\theta^2(\alpha_2+\alpha_3)(\alpha_2+2\alpha_3)\over \alpha_2\theta(\alpha_2+2\alpha_3)\cdot \alpha_2\theta}-
                  { \alpha_2\alpha_3(\alpha_2+2\alpha_3)(\alpha_1+\alpha_2+2\alpha_3)^2 \over \alpha_2\alpha_3(\alpha_1+\alpha_2+2\alpha_3)\cdot \alpha_2(\alpha_1+\alpha_2+2\alpha_3)}\\
                  &={2   ( \alpha_2+\alpha_3) \over \alpha_2 }-
                  { \alpha_2+2\alpha_3 \over \alpha_2 }=1.
       \end{align*}
For $z=s_2s_3s_1s_2s_0=wt_{-2\theta^\vee+\lambda}$, we have $w=s_3s_1s_2$ and $\lambda=\alpha_1^\vee+2\alpha_2^\vee+\alpha_3^\vee$.
\begin{align*}
     d_{z, [t_{2\theta^\vee}]}&=-2\theta^3(\alpha_2+\alpha_3)(\alpha_1+\alpha_2+\alpha_3) ;\\
     d_{z, [t_{\theta^\vee+v(-\theta^\vee)}]}&=-\theta(\alpha_2+\alpha_3)(\alpha_1+\alpha_2+\alpha_3)(\alpha_1+\alpha_2+2\alpha_3)^2 ;\\
     d_{z, [t_{v(-2\theta^\vee)}]}&=-\alpha_3(\alpha_1+\alpha_2+2\alpha_3)^2(2\alpha_1^2+2\alpha_1\alpha_2+\alpha_2^2
           +6\alpha_1\alpha_3+4\alpha_2\alpha_3+4\alpha_3^2) ;\\
     d_{z, [t_{u(-\theta^\vee)+\theta^\vee}]}&=-\alpha_2\theta(\alpha_1+\alpha_2)^2(\alpha_1+\alpha_2+\alpha_3) ;\\
     d_{z, [t_{u(-\theta^\vee)+v(-\theta^\vee)}]}&=0.
 \end{align*}
Substituting them in the summation for $b_{x, y}^{\,z}$ and  simplifying, we obtain   $b_{x, y}^{\,z}=1$.

For $z=s_0s_2s_3s_2s_0=wt_{-2\theta^\vee+\lambda}$, we have $w=s_1s_2s_3s_2s_1$ and $\lambda=2\alpha_2^\vee+\alpha_3^\vee$.
\begin{align*}
     d_{z, [t_{2\theta^\vee}]}&=-\theta^3(\alpha_1^2+3\alpha_1\alpha_2+3\alpha_2^2
           +3\alpha_1\alpha_3+6\alpha_2\alpha_3+2\alpha_3^2);\\
     d_{z, [t_{\theta^\vee+v(-\theta^\vee)}]}&=-\theta^2(\alpha_1+\alpha_2+\alpha_3)(\alpha_1+\alpha_2+2\alpha_3)^2;\\
     d_{z, [t_{v(-2\theta^\vee)}]}&=-(\alpha_1+\alpha_2+2\alpha_3)^2(\alpha_1^2+2\alpha_1\alpha_2
           +3\alpha_1\alpha_3+\alpha_2\alpha_3+2\alpha_3^2);\\
     d_{z, [t_{u(-\theta^\vee)+\theta^\vee}]}&=-\theta^2(\alpha_1+\alpha_2)^2(\alpha_1+\alpha_2+\alpha_3);\\
     d_{z, [t_{u(-\theta^\vee)+v(-\theta^\vee)}]}&=-\alpha_1\theta(\alpha_1+\alpha_2)(\alpha_1+\alpha_2+\alpha_3)(\alpha_1+\alpha_2+2\alpha_3).
 \end{align*}
 Substituting them in the summation for $b_{x, y}^{\,z}$ and  simplifying, we obtain   $b_{x, y}^{\,z}=1$.

Hence, we obtain the following

$$\sigma^{s_1s_2s_3s_1s_2}\star\sigma^{s_3s_1s_2s_3s_1s_2}= q_1q_2^2q_3(\sigma^{s_2s_3s_2}+\sigma^{s_3s_1s_2})+q_2^2q_3\sigma^{ s_1s_2s_3s_2s_1}.$$

\section{Appendix}

\subsection{Proofs of   lemmas in section \ref{seccalcustr}  and  Corollary \ref{sum1}}\label{appelemmaproof} In this subsection, we first prove all the lemmas in section \ref{seccalcustr}
 after reviewing some basic facts on
  affine Weyl group (as a Coxeter group). Then we give the proofs of all the lemmas in section \ref{seccalcustr} as well as   Corollary \ref{sum1}.

Recall that $\mathcal{S}=\{\sigma_i~|~i\in I_{\scriptsize\mbox{af}}\}$.  Denote $\mathcal{T}=\{x\sigma_ix^{-1}~|~ x\in W_{\scriptsize\mbox{af}}, \sigma_i\in\mathcal{S}\}=\{\sigma_\gamma~|~\gamma \in R_{\scriptsize\mbox{re}}^+ \}$.
    Let $x, x'\in W_{\scriptsize\mbox{af}}$. We say $x$ covers $x'$, denoted by $x'\rightarrow  x$ or $x'\overset{\sigma_\gamma}{\rightarrow}  x$,  if  there exists some  $\sigma_\gamma\in\mathcal{T}$ such that
     $x=\sigma_\gamma x'$ and $\ell(x)=\ell(x')+1$.
      We say      $x'\preccurlyeq x$ with respect to the  Bruhat order  $(W_{\scriptsize\mbox{af}}, \preccurlyeq)$,
   if  there exists a chain  $ x'=x_1\rightarrow x_2\rightarrow\cdots \rightarrow x_k=x.$ We list some well-known facts (from \cite{hill} and \cite{hump})
     for the Coxeter system
    $(W_{\scriptsize\mbox{af}},\mathcal{S})$ as follows.
 \begin{lemma}\label{rword}  Let   {\upshape $x, y\in W_{\scriptsize\mbox{af}}$} with  {\upshape $x=[\sigma_{\beta_1}\cdots\sigma_{\beta_r}]_{\scriptsize\mbox{red}}$}.
                      Denote $\gamma_k=\sigma_{\beta_1}\cdots\sigma_{\beta_{k-1}}(\beta_{k})$.
     \begin{enumerate}
       \item[{\upshape (a)}]  If $y\, {\rightarrow} \,x$, then there exists a unique $j, 1\leq j\leq r$, such that $x=\sigma_{\gamma_j} y$ and
              {\upshape  $y=[\sigma_{\beta_1}\cdots\sigma_{\beta_{j-1}}\sigma_{\beta_{j+1}}\cdots \sigma_{\beta_r}]_{\scriptsize\mbox{red}}.$}
     \item[{\upshape (b)}]  If $y\preccurlyeq x$, 
                      then {\upshape $y= [\sigma_{\beta_{k_1}}\cdots\sigma_{\beta_{k_s}}]_{\scriptsize\mbox{red}}$} for some subsequence  $(k_1,\cdots, k_s)$, which we call an induced
                      reduced decomposition of $y$ from {\upshape $x=[\sigma_{\beta_1}\cdots\sigma_{\beta_r}]_{\scriptsize\mbox{red}}$}.
                       In particular, if $y\preccurlyeq x$ and $\ell(x)=\ell(y)+1$, then $y\overset{\sigma_{\gamma_j}}{\to}x$ for a unique $j$.
     \item[{\upshape (c)}]{\upshape {(Lifting Property)}}
          Let  $\sigma_i\in \mathcal{S}$. Suppose $\ell(\sigma_ix)>\ell(x)$ and $\ell(\sigma_iy)<\ell(y)$, then the following are equivalent:
   {\upshape $$\mbox{(i)}\,\, \sigma_ix\preccurlyeq y; \qquad \mbox{(ii)}\,\, x\preccurlyeq y;\qquad \mbox{(iii)}\,\, x\preccurlyeq \sigma_iy.$$}
     \item[{\upshape (d)}] {\upshape $\{\gamma_1, \cdots, \gamma_r\} = \{ \gamma\in R^+_{\scriptsize\mbox{re}}~|~ x^{-1}(\gamma)\in -R^+_{\scriptsize\mbox{re}}\}; \qquad\ell(xy)\leq \ell(x)+\ell(y)$.   }
      \item[{\upshape (e)}]     Let {\upshape $\gamma\in R^+_{\scriptsize\mbox{re}}$}.   {\upshape $ \ell(\sigma_\gamma x)\leq\ell(x) \Longleftrightarrow x^{-1}(\gamma)\in  -R^+_{\scriptsize\mbox{re}}$}.

  \end{enumerate}
\end{lemma}

\begin{lemma}[see  e.g.  \cite{mare}]\label{len}
    For any $\gamma\in R^+$,\,\,\,   $\ell(\sigma_\gamma)\leq \langle \gamma^\vee, 2\rho\rangle-1$.
\end{lemma}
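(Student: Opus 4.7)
The plan is to argue by induction on the coroot height $h(\gamma^\vee) := \langle \gamma^\vee, \rho\rangle$ (the sum of the coefficients of $\gamma^\vee$ in the basis of simple coroots), exploiting the standard reflection identity $\sigma_{w(\beta)} = w\sigma_\beta w^{-1}$; applied to a simple reflection $w = \sigma_i$ and $\beta = \sigma_i(\gamma)$ it yields $\sigma_\gamma = \sigma_i\,\sigma_{\sigma_i(\gamma)}\,\sigma_i$, from which $\ell(\sigma_\gamma) \leq \ell(\sigma_{\sigma_i(\gamma)}) + 2$.

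For the base case $h(\gamma^\vee) = 1$, the coroot $\gamma^\vee$ is a simple coroot $\alpha_i^\vee$, so $\gamma = \alpha_i$ is simple. Then $\ell(\sigma_\gamma) = 1$ and $\langle \gamma^\vee, 2\rho\rangle - 1 = 2 - 1 = 1$, so the bound (in fact equality) holds.

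For the inductive step, assume $\gamma$ is not simple. First I would produce a simple root $\alpha_i$ with $\langle \alpha_i^\vee, \gamma\rangle > 0$: writing $\gamma = \sum_j c_j\alpha_j$ with $c_j \geq 0$ and fixing a $W$-invariant inner product $(\cdot,\cdot)$ on $\mathfrak{h}^*$, one has $(\gamma,\gamma) = \sum_j c_j (\alpha_j,\gamma) > 0$ by positive-definiteness, forcing $(\alpha_i,\gamma) > 0$ for some $i$. Since $\langle \alpha_i^\vee,\gamma\rangle = 2(\alpha_i,\gamma)/(\alpha_i,\alpha_i)$ and $\langle \gamma^\vee,\alpha_i\rangle = 2(\alpha_i,\gamma)/(\gamma,\gamma)$ are positive multiples of the same quantity, both are positive integers, hence both $\geq 1$.

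Setting $\gamma' := \sigma_i(\gamma)$, which is a positive root as $\gamma \neq \alpha_i$, one has $(\gamma')^\vee = \sigma_i(\gamma^\vee) = \gamma^\vee - \langle \gamma^\vee, \alpha_i\rangle\alpha_i^\vee$, so $h((\gamma')^\vee) = h(\gamma^\vee) - \langle \gamma^\vee,\alpha_i\rangle < h(\gamma^\vee)$. The induction hypothesis gives $\ell(\sigma_{\gamma'}) \leq \langle (\gamma')^\vee, 2\rho\rangle - 1 = \langle \gamma^\vee, 2\rho\rangle - 2\langle \gamma^\vee,\alpha_i\rangle - 1$. Combining this with $\ell(\sigma_\gamma) \leq \ell(\sigma_{\gamma'}) + 2$ and the bound $\langle \gamma^\vee,\alpha_i\rangle \geq 1$, one obtains $\ell(\sigma_\gamma) \leq \langle \gamma^\vee, 2\rho\rangle - 2\langle \gamma^\vee,\alpha_i\rangle + 1 \leq \langle \gamma^\vee, 2\rho\rangle - 1$, completing the induction. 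The only real care is in producing the simple root $\alpha_i$ and matching the signs of $\langle \alpha_i^\vee, \gamma\rangle$ and $\langle \gamma^\vee, \alpha_i\rangle$, both of which are routine facts from the theory of finite crystallographic root systems.
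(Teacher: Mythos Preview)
Your argument is correct. The paper does not prove this lemma itself; it is stated with a reference to \cite{mare} and used as a black box. Your induction on the coroot height, via the conjugation identity $\sigma_\gamma=\sigma_i\sigma_{\sigma_i(\gamma)}\sigma_i$ and the fact that $\sigma_i$ permutes $R^+\setminus\{\alpha_i\}$, is a clean self-contained proof that could replace the external citation.
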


The following lemma is on the property of  the longest element $\omega_0$ in $W$.

\begin{lemma}\label{lemm55}
  $\omega_0(-\theta)=\theta$.
\end{lemma}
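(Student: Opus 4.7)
The plan is to exploit the classical characterization of the longest element $\omega_0$ as the unique element of $W$ sending $R^+$ to $-R^+$, together with the fact that $\theta$ is the unique maximum of $R^+$ under the dominance order.

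First I would observe that $-\omega_0$ is a linear involution of $\mathfrak{h}^*$ that preserves $R^+$, since $\omega_0(R^+) = -R^+$ and $\omega_0^2 = 1$. Being a $W$-equivariant isometry that maps positive roots to positive roots, $-\omega_0$ must permute the simple roots $\Delta = \{\alpha_i \mid i \in I\}$; in other words, $-\omega_0$ induces an automorphism of the Dynkin diagram. In particular, if $\beta = \sum_i n_i \alpha_i \in R^+$ with all $n_i \geq 0$, then $-\omega_0(\beta) = \sum_i n_i \alpha_{\tau(i)}$ for some permutation $\tau$ of $I$, so $-\omega_0$ preserves the dominance partial order on $R^+$ defined by $\alpha \preceq \beta \iff \beta - \alpha \in \bigoplus_{i \in I} \mathbb{Z}_{\geq 0}\alpha_i$.

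Next I would use the defining property of the highest root: $\theta$ is the unique element of $R^+$ that is maximal with respect to this dominance order. Since $-\omega_0$ is an order-preserving bijection of $R^+$, it must fix this unique maximum, giving $-\omega_0(\theta) = \theta$, i.e., $\omega_0(\theta) = -\theta$. Applying $\omega_0$ to both sides and using $\omega_0^2 = \mathrm{id}$ yields $\omega_0(-\theta) = \theta$, as claimed.

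The argument is essentially immediate once one invokes the two ingredients above, so I do not anticipate any real obstacle; the only subtlety is verifying that the highest root is truly unique (which holds for any simple $\mathfrak{g}$, so the hypothesis that $G$ is simple is used here), and checking that the induced map on $R^+$ does preserve the sum-of-simple-roots grading, which follows because $-\omega_0$ permutes $\Delta$.
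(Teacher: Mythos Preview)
Your argument is correct and follows essentially the same route as the paper: both show that $-\omega_0$ permutes $\Delta$ and then invoke a uniqueness property of $\theta$ (you use that $\theta$ is the unique dominance-maximal positive root; the paper uses that $\theta$ is the unique positive root with $\langle\theta,\alpha_i^\vee\rangle\geq 0$ for all $i$). One small quibble: the phrase ``$W$-equivariant'' is not quite right---$-\omega_0$ does not commute with the $W$-action in general---but the conclusion that $-\omega_0$ permutes $\Delta$ follows simply because any linear bijection of $R^+$ must permute its indecomposable elements, so the argument stands.
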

\begin{proof}
   (We learned the proof from Victor Reiner.)
   The highest root $\theta_0\in R^+$ is characterized among all the positive roots by the property   that $\langle \theta_0, \alpha_i^\vee\rangle\geq 0$ for all
   $\alpha_i$'s.
   Note that $\omega_0(-\theta)$ is a positive roots. Furthermore for any simple root $\alpha_i$, $\ell(\sigma_{\omega_0(\alpha_i)})=\ell(\omega_0\sigma_i\omega_0)=\ell(\omega_0)-\ell(\sigma_i\omega_0)=
               \ell(\omega_0)-(\ell(\omega_0)-\ell(\sigma_i))=1$, which implies that $\omega_0(\alpha_i)=-\alpha_j$ for some $j\in I$.
           Note that $\omega_0=\omega_0^{-1}$.    Hence for $\theta_0=\omega_0(-\theta)$, $           \langle\theta_0, \alpha_i^\vee \rangle=\langle\omega_0(-\theta), \alpha_i^\vee\rangle
            =\langle\theta,-\omega_0(\alpha_i)^\vee\rangle=   \langle\theta , \alpha_j^\vee\rangle\geq0$.
           Thus $\theta_0=\theta$.
\end{proof}

The next lemma is a consequence  of Lemma \ref{len22}
 \begin{lemma}\label{cor1}
 Suppose $\lambda\in Q^\vee$ is anti-dominant and regular.  Then
 for any $w, u\in W$, we have\,\, {\upshape (i)} $wt_\lambda\preccurlyeq t_\lambda$; {\upshape (ii)} $wt_\lambda u\preccurlyeq t_\lambda  $ implies $u=1$.
\end{lemma}
\begin{proof}
  Write $w=[\sigma_{\beta_1}\cdots\sigma_{\beta_r}]_{\scriptsize\mbox{red}}$. Denote $x_j=\sigma_{\beta_j}\cdots\sigma_{\beta_r}t_\lambda$ for each $1\leq j\leq r$.
  Since $\lambda\in \tilde Q^\vee$ is regular, $x_j\in W_{\scriptsize\mbox{af}}^-$ for each $1\leq j\leq r+1$, where we denote $x_{r+1}=t_\lambda$.
     \noindent(i)   $\ell(x_{j+1})=\ell(t_\lambda)-\ell(\sigma_{\beta_{j+1}}\cdots\sigma_{\beta_r})=\ell(t_\lambda)-(r-j)$. Note that $x_{j+1}=\sigma_{\beta_j}x_j$ and
   $\ell(x_{j+1})=\ell(x_j)+1$. Thus $x_j\preccurlyeq x_{j+1}$ for  $1\leq j\leq r$.  Hence, $wt_\lambda=x_1\preccurlyeq x_{r+1}=t_\lambda$.

      \noindent(ii) Note that   $x_j\in W_{\scriptsize\mbox{af}}^-$ for   $1\leq j\leq r+1$. Hence,  $\ell(x_ju)=\ell(x_j)+\ell(u)$,
       $\ell(\sigma_{\beta_j}x_ju)=  \ell(x_{j+1}u) > \ell(x_ju)$ and   $\ell(\sigma_{\beta_j}x_{r+1})=\ell(x_{r+1})-1<\ell(x_{r+1})$.
       Therefore if $x_{j}u\preccurlyeq x_{r+1}$, then $x_{j+1}u=\sigma_{\beta_j}x_ju\preccurlyeq x_{r+1}$  by Lemma \ref{rword}. Hence,
       $x_{1}u=wt_\lambda u\preccurlyeq t_\lambda=x_{r+1}$ implies $t_\lambda u=x_{r+1}u\preccurlyeq x_{r+1}$, by induction on $j$. Thus $\ell(u)=0$ and $u=1$.
\end{proof}

\bigskip

 \begin{proof}[Proof of Lemma \ref{sum3}]
  By Lemma \ref{lemm55},  $t_{\lambda}=t_{-\theta^\vee}t_{\lambda+\theta^\vee}=\omega_0\sigma_\theta t_{-\theta^\vee}\sigma_\theta \omega_0t_{\lambda+\theta^\vee}$.
   Note that for each $i\in I$,
    $\langle \lambda+ \theta^\vee, \alpha_i\rangle\leq -2+\langle\theta^\vee, \alpha_i\rangle\leq-2+1=-1$. Hence, $\lambda+\theta^\vee\in \tilde Q^\vee$ is regular
     and we have $\sigma_\theta \omega_0t_{\lambda+\theta^\vee}\in W_{\scriptsize\mbox{af}}^-$.
   Hence, any reduced decomposition of  $\sigma_\theta \omega_0t_{\lambda+\theta^\vee}$ must be of the form $u_1\sigma_0\cdots u_r\sigma_0$.
  Furthermore,
     \begin{align*} \ell(t_\lambda)&\leq \ell(\omega_0\sigma_\theta t_{-\theta^\vee})+\ell(\sigma_\theta \omega_0t_{\lambda+\theta^\vee}) \\
                                   &= \ell(\omega_0\sigma_0)+\ell(t_{\lambda+\theta^\vee})-\ell(\sigma_\theta \omega_0)\\
                                   &= \ell(\omega_0)+1 + \langle{\lambda+\theta^\vee}, -2\rho\rangle-(\ell(\omega_0)-\ell(\sigma_\theta))\\
                                   &=1+\ell(t_\lambda)-\langle{\theta^\vee}, 2\rho\rangle+\ell(\sigma_\theta)\\
                                   &\leq 1+\ell(t_\lambda)-\langle{\theta^\vee}, 2\rho\rangle+ \langle \theta^\vee, 2\rho\rangle-1
                                    =\ell(t_\lambda).
     \end{align*}
  Hence, all inequalities are indeed equalities. Thus
              $t_\lambda$ admits a reduced decomposition of the form $\omega_0\sigma_0u_1\sigma_0\cdots u_r\sigma_0$.

  For any $w\in W$, we have $wt_\lambda\preccurlyeq t_\lambda$ by Lemma \ref{cor1}.
  Thus there exist a subsequence $(i_1, \cdots, i_k)$ as required. Furthermore any such a sequence gives an expression of $wt_\lambda$
       of the form $wt_\lambda=u_0'\sigma_0u_1'\sigma_0\cdots u_r'\sigma_0$ with $u_0'\preccurlyeq \omega_0$ and
   $u_j'\preccurlyeq u_j$ for each $1\leq j\leq r$.  If $u_j'\neq u_j$ for some $1\leq j\leq r$, then $\ell(u_j')\leq \ell(u_j)-1$.
   Note that $t_\lambda=\omega_0\sigma_0u_1\sigma_0\cdots u_r\sigma_0=w^{-1}u_0'\sigma_0u_1'\sigma_0\cdots u_r'\sigma_0.$ Thus
      \begin{align*}
                     \ell(\omega_0)+ r+1+\sum\nolimits_{1\leq k\leq r}\ell(u_k)&=  \ell (\omega_0\sigma_0u_1\sigma_0\cdots u_{r}\sigma_0)\\
                             &=    \ell( w^{-1} u_0'\sigma_0u_1'\sigma_0\cdots u_{r}'\sigma_0)\\
                               &\leq \ell(w^{-1} u_0')+r+1 +  \sum\nolimits_{1\leq k\leq r}\ell(u_{k}')\\
                             &\leq \ell(\omega_0)+r+1 + \ell(u_j)-1+\sum\nolimits_{k\neq j}\ell(u_{k})\\
                             &=  \ell(\omega_0)+r +\sum\nolimits_{1\leq k\leq r}\ell(u_k).
            \end{align*}
   This is a contradiction. Hence, the statement follows. That is, induced  decomposition(s) of $wt_\lambda$ must be of the form
    $u_0\sigma_0u_1\sigma_0\cdots u_r\sigma_0$ (in which $u_0=w\omega_0=\sigma_{i_1}\cdots \sigma_{i_a}$).
\end{proof}

\bigskip

\begin{proof}[Proof of Lemma \ref{sum88}]
   Write $m_{[z]}=ut_\mu\in W_{\scriptsize\mbox{af}}^-$, then $z=ut_\mu w$ for some $w\in W$.
              Let $w=[\sigma_{\beta_1}\cdots\sigma_{\beta_k}]_{\scriptsize\mbox{red}}$ and denote $\tilde z=ut_\mu\sigma_{\beta_1}\cdots\sigma_{\beta_{k-1}}$.
              Note that
                    $\ell(\sigma_{\beta_k}x^{-1})=
                           \ell(x^{-1})+1>\ell(x^{-1})$
                    and  $\ell(\sigma_{\beta_k}z^{-1})
                 =\ell( z^{-1})-1<\ell(z^{-1})$.
                 Since $x\preccurlyeq z$, $x^{-1}\preccurlyeq z^{-1}$ and therefore
                    $x^{-1}\preccurlyeq  \sigma_{\beta_k}z^{-1}=\tilde z^{-1}$ by (c) of Lemma \ref{rword}. Hence, $x \preccurlyeq  \tilde z$ with $\ell(\tilde z)=\ell(z)-1$.
           Hence,  the first half of the  statement holds  by induction on $\ell(w)$.

  To prove the second half, we recall that $c_{x, [y]}=\sum_{\tilde y\in yW}c_{x, \tilde y}'$. Given $\tilde y\in yW$, we have $\tilde y=ut_\lambda v$ for some $v\in W$.
                   Note that  $c_{x, \tilde y}'\neq 0$ only if $\tilde y\preccurlyeq x$.
                Suppose $\lambda=\mu$ and $\lambda$ is regular, then $ut_\lambda v \preccurlyeq wt_\lambda\preccurlyeq t_\lambda$
                         implies $v=1$ by Lemma \ref{cor1}.
                Suppose $\ell(x)=\ell(y)+1$.  If  $\tilde y\neq y$, then $\ell(\tilde y)>\ell(y)$ and therefore $\tilde y=x$ follows from $\tilde y\preccurlyeq x$,
                        which contradicts to the uniqueness of the minimal length  representative  in each coset.
                        Hence,  $c_{x, [y]}=c_{x, y}'$ if
                 either of the two assumptions holds.
\end{proof}

\bigskip
\begin{proof}[Proof of Lemma \ref{coe99}]
Let $y=[\sigma_{\beta_1}\cdots\sigma_{\beta_k}]_{\scriptsize\mbox{red}}$ and $w=[\sigma_{\beta_{k+1}}\cdots\sigma_{\beta_{k+s}}]_{\scriptsize\mbox{red}}$.
   Then $\beta_{k+1},\cdots, \beta_{k+s}\in \Delta$, $\beta_k=\alpha_0$ and $yw=[\sigma_{\beta_1}\cdots\sigma_{\beta_{k+s}}]_{\scriptsize\mbox{red}}$.
     For any subsequence $J=(j_1, \cdots, j_a)$ of $(1, \cdots, k+s)$, we denote $\sigma_{J}=\sigma_{\beta_{j_1}}\cdots \sigma_{\beta_{j_a}}$.
        Suppose $x=[\sigma_J]_{\scriptsize\mbox{red}}$, then $J$ must be a subsequence of $(1, \cdots, k)$. Therefore, $d_{x, yw}=d_{x, y}$ by definition.
        Indeed, if $J=J_1\bigsqcup J_2$ with
                  $J_2\subset(k+1,\cdots, k+s)$   nonempty and   $J_1\subset (1, \cdots, k)$, then $\sigma_{J_2}\in W$ and
                               $\ell(x(\sigma_{J_2})^{-1})=\ell(\sigma_{J_1}\sigma_{J_2}(\sigma_{J_2})^{-1})=\ell(\sigma_{J_1})\leq |J_1|<|J|=\ell(x)$, which
             contradicts to the fact that $x$ is of minimal length  in the coset $x W$.

\end{proof}

\begin{proof}[Proof of Corollary \ref{sum1}]
  We can assume $\lambda, \mu$ to be regular. (Otherwise, we take any regular $\tau\in \tilde Q^\vee$ and consider $\lambda+\tau, \mu+\tau$.)
           We  claim that  $wt_\mu\preccurlyeq t_\lambda\Longleftrightarrow t_\mu\preccurlyeq t_\lambda$. Hence, the
             statement  follows from Proposition \ref{stem1} immediately.

         Indeed, we have $wt_\mu\preccurlyeq t_\mu$ by Lemma \ref{cor1}. Suppose   $t_\mu\preccurlyeq t_\lambda$, then
           we have  $wt_\mu\preccurlyeq t_\lambda$. 
           Suppose $wt_\mu\preccurlyeq t_\lambda$. Write $w=[\sigma_{\beta_1}\cdots\sigma_{\beta_k}]_{\scriptsize\mbox{red}}$.
          Note that $\ell(\sigma_{\beta_1}wt_\mu)=\ell(t_\mu)-\ell(w)+1>\ell(wt_\mu)$ and $\ell(\sigma_{\beta_1}t_\lambda)=\ell(t_\lambda)-1<\ell(t_\lambda)$.
          Hence,  $\tilde wt_\mu\preccurlyeq t_\lambda$ holds by Lemma \ref{rword}, where $\tilde w=\sigma_{\beta_1}w$ with $\ell(\tilde w)=\ell(w)-1$.
          Thus we can deduce $t_\mu\preccurlyeq t_\lambda$  by induction on $\ell(w)$.
\end{proof}

\subsection{Proofs of Proposition  \ref{coe44} and Proposition \ref{coe88}}\label{appeproposproof}

\begin{lemma}\label{sum66}
 Let {\upshape $x, y, z\in W_{\scriptsize\mbox{af}}^-$} with $x=\sigma_it_\lambda$ and $y=ut_\mu$ where $i\in I$. Let $t_j\in Q^\vee$   and denote $v_jt_{\lambda_j}=m_{[t_j]}$, j=1, 2.
   Suppose $x\succcurlyeq [t_1], y\succcurlyeq [t_2], z\preccurlyeq [t_1t_2]$ and $\ell(z)\geq\ell(x)+\ell(y)$. 
    Then only the following two possibilities can happen,

       {\upshape\textbf{Case A:}}\quad $\ell([t_1t_2])=\ell(x)+\ell(y)+1$;\qquad
            {\upshape\textbf{Case B:}}\quad  $\ell([t_1t_2])=\ell(z)=\ell(x)+\ell(y)$.
\end{lemma}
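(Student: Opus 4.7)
The inequality $\ell([t_1t_2]) \geq \ell(z) \geq \ell(x) + \ell(y)$ is immediate from $z \preccurlyeq m_{[t_1t_2]}$ and the length hypothesis, so the lemma reduces to proving the upper bound $\ell([t_1t_2]) \leq \ell(x) + \ell(y) + 1$. Once this is in hand, Case B corresponds to the extremal equality $\ell([t_1t_2]) = \ell(x) + \ell(y)$, in which case $z = m_{[t_1t_2]}$ is forced by $z \preccurlyeq m_{[t_1t_2]}$ combined with matching lengths, while Case A accommodates the remaining possibility $\ell([t_1t_2]) = \ell(x) + \ell(y) + 1$.

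The strategy is to parallel the proof of Proposition \ref{sum55}, modified to handle the extra Weyl components $\sigma_i$ and $u$ arising in $x = \sigma_i t_\lambda$ and $y = ut_\mu$. Writing $m_{[t_j]} = v_j t_{\lambda_j}$ with $\lambda_j \in \tilde Q^\vee$ (Lemma \ref{len22}(b)) and $m_{[t_1t_2]} = v_3 t_{\lambda_3}$, the defining identity $v_3(\lambda_3) = v_1(\lambda_1) + v_2(\lambda_2)$, combined with the length formula $\ell(m_{[t_j]}) = \langle \lambda_j, -2\rho\rangle - \ell(v_j)$, reduces the analysis to tracking these anti-dominant translations and their Weyl companions. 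The Bruhat hypotheses, via Corollary \ref{sum1} applied to $v_1 t_{\lambda_1} \preccurlyeq t_\lambda$ (since $\sigma_i t_\lambda \preccurlyeq t_\lambda$) and to the corresponding chain for the second factor, yield dominance constraints $\lambda \preccurlyeq \lambda_1$ and $\mu \preccurlyeq \lambda_2$, which drive the subsequent analysis.

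I then perform a case analysis on whether $m_{[t_1]}$ equals $\sigma_i t_\lambda$ and whether $m_{[t_2]}$ equals $u t_\mu$. In the tight subcase with both equalities, $t_1 + t_2 = \sigma_i(\lambda) + u(\mu)$ is explicit, and a direct computation of $\ell([t_1t_2])$ via the translation length formula $\ell(t_\kappa) = \sum_{\alpha \in R^+} |\langle \kappa, \alpha\rangle|$ yields precisely one of the two cases, depending on whether the reflections $\sigma_i$ and $u$ produce a sign mismatch in the $\alpha$-pairings. When at least one of $m_{[t_j]}$ is strictly below the corresponding $x$ or $y$, the length saved in $m_{[t_j]}$ must be absorbed by an enlarged Weyl component $v_3$, still preserving the upper bound.

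The main obstacle is the delicate control of $\ell(v_3)$: since $v_3$ is characterized by the requirement $\lambda_3 = v_3^{-1}(v_1(\lambda_1) + v_2(\lambda_2)) \in \tilde Q^\vee$, estimating $\ell(v_3)$ from below in terms of $v_1$, $v_2$, $\sigma_i$ and $u$ requires careful manipulation of induced reduced decompositions via Lemma \ref{rword}. This technical case-checking, which compares sign patterns of $\alpha$-pairings across the various Weyl components, is where I expect the bulk of the proof in the appendix to lie.
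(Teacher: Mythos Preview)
Your reduction to the upper bound $\ell([t_1t_2]) \leq \ell(x)+\ell(y)+1$ is correct, and you correctly observe that Case B forces $z=m_{[t_1t_2]}$. However, the rest of your plan is vastly more complicated than what is needed, and rests on a misconception: this lemma is \emph{not} relegated to the appendix. Its proof appears immediately after the statement in section~\ref{sec33} and occupies five lines.

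The key idea you are missing is that one should \emph{not} try to identify the minimal representative $v_3t_{\lambda_3}$ of $[t_1t_2]$ or estimate $\ell(v_3)$. Instead, note that $t_1t_2 = v_1t_{\lambda_1}v_1^{-1}\cdot v_2t_{\lambda_2}v_2^{-1}$, so $v_1t_{\lambda_1}v_1^{-1}v_2t_{\lambda_2}$ is itself a representative of the coset $[t_1t_2]$. Hence $\ell([t_1t_2])\leq \ell(v_1t_{\lambda_1}v_1^{-1}v_2t_{\lambda_2})$, and now subadditivity gives
\[
\ell([t_1t_2]) \leq \ell(v_1t_{\lambda_1}) + \ell(v_1^{-1}) + \ell(v_2t_{\lambda_2}) = \ell(t_{\lambda_1}) + \ell(v_2t_{\lambda_2}),
\]
the last equality using $\ell(v_1t_{\lambda_1})=\ell(t_{\lambda_1})-\ell(v_1)$ from Lemma~\ref{len22}(b). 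Since $v_1t_{\lambda_1}\preccurlyeq x\preccurlyeq t_\lambda$ gives $\ell(t_{\lambda_1})\leq \ell(t_\lambda)$, and $v_2t_{\lambda_2}\preccurlyeq y$ gives $\ell(v_2t_{\lambda_2})\leq \ell(y)$, one concludes $\ell([t_1t_2])\leq \ell(t_\lambda)+\ell(y)=\ell(x)+1+\ell(y)$. No case analysis, no sign-pattern bookkeeping, no computation of $v_3$.

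Your proposed case analysis and the ``delicate control of $\ell(v_3)$'' would presumably succeed eventually, but it replaces a one-line triangle-inequality argument with an open-ended combinatorial investigation. The moral is that bounding the length of a coset from above never requires knowing its minimal representative---any representative will do.
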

\begin{proof} Note that $z\preccurlyeq [t_1t_2]$ implies $\ell(z)\leq \ell([t_1t_2])$. Therefore,
                   \begin{align*}
                       \ell(x)+\ell(y)\leq \ell([t_1t_2])
                                                             &=\ell([v_1t_{\lambda_1}v_1^{-1}v_2t_{\lambda_2}])\\
                                                             &\leq \ell(v_1t_{\lambda_1}v_1^{-1}v_2t_{\lambda_2})\\
                                                             &\leq \ell(v_1t_{\lambda_1})+\ell(v_1^{-1})+\ell(v_2t_{\lambda_2})\\
                                                             &= \ell(t_{\lambda_1}) +\ell(v_2t_{\lambda_2})\\
                                                             &\leq \ell(t_\lambda)+ \ell(y)
                                                              =\ell(x)+1+ \ell(y).
                      \end{align*}
                  Hence, only two cases ({\upshape \textbf{Case A}} or
                         {\upshape\textbf{ Case B}}) are possible.
\end{proof}

 \begin{lemma}\label{pos1}
   Under the same  assumptions as in Lemma \ref{sum66}, we assume $\lambda$ is regular. If {\upshape \textbf{Case A}} occurs,  then $v_1t_{\lambda_1}=ut_\lambda$ and
      $v_2t_{\lambda_2}=ut_\mu$. Furthermore, only one of the following three possibilities can happen,
         \begin{enumerate}
           \item[{\upshape a)}] $z=ut_{\lambda+\mu}$;
           \item[{\upshape b)}] there exists $\gamma\in \Gamma_1$ such that $z=u\sigma_\gamma t_{\lambda+\mu}$;
           \item[{\upshape c)}] there exists  $\gamma\in \Gamma_2$ such that $z=u\sigma_\gamma t_{\lambda+\mu+\gamma^\vee}$. 
         \end{enumerate}
 \end{lemma}
 \begin{proof} Since \textbf{Case A} holds, it follows from the proof of Lemma \ref{sum66} that $ \lambda_1=\lambda$ and $v_2t_{\lambda_2}=ut_\mu.$ Furthermore,
      \begin{align*}
                       \ell(x)+\ell(y)+1= \ell([t_1t_2])= \ell([t_2t_1])
                                                             &=\ell([ut_{\mu}u^{-1}v_1t_{\lambda}])\\
                                                             &\leq \ell(ut_{\mu}u^{-1}v_1t_{\lambda})\\
                                                             &\leq \ell(ut_{\mu})+\ell(u^{-1}v_1t_{\lambda})\\
                                                             &= \ell(y) +\ell(t_{\lambda})-\ell(u^{-1}v_1)\\
                                                             &=\ell(y)+\ell(x)+1-\ell(u^{-1}v_1).
                      \end{align*}
        Hence,  $\ell(u^{-1}v_1)=0$ and therefore $v_1=u$.
        Hence, $[t_1t_2]=[ut_{\lambda}u^{-1}ut_{\mu}]=[ut_{\lambda+\mu}]$.

        Note that   $\ell(x)+\ell(y)\leq \ell(z)\leq  \ell(x)+\ell(y)+1=\ell([ut_{\lambda+\mu}])=\ell(ut_{\lambda+\mu})$.

         If $\ell(z)=\ell(x)+\ell(y)+1=\ell(ut_{\lambda+\mu})$, then the condition  $z\preccurlyeq[ut_{\lambda+\mu}]$ implies that  $z=ut_{\lambda+\mu}$. This is just case a).

         If $\ell(z)=\ell(x)+\ell(y)=\ell(ut_{\lambda+\mu})-1$, then the condition  $z\preccurlyeq[ut_{\lambda+\mu}]$ implies that
           $z\overset{\sigma_{\gamma+m\delta}}{\longrightarrow }ut_{\lambda+\mu}$ for some $\gamma+m\delta \in   R^+_{\scriptsize\mbox{re}}.$
        Note that $m\geq 0$ and that $z={\sigma_{\gamma+m\delta}} ut_{\lambda+\mu}=\sigma_\gamma ut_{m u^{-1}(\gamma)^\vee+\lambda+\mu}\in W_{\scriptsize\mbox{af}}^-.$
               Since $\ell({\sigma_{\gamma+m\delta}} ut_{\lambda+\mu})=\ell(z)<\ell(ut_{\lambda+\mu})$, it follows from Lemma \ref{rword} that
                           $(ut_{\lambda+\mu})^{-1}(\gamma+m\delta)=u^{-1}(\gamma)+\big(m+\langle\lambda+\mu, u^{-1}(\gamma)\rangle\big)\delta\in -R^+_{\scriptsize\mbox{re}}.$
          Hence, $m+\langle\lambda+\mu, u^{-1}(\gamma)\rangle\leq 0$.  Since $\lambda+\mu\in \tilde Q^\vee$   and $m\geq 0$, we must have
           $u^{-1}(\gamma)\in R^+$.
         Therefore, 
             \begin{align*} \ell(t_{\lambda+\mu})-\ell(u)-1&=\ell(z)\\
                                                           &=\langle m u^{-1}(\gamma)^\vee+\lambda+\mu, -2\rho\rangle -\ell(\sigma_\gamma u)\\
                                                               &=\ell(t_{\lambda+\mu})-m\langle  u^{-1}(\gamma)^\vee, 2\rho\rangle -\ell(u\sigma_{u^{-1}\gamma})\\
                                                               &\leq\ell(t_{\lambda+\mu})-m\langle  u^{-1}(\gamma)^\vee, 2\rho\rangle -\ell(u)+\ell(\sigma_{u^{-1}(\gamma)}) \\
                                                               &\leq\ell(t_{\lambda+\mu})-m\langle  u^{-1}(\gamma)^\vee, 2\rho\rangle -\ell(u)
                                                                               +\langle {u^{-1}(\gamma)^\vee}, 2\rho\rangle-1.
             \end{align*}
        Hence,  $(m-1)\langle {u^{-1}(\gamma)^\vee}, 2\rho\rangle\leq 0.$ Since $u^{-1}(\gamma)\in R^+$, $\langle {u^{-1}(\gamma)^\vee}, 2\rho\rangle>0$.
    Therefore, $0\leq m\leq 1$; that is, $m=0$ or 1.     Denote $\tilde \gamma=u^{-1}(\gamma)$. Note that $\tilde\gamma\in R^+$.

   If $m=0$, then $\tilde \gamma \in R^+, z=u\sigma_{\tilde \gamma}t_{\lambda+\mu}$ and $\ell(u\sigma_{\tilde \gamma})=\ell(u)+1$. This is just case b).

   If $m=1$, then $\tilde \gamma\in R^+, z=u\sigma_{\tilde \gamma}t_{\lambda+\mu+\tilde \gamma^\vee}$ and $\ell(u\sigma_{\tilde \gamma})
               =\ell(u)+1-\langle  \tilde \gamma^\vee, 2\rho\rangle$.
    This is just case c).
 \end{proof}

  \begin{lemma}\label{pos2}
   Under the same assumptions  as in Lemma \ref{sum66}, we assume that  $\langle \lambda, \alpha_j\rangle< -\ell(\omega_0)$  and $\langle \mu, \alpha_j\rangle< -\ell(\omega_0)$  for all $j\in  I $.
 If {\upshape\textbf{Case B}} occurs, 
     then only one of the following two possibilities can happen,
        \\ {\,\upshape a)}\, there exists $\gamma\in \Gamma_1$ such that  
             $v_1t_{\lambda_1}=u\sigma_\gamma t_\lambda$,  $v_2t_{\lambda_2}=u\sigma_\gamma t_\mu$,  $z=u\sigma_\gamma t_{\lambda+\mu}$; 
         \\{\,\upshape b)}\, there exists $\gamma\in \Gamma_2$ such that
                      $v_1t_{\lambda_1}=u\sigma_\gamma t_\lambda$, $v_2t_{\lambda_2}=u\sigma_\gamma t_{\mu+\gamma^\vee}$,
                    $z=u\sigma_\gamma t_{\lambda+\mu+\gamma^\vee}.$ 
 \end{lemma}
\begin{proof}
  Note that we have $z=[t_1t_2]$ in this case. Since $v_1t_{\lambda_1}\preccurlyeq \sigma_it_\lambda\preccurlyeq t_\lambda$, $\lambda\preccurlyeq \lambda_1$ by Corollary \ref{sum1}.
           Hence, $\ell(t_{\lambda_1})=\ell(t_\lambda)-2M$ for some $M\geq 0$. Therefore,
            \begin{align*}
                       \ell(x)+\ell(y)= \ell([t_1t_2])
                                      &=\ell([v_1t_{\lambda_1}v_1^{-1}v_2t_{\lambda_2}])\\
                                                             &\leq \ell(v_1t_{\lambda_1}v_1^{-1}v_2t_{\lambda_2})\\
                                                             &\leq \ell(v_1t_{\lambda_1})+\ell(v_1^{-1})+\ell(v_2t_{\lambda_2})\\
                                                             &= \ell(t_{\lambda_1}) +\ell(v_2t_{\lambda_2})\\
                                                             &\leq \ell(t_{\lambda_1})+ \ell(y)
                                                              =\ell(x)+1-2M+ \ell(y).
              \end{align*}
     Hence, $M=0, \,\, \lambda_1=\lambda$ and \,\,\,
          $\ell(y)\geq \ell(v_2t_{\lambda_2})\geq \ell(x)+\ell(y)-\ell(t_{\lambda_1})=\ell(y)-1. $
          \\     Hence, there are only the following two possibilities.
           \\ Case \,(\,\!i\,\!):\quad $\ell(v_2t_{\lambda_2})=\ell(y)$, which implies that $v_2t_{\lambda_2}=y=ut_\mu$.
           \\ Case (ii):\quad $\ell(v_2t_{\lambda_2})=\ell(y)-1$.

        Due to Lemma \ref{pos3} as below, Case (i) is impossible. It remains to discuss Case (ii).
         In this case,
           \begin{align*}
                       \ell(x)+\ell(y)= \ell([t_2t_1])
                                      &=\ell([v_2t_{\lambda_2}v_2^{-1}v_1t_{\lambda}])\\
                                                             &\leq \ell(v_2t_{\lambda_2}v_2^{-1}v_1t_{\lambda})\\
                                                             &\leq \ell(v_2t_{\lambda_2})+\ell(v_2^{-1} v_1t_{\lambda})\\
                                                             &= \ell(y)-1+\ell(t_{\lambda})- \ell(v_2^{-1} v_1)
                                                              =\ell(y)+\ell(x)- \ell(v_2^{-1} v_1).
              \end{align*}
          Hence, $\ell(v_2^{-1} v_1)=0$ and therefore $v_1=v_2$.

           Since $\ell(v_2t_{\lambda_2})=\ell(y)-1$ and $v_2t_{\lambda_2}\preccurlyeq y$, there exists
              $  \gamma+m\delta \in   R^+_{\scriptsize\mbox{re}}$ such that $v_2t_{\lambda_2}=\sigma_{\gamma+m\delta}ut_{\mu}.$ With the same
                discussion as in the proof of  Lemma \ref{pos1},  we have $\tilde \gamma=u^{-1}(\gamma)\in R^+$ and
                $ v_2t_{\lambda_2}=u\sigma_{\tilde \gamma}t_{\mu+m\tilde\gamma^\vee}\in W^-_{\scriptsize\mbox{af}}.$
                 Hence, $v_1=v_2=u\sigma_{\tilde\gamma}$ and $z=u\sigma_{\tilde\gamma}t_{\lambda+\mu+m\tilde \gamma^\vee}\in W^-_{\scriptsize\mbox{af}}$.
        With the same argument as in the proof of Lemma \ref{pos1} again,
         we can deduce that either $m=0$ or $m=1$.

         If $m=0$, then we have $\tilde\gamma\in R^+$ such that case a) holds; that is,
          $$v_1t_{\lambda_1}=u\sigma_{\tilde\gamma} t_\lambda;\quad v_2t_{\lambda_2}=u\sigma_{\tilde\gamma} t_\mu;
                    \quad  z=u\sigma_{\tilde\gamma} t_{\lambda+\mu};\qquad \ell(u\sigma_{\tilde\gamma})=\ell(u)+1.$$

        If $m=1$, then we have $\tilde\gamma\in R^+$ such that
          $v_1t_{\lambda_1}=u\sigma_{\tilde\gamma} t_\lambda,\,\, v_2t_{\lambda_2}=u\sigma_{\tilde\gamma} t_{\mu+\tilde \gamma^\vee}$,
                    \,\,  $z=u\sigma_{\tilde\gamma} t_{\lambda+\mu+\tilde \gamma^\vee}$ and  $\ell(u\sigma_{\tilde\gamma})=\ell(u)+1-\langle  \tilde\gamma^\vee, 2\rho\rangle;$
          that is,  case b) holds.
\end{proof}

 \begin{remark}\label{rem55}
  The condition  ``$\langle \mu, \alpha_j\rangle  <-\ell(\omega_0)$ for all $j\in I$" does imply that
   {\upshape $u\sigma_\gamma t_{\mu+\gamma^\vee}, u\sigma_\gamma t_{\lambda+\mu+\gamma^\vee}\in W^-_{\scriptsize\mbox{af}}$,} whenever $\gamma\in \Gamma_2$ and $\lambda\in\tilde Q^\vee$.

 Indeed, the statement can be checked directly for the case $|I|=n=1, 2$.  For any $\gamma\in R^+$,
       write $\gamma^\vee=\sum_ia_i\alpha_i^\vee$. Note that $\ell(\omega_0)=|R^+|\geq 9$ and $a_i\leq 4$ if $n=3, 4$, and that $\ell(\omega_0)>12$ and $a_i\leq 6$
          if $n\geq 5$ (see e.g. page 66 of \cite{hum}). Hence,  $\mu+\gamma^\vee\in \tilde Q^\vee$ is regular if $n\geq 3$.
  In particular, the statement holds.
 \end{remark}

 \begin{lemma}\label{pos3}
         {\upshape Case (i)} in the proof of Lemma \ref{pos2} can never occur.
 \end{lemma}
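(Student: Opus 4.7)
The plan is to assume Case (i) holds and derive a contradiction. Under this assumption $\lambda_1=\lambda$, $v_2=u$, $\lambda_2=\mu$, and $v_1t_\lambda\preccurlyeq\sigma_it_\lambda$. Since each $m_{[t_i]}=v_it_{\lambda_i}$ is the length-minimizing representative of $t_iW$, the translation $t_i$ equals $t_{v_i(\lambda_i)}$, so $t_1t_2=t_{v_1(\lambda)+u(\mu)}$, which is $W$-conjugate to $t_{u^{-1}v_1(\lambda)+\mu}$. Write $w':=u^{-1}v_1$. The goal is to contradict the Case B identity $\ell([t_1t_2])=\ell(x)+\ell(y)=\langle\lambda+\mu,-2\rho\rangle-1-\ell(u)$ in both sub-cases $w'=1$ and $w'\neq 1$.

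First I would rule out $w'=1$. If $v_1=u$, then $[t_1t_2]=[t_{\lambda+\mu}]$; since $\lambda+\mu\in\tilde Q^\vee$ is regular, Lemma \ref{len22}(b) shows that the only representative of this coset in $W_{\scriptsize\mbox{af}}^-$ is $t_{\lambda+\mu}$ itself, giving $\ell([t_1t_2])=\langle\lambda+\mu,-2\rho\rangle$. The required equality then collapses to $1+\ell(u)=0$, which is absurd.

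For the remaining case $w'\neq 1$, I would bound $\ell([t_1t_2])\leq\ell(t_{w'(\lambda)+\mu})$: the first inequality holds because the length of a coset is at most the length of any representative, and the equality $\ell(t_{v_1(\lambda)+u(\mu)})=\ell(t_{w'(\lambda)+\mu})$ follows from Lemma \ref{len22}(a), since $\ell(t_\eta)$ depends only on the $W$-orbit of $\eta$. Then I would adapt the estimate in the proof of Lemma \ref{lemm} to the weaker hypothesis of Proposition \ref{pos2}. Expanding $w'(\lambda)$ (or, if $w'(\lambda)+\mu\notin\tilde Q^\vee$, the auxiliary $w$ taking it to antidominant form) in a reduced expression, pairing with $-2\rho$, and using $\langle\gamma^\vee,2\rho\rangle\geq 2$ for every $\gamma\in R^+$, the hypotheses $\langle\lambda,\alpha_j\rangle<-\ell(\omega_0)$ and $\langle\mu,\alpha_j\rangle<-\ell(\omega_0)$ upgrade the non-strict inequality of Lemma \ref{lemm} to a strict one, yielding
$$\ell(t_{w'(\lambda)+\mu})<\langle\lambda+\mu,-2\rho\rangle-2\ell(\omega_0).$$

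Combining this bound with $\ell(x)+\ell(y)\geq\langle\lambda+\mu,-2\rho\rangle-1-\ell(\omega_0)$ (which uses $\ell(u)\leq\ell(\omega_0)$), the Case B equality forces $\ell(\omega_0)<1$, contradicting $W\neq\{1\}$. The main obstacle I anticipate is the bookkeeping of the two sub-cases in the bound, namely whether $w'(\lambda)+\mu$ is already antidominant or not, since the proof of Lemma \ref{lemm} packages them together under a stronger hypothesis; one must separately verify that the strict inequality survives when the argument is applied symmetrically to $w'$ itself.
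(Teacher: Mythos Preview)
Your proposal is essentially correct but differs from the paper's argument in how the case $w'=u^{-1}v_1\neq 1$ is handled. One small slip: when $w'=1$ you write $[t_1t_2]=[t_{\lambda+\mu}]$, but in fact $t_1t_2=t_{u(\lambda+\mu)}$, so the minimal coset representative is $ut_{\lambda+\mu}$, not $t_{\lambda+\mu}$. This does not affect the conclusion, since the correct equality $\ell([t_1t_2])=\langle\lambda+\mu,-2\rho\rangle-\ell(u)$ still contradicts $\ell(x)+\ell(y)=\langle\lambda+\mu,-2\rho\rangle-1-\ell(u)$ (you get $0=-1$ rather than your $1+\ell(u)=0$).

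For $w'\neq 1$, the paper does not invoke a Lemma~\ref{lemm}--style estimate. Instead it first bounds
\[
\ell(x)+\ell(y)=\ell([t_2t_1])\leq \ell(ut_\mu)+\ell(u^{-1}v_1t_\lambda)=\ell(y)+\ell(t_\lambda)-\ell(u^{-1}v_1),
\]
which forces $\ell(w')\leq 1$, so $w'=\sigma_j$ is a simple reflection. It then computes $\ell([t_1t_2])$ directly using $t_\mu\sigma_j=\sigma_j t_{\mu-\langle\mu,\alpha_j\rangle\alpha_j^\vee}$ (or the symmetric identity with $\lambda$), and the single factor $2\langle\mu,\alpha_j\rangle<-2\ell(\omega_0)$ already yields the contradiction. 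Your route---bounding $\ell([t_1t_2])\leq\ell(t_{w'(\lambda)+\mu})$ and then adapting the Lemma~\ref{lemm} argument---works too, and has the virtue of treating all $w'\neq 1$ uniformly, at the cost of the extra bookkeeping you flag (the two sub-cases of whether $w'(\lambda)+\mu$ is already anti-dominant). The paper's preliminary reduction to $\ell(w')\leq 1$ avoids that bookkeeping entirely and makes the final computation a one-liner.
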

 \begin{proof}
         Assume Case (i) holds, then we have $\lambda_1=\lambda, v_2t_{\lambda_2}=ut_\mu$ and
           \begin{align*}
                      \ell(x)+\ell(y) =  \ell([t_2t_1])
                                                             &=\ell([ut_{\mu}u^{-1}v_1t_{\lambda}])\\
                                                             &\leq \ell(ut_{\mu}u^{-1}v_1t_{\lambda})\\
                                                             &\leq \ell(ut_{\mu})+\ell(u^{-1}v_1t_{\lambda})\\
                                                             &= \ell(y) +\ell(t_{\lambda})-\ell(u^{-1}v_1)
                                                             =\ell(y)+\ell(x)+1-\ell(u^{-1}v_1).
              \end{align*}
    Therefore, we have either $\ell(u^{-1}v_1)=0$ or $\ell(u^{-1}v_1)=1$.

  For the former case, we have $v_1=u$, and therefore $\ell(x)+\ell(y)=\ell([t_1t_2])=\ell(ut_{\lambda+\mu})=\ell(x)+\ell(y)+1$.
  This is a contradiction.

  For the latter case, $v_1=u\sigma_j$ for some $j$. 
    If $\langle \lambda, \alpha_j\rangle \leq \langle \mu, \alpha_j\rangle$, then $\lambda+\mu-\langle \mu, \alpha_j\rangle\alpha_j^\vee\in\tilde Q^\vee$. 
   Note that the integer  $\langle \mu, \alpha_j\rangle  <-\ell(\omega_0)$.  Therefore
     \begin{align*} \ell(x)+\ell(y) =  \ell([t_2t_1])
                                                             &=\ell([ut_{\mu}u^{-1}u\sigma_jt_{\lambda}])\\
                                                             &\leq \ell(ut_{\mu}\sigma_jt_{\lambda})\\
                                                             &=\ell(u\sigma_jt_{\mu-\langle \mu, \alpha_j\rangle\alpha_j^\vee+\lambda})\\
                                                             &\leq\ell(u\sigma_j)+ \ell(t_{\mu-\langle \mu, \alpha_j\rangle\alpha_j^\vee+\lambda})\\
                                                             &=\ell(u\sigma_j)+ \langle \mu-\langle \mu, \alpha_j\rangle\alpha_j^\vee+\lambda, -2\rho\rangle\\
                                                             &=\ell(u\sigma_j)+ \ell(t_{\lambda+ \mu})+2\langle \mu, \alpha_j\rangle\\
                                                             &\leq\ell(\omega_0)+\ell(x)+1+\ell(y)+\ell(u)-2\ell(\omega_0)-2
                                                              <\ell(x)+\ell(y).
     \end{align*}
  If  $\langle \lambda, \alpha_j\rangle > \langle \mu, \alpha_j\rangle$, then $\lambda+\mu-\langle \lambda, \alpha_j\rangle\alpha_j^\vee\in\tilde Q^\vee$.
   Therefore,
        \begin{align*} \ell(x)+\ell(y) =  \ell([t_1t_2])
                                                             &=\ell([u\sigma_jt_{\lambda}\sigma_ju^{-1}ut_{\mu}])\\
                                                             &\leq \ell(u\sigma_jt_{\lambda}\sigma_jt_{\mu})\\
                                                             &=\ell(ut_{\lambda-\langle \lambda, \alpha_j\rangle\alpha_j^\vee+\mu})\\
                                                             &\leq\ell(u)+ \ell(t_{\lambda-\langle \lambda, \alpha_j\rangle\alpha_j^\vee+\mu})\\
                                                             &=\ell(u)+ \langle \lambda-\langle \lambda, \alpha_j\rangle\alpha_j^\vee+\mu, -2\rho\rangle\\
                                                             &=\ell(u)+ \ell(t_{\lambda+ \mu})+2\langle \lambda, \alpha_j\rangle\\
                                                             &\leq\ell(\omega_0)+\ell(x)+1+\ell(y)+\ell(u)-2\ell(\omega_0)-2
                                                              <\ell(x)+\ell(y).
     \end{align*}
    Both cases deduce contradictions.
   Hence, Case (i) is impossible.
  \end{proof}

\bigskip

\begin{proof} [Proof of Proposition \ref{coe44}] Note that $\mathfrak{S}_x\mathfrak{S}_y
          = \sum_{z\in W_{\scriptsize\mbox{af}}^-}\sum_{t_1, t_2}c_{x, [t_1]}c_{y, [t_2]}d_{z, [t_1t_2]}\mathfrak{S}_z$, where the only
  nonzero terms are those satisfying $x\succcurlyeq [t_1], y\succcurlyeq [t_2], z\preccurlyeq [t_1t_2]$ and $\ell(z)\geq\ell(x)+\ell(y)$.
   Therefore,  our result follows from  Lemma \ref{pos1}, Lemma \ref{pos2} and Lemma \ref{sum88} immediately.
\end{proof}

 \bigskip

\begin{proof}[Proof of Proposition \ref{coe88}]

  $d_{\sigma_i, u}=w_i-u(w_i)$ holds by expanding the right side (with respect to a reduced expression of $u$) and comparing both sides.
  It follows  from the definition, Lemma  \ref{rword} and Lemma \ref{sum3} that
        $d_{w, w}=\prod_{\gamma\in R^+\atop w^{-1}(\gamma)\in -R^+}\gamma$ and
        $d_{wt_\lambda, wt_\lambda}=d_{w\omega_0, w\omega_0} \cdot\prod\limits_{j=r+1}^m w(H_j)
                        = \Big(\prod_{\gamma\in R^+\atop \omega_0 w^{-1}(\gamma)\in -R^+}\gamma\Big)\cdot\prod\limits_{j=r+1}^m w(H_j).$
    Note that  $\omega_0$ is an involution that maps $-R^+$ to $R^+$, that
      $\{\gamma\in R^+~|~ w(\gamma)\in R^+\}$ is $w$-invariant  and that
                      $\prod_{i=1}^r H_i= \prod_{\beta\in R^+}\beta$. Hence,
                    \begin{align*}
                            w(d_{w^{-1}, w^{-1}})\cdot d_{wt_{\lambda+\mu}, wt_{\lambda+\mu} }
                                   &= w(\prod_{\gamma\in R^+\atop w(\gamma)\in -R^+}\gamma)
                                             \Big(\prod_{\gamma\in R^+\atop w^{-1}(\gamma)\in R^+}\gamma\Big)\cdot\prod_{j=r+1}^{m+p}w(H_j) \\
                                  &=w\big(\prod_{\gamma\in R^+\atop w(\gamma)\in -R^+} \gamma\big)
                                         \cdot w\big(\prod_{\gamma\in R^+\atop w(\gamma)\in R^+}\gamma\big)\cdot\prod_{j=r+1}^{m+p}w(  H_j) \\ 
                                  &= w(\prod_{\gamma\in R^+}\gamma) \cdot\prod_{j=r+1}^{m+p}w( H_j) \\
                                 &= w(\prod_{j=1}^{r}  H_j )\cdot\prod_{j=r+1}^{m+p}w(  H_j)
                                  =\prod_{j=1}^{m+p}w(H_j).\hspace{1.6cm}
                    \end{align*}

   Let $ut_{\mu}=[\sigma_{\beta_{i_1}}\cdots \sigma_{\beta_{i_k}}]_{\scriptsize\mbox{red}}$,
      $ut_{\lambda+\mu}=[\sigma_{\beta_{i_1}}\cdots \sigma_{\beta_{i_s}}]_{\scriptsize\mbox{red}}$ ($k<s$) and denote $\gamma_j=\sigma_{\beta_{i_1}}\cdots \sigma_{\beta_{i_{j-1}}}(\beta_{i_j})$.
  Note that  $u\sigma_\gamma t_{\mu+\gamma^\vee}, u\sigma_\gamma t_{\lambda+\mu+\gamma^\vee}\in W^-_{\scriptsize\mbox{af}}$ whenever $\gamma\in \Gamma_2$, by Remark \ref{rem55}.
   Therefore for $\gamma\in \Gamma_2$, we have $\ell(u\sigma_\gamma t_{\lambda+\mu+\gamma^\vee})=\ell(ut_{\lambda+\mu})-1$ and $\ell(u\sigma_\gamma t_{\mu+\gamma^\vee})=\ell(ut_{\mu})-1$.
  Note that  $u(\gamma+\delta)= u(\gamma)+\delta\in R^+_{\scriptsize\mbox{re}}$ and
   $u\sigma_\gamma t_{\lambda+\mu+\gamma^\vee}=\sigma_{u(\gamma+\delta)}ut_{\lambda+\mu}$. 
   Hence,  there is a unique $1\leq j\leq s$
        such that $u\sigma_\gamma t_{\lambda+\mu+\gamma^\vee} =[\sigma_{\beta_{i_1}}\cdots\widehat{\sigma_{\beta_{i_j}}}\cdots \sigma_{\beta_{i_s}}]_{\scriptsize\mbox{red}}$ and
        $ {\gamma_j}=u(\gamma +\delta) $ by Lemma \ref{rword}. As a consequence, $d_{u\sigma_\gamma t_{\lambda+\mu+\gamma^\vee}, ut_{\lambda+\mu}}={1\over \gamma_j}\prod_{a=1}^s\gamma_a
                ={1\over u(\gamma+\delta)}d_{u  t_{\lambda+\mu}, ut_{\lambda+\mu}}$.
                Hence,  (2)   holds. Similarly, (1) also holds.

       With the same argument as above,
          there is a unique $1\leq j\leq k$ such that $\gamma_j=u(\gamma+\delta)$ and
           $u\sigma_\gamma t_{\mu+\gamma^\vee} =\sigma_{\beta_{i_1}}\cdots\widehat{\sigma_{\beta_{i_j}}}\cdots \sigma_{\beta_{i_k}}$.
         Denote $(a_1, \cdots, a_{k-1})=(i_1, \cdots, \hat i_j, \cdots, i_k)$ and denote $\tilde \gamma_b=\sigma_{\beta_{a_1}}\cdots \sigma_{\beta_{a_{b-1}}}(\beta_{a_b})$.
         Immediately, we have 
           $c_{ut_\mu, u\sigma_\gamma t_{\mu+\gamma^\vee}}=-(\gamma_j\prod_{b=1}^{k-1}\tilde \gamma_b)^{-1}$ and
           $d_{u\sigma_\gamma t_{\mu+\gamma^\vee}, u\sigma_\gamma t_{\mu+\gamma^\vee}}=\prod_{b=1}^{k-1}\tilde \gamma_b$
           by definition.  Therefore, (3) also holds by the following observation \\
        $\mbox{}$ \qquad $d_{u\sigma_\gamma t_{\lambda+\mu+\gamma^\vee}, u\sigma_\gamma t_{\lambda+\mu+\gamma^\vee}}=
               d_{u\sigma_\gamma t_{\mu+\gamma^\vee}, u\sigma_\gamma t_{\mu+\gamma^\vee}}\cdot \prod_{j=1}^m u\sigma_\gamma t_{\mu+\gamma^\vee}(H_j)$.
\end{proof}

\subsection{Equivariant quantum cohomology of $X=G/B$} \label{appendequiquantum}

 The Lie group $G$ possesses a so-called   Bruhat decomposition $G= \bigsqcup_{w\in W} {B}\dot w {B}$,  labelled by elements in the Weyl group $W$.
  It induces a decomposition
    of $X:=G/B$ into Schubert cells: $X=\bigsqcup_{w\in W}B\dot wB/B$, in which $B\dot w B/B \cong \mathbb{C}^{\ell(w)}$.
     The closures $X_{w}:=\overline{B\dot w B/B}$ are called Schubert varieties in $X$. Let
      $\sigma_w$ denote the image of the fundamental class $[X_w]$ under the canonical map
      $H_*(X_w, \mathbb{Z})\rightarrow H_*(X, \mathbb{Z})$. Then $\sigma_w\in H_{2\ell(w)}(X, \mathbb{Z})$ and
      these Schubert homology classes $\sigma_w$'s form an additive basis of
      $H_*(X, \mathbb{Z})$. The cohomology group $H^*(X, \mathbb{Z})$ also  has an additive basis of Schubert cohomology classes
       $\sigma^w$'s such that  $\langle\sigma_u, \sigma^v\rangle=\delta_{u, v}$ for any $u, v\in W$. If we write $g^{u, v}=\int_{[X]}\sigma^u\cup \sigma^v$,
  then the matrix  $\big(g^{u, v}\big)$ is invertible with its inverse denoted as
    $\big(g_{u, v}\big)=\big(g^{u, v}\big)^{-1}$.

 For each $i\in I$, we denote $s_i=\sigma_{\alpha_i}$ and introduce a formal variable $q_i$.
   Identify      $ H_2(X, \mathbb{Z})=\bigoplus_{i\in I}\mathbb{Z}\sigma_{s_{i}}$ with $Q^\vee$ via
          $\beta=\sum_id_i\sigma_{s_i}\mapsto\lambda_\beta=\sum_id_i\alpha_i^\vee$. Denote $q_{\lambda_\beta}=q^\beta=\prod_{i\in I}q_i^{d_i}$.

 Let $\overline{\mathcal{M}}_{0, m}(X, \beta)$ be the Kontsevich's moduli space of stable maps of degree $\beta$ of $m$-pointed genus 0 curves into $X$ (see \cite{fupa}).
 Let $\mbox{ev}_i$ denote the $i$-th canonical evaluation map $\mbox{ev}_i: \overline{\mathcal{M}}_{0, m}(X, \beta)\to X$ given by
    $\mbox{ev}_i([f: C\to X; p_1, \cdots, p_m])=f(p_i)$. The genus zero Gromov-Witten invariant for $\gamma_1,\cdots, \gamma_m\in H^*(X)=H^*(X, \mathbb{Q})$ is defined as
         $$I_{0, m, \beta}(\gamma_1, \cdots, \gamma_m)=\int_{\overline{\mathcal{M}}_{0, m}(X, \beta)}\mbox{ev}_1^* (\gamma_1)\cup\cdots
                            \cup \mbox{ev}_m^*(\gamma_m). $$

  The
   (small) quantum product  for $a, b\in H^*(X)$ is a deformation of the cup product defined as follows.
   $$a\star b= \sum_{u, v\in W; \beta\in H_2(X, \mathbb{Z})} I_{0, 3, \beta}  (a, b, \sigma^u)g_{u, v}\sigma^v q^{\beta}  .$$

 The  $\mathbb{Q}[\mathbf{q}]$-module
         $H^*(X)[\mathbf{q}]:=H^*(X)\otimes\mathbb{Q}[\mathbf{q}]$  equipped with $\star$ is called the
      small quantum cohomology ring of $X$ and denoted as  $QH^*(X)$. So the same Schubert classes $\sigma^u=\sigma^u\otimes 1$ form
      a basis for $QH^*(X)$ over $\mathbb{Q}[\mathbf{q}]$ and we write
                $$\sigma^u\star \sigma^v = \sum_{w\in W, \lambda\in Q^\vee}    N_{u,v}^{w, \lambda}q_{\lambda}\sigma^w.$$
  The coefficients $N_{u, v}^{w, \lambda}$'s are called the \textit{quantum Schubert structure constants.}
  In fact, $\sum_{v_1\in W}g_{v_1, w}\sigma^{v_1}=\sigma^{\omega_0w}$ (see e.g. \cite{fw}). Compared with the original definition of quantum product,
   the quantum Schubert structure constant $N_{u, v}^{w, \lambda}$ is exactly equal to the (3-pointed genus zero) Gromov-Witten invariant
    $I_{0, 3, \lambda}(\sigma^u, \sigma^v, \sigma^{\omega_0w})$.   When $\lambda=0$, they give the classical Schubert structure constants  for $H^*(X)$.
 The $T$-action on $X$ induces an action on the
  moduli space   $\overline{\mathcal{M}}_{0,3}(X, \beta)$ given by:
 $t\cdot (f: C\rightarrow X; p_1, p_2,  p_3) = ( f_t:C\rightarrow X; p_1,p_2, p_3)$
 where $ f_t(x) := t\cdot f(x)$.  The evaluation maps $\mbox{ev}_i$'s are
  $T$-equivariant.
   We use the same notation $\sigma^u$ to denote the equivariant Schubert class in $H^*_T(X)$. The
     equivariant Gromov-Witten invariant is defined as $I^T_{0, 3, \beta}(\sigma^u, \sigma^v, \sigma^w)=\pi_*^T(\mbox{ev}_1^T(\sigma^u)\cdot \mbox{ev}_2^T(\sigma^v)\cdot
               \mbox{ev}_3^T(\sigma^w))$, where $\pi_*^T$ is the equivariant Gysin push forward. As a consequence, the equivariant (small) quantum product
                 $\star_T$ is defined (see e.g. \cite{mih}).
  The equivariant   quantum cohomology ring  $QH^*_T(X)=(H^*(X)[\mathbf{\alpha},\mathbf{q}], \star_T)$ is commutative and associative, which has an $S[\mathbf{q}]$-basis of Schubert classes
     with $S[\mathbf{q}]=\mathbb{Q}[\alpha_1, \cdots, \alpha_n, q_1, \cdots, q_n]$. Furthermore,
      $$\sigma^u\star_T \sigma^v = \sum_{w\in W, \lambda\in Q^\vee}     \tilde N_{u,v}^{w, \lambda} q_{\lambda}\sigma^w,\quad \mbox{where }    \tilde N_{u,v}^{w, \lambda}= \tilde N_{u,v}^{w, \lambda}(\alpha)\in S=\mathbb{Q}[\alpha_1,\cdots,\alpha_n].$$

\subsection{Equivariant cohomology of $\Omega K$}\label{equi}

  The affine Kac-Moody group $\mathcal{G}$  possesses a Bruhat decomposition $ \bigsqcup_{x\in W_{\scriptsize \mbox{af}}}\mathcal{B}x\mathcal{B}$,
       where the canonical identification $W_{\scriptsize \mbox{af}}\cong N(\hat T_\mathbb{C})/\hat T_\mathbb{C}$ is used.
         Here  $\hat T_\mathbb{C}=\mbox{Hom}_\mathbb{Z}(\hat{\mathfrak{h}}_\mathbb{Z}^*, \mathbb{C}^*)$ denotes the standard maximal torus of $\mathcal{G}$,
       in which $\hat{\mathfrak{h}}_\mathbb{Z}$ is the integral form of $\hat{\mathfrak{h}}=\mathfrak{h}_{\scriptsize \mbox{af}}$ (see e.g. chapter 6 of \cite{kumar}).
     The Bruhat decomposition of $\mathcal{G}$ induces a decomposition
    of $\mathcal{G}/\mathcal{P}_Y$ into Schubert cells: $\mathcal{G}/\mathcal{P}_Y=\bigsqcup_{x\in W_{\scriptsize \mbox{af}}^{Y}}\mathcal{B}x\mathcal{P}_Y/\mathcal{P}_Y.$
      Schubert varieties  are the closures of $\mathcal{B}x\mathcal{P}_Y/\mathcal{P}_Y$'s in $\mathcal{G}/\mathcal{P}_Y$. Let
      $\mathfrak{S}_x^Y$ denote the image of the fundamental class $[\overline{\mathcal{B}x\mathcal{P}_Y/\mathcal{P}_Y}]$ under the canonical map
      $H_*(\overline{\mathcal{B}x\mathcal{P}_Y/\mathcal{P}_Y})\rightarrow H_*(\mathcal{G}/\mathcal{P}_Y)$. Then
      $H_*(\mathcal{G}/\mathcal{P}_Y, \mathbb{Z})$ has an additive basis of Schubert homology classes $\{\mathfrak{S}_x^Y~|~x\in {W}_{\scriptsize\mbox{af}}^Y\}$. We denote $\mathfrak{S}_x=\mathfrak{S}^Y_x$
      wherever there is no confusion. 
      Similarly, the cohomology group $H^*(\mathcal{G}/\mathcal{P}_Y, \mathbb{Z})$ has an additive basis of Schubert cohomology classes $\{\mathfrak{S}^x~|~x\in W_{\scriptsize \mbox{af}}^Y\}$,
      where $\langle \mathfrak{S}_x, \mathfrak{S}^y\rangle=\delta_{x, y}$ with respect to the natural pairing.

      The standard maximal torus $\hat T_\mathbb{C}$ of $\mathcal{G}$
        has complex dimension $n+2$ with maximal compact sub-torus $\hat T= \mbox{Hom}_\mathbb{Z}(\hat{\mathfrak{h}}_\mathbb{Z}^*, \mathbb{S}^1)$.
       With respect to the natural action of   $\hat T_\mathbb{C}$  on $\mathcal{G}/\mathcal{B}$, we consider the equivariant cohomology $H^*_{\hat T}(\mathcal{G}/\mathcal{B})$,
       which is an $\hat S$-module with $\hat S=S[\hat{\mathfrak{h}}_\mathbb{Z}^*]=H^*_{\hat T}(\mbox{pt})$.
       Note that the 1-dimensional sub-torus $\mathbb{C}^*$, which comes from the central extension, acts on $\mathcal{G}/\mathcal{B}$ trivially.
        As a consequence, 
         the equivariant Schubert structure constants
         are polynomials in $\mathbb{Q}[\delta, \alpha_1, \cdots, \alpha_n]\subset \hat S$ only. 
        Since we are concerned with the non-trivial part of the $\hat{T}_{\mathbb{C}}$-action only, we denote
        $\hat S= \mathbb{Q}[\delta, \alpha_1, \cdots, \alpha_n]= \mathbb{Q}[\alpha_0, \alpha_1, \cdots, \alpha_n]$ by abusing of notations.
         $H^*_{\hat T}(\mathcal{G}/\mathcal{B})$ is an $\hat S$-module spanned by the basis of equivariant Schubert classes (see e.g. \cite{kumar} for concrete definitions), which we also denote as
         $\{\mathfrak{S}^{x}~|~ x\in W_{\scriptsize \mbox{af}}\}$ simply.    Via the embedding $\pi^*: H^*_{\hat T}(\mathcal{G}/\mathcal{P}_Y) \hookrightarrow  H^*_{\hat T}(\mathcal{G}/\mathcal{B})$ induced by
    the natural projection $\pi: \mathcal{G}/{\mathcal{B}} \rightarrow\mathcal{G}/{\mathcal{P}_Y}$,
     the equivariant cohomology   $H^*_{\hat T}(\mathcal{G}/\mathcal{P}_Y)$ is also an $\hat S$-module spanned by
       the  basis of equivariant Schubert classes $\{\mathfrak{S}^{x}~|~x\in W_{\scriptsize \mbox{af}}^Y\}$. As a consequence,
        equivariant Schubert structure constants  for $\mathcal{G}/\mathcal{P}_Y$ are covered by
        equivariant Schubert structure constants  for $\mathcal{G}/\mathcal{B}$.

 \begin{remark}\label{rema}
      For  $\mathfrak{S}^{x}, \mathfrak{S}^{y}\in H^*_{\hat T}(\mathcal{G}/\mathcal{B})$,
                {\upshape $\mathfrak{S}^{x} \mathfrak{S}^{y} =\sum_{z\in W_{\scriptsize\mbox{af}}}p_{x, y}^{\,z}\mathfrak{S}^{z} $}.
         The    equivariant Schubert structure constant  $p_{x, y}^{\,z}$ is a polynomial in $\hat S$. In terms of combination of rational functions,
                     one has {\upshape $p_{x, y}^{\,z}=\sum_{v\in W_{\scriptsize\mbox{af}}}d_{x, v}d_{y, v}c_{z, v}$}
                     (see e.g. chapter 11 of \cite{kumar}).
 \end{remark}

   Let  $L_{\scriptsize\mbox{an}}K=\{f\in \mathcal{G}~|~ f(\mathbb{S}^1)\subset K\}$ and
        $\Omega_{\scriptsize\mbox{an}}K=\{f\in L_{\scriptsize\mbox{an}}K~|~ f(1_{\mathbb{S}^1})= 1_K\}$.
     Note that each $f\in \mathcal{G}$ can be written as $f(t)=f_K(t)\cdot f_P(t)$ for some unique $f_K\in \Omega_{\scriptsize\mbox{an}}K$ and $f_P\in\mathcal{P}_0$.
     Therefore we can realize $\mathcal{G}/\mathcal{P}_0$ as $\Omega_{\scriptsize\mbox{an}}K$, which is homotopy-equivalent to $\Omega K$, via the ($L_{\scriptsize\mbox{an}}K$-equivariant) homeomorphism $ \mathcal{G}/\mathcal{P}_0\rightarrow \Omega_{\scriptsize\mbox{an}}K$ (see \cite{press} and references therein
      for more details). Since we are concerned with properties at the level of (co)homology only, we do not distinguish between $\Omega_{\scriptsize\mbox{an}}K$ and $\Omega K$.
      The Bruhat decomposition of $\mathcal{G}/\mathcal{P}_0$ readily gives a Bruhat decomposition of $\Omega K$. As
       a consequence and by abusing notations, we know that $H_*(\Omega K, \mathbb{Z})$ (resp. $H^*(\Omega K, \mathbb{Z})$) has an additive $\mathbb{Z}$-basis of
       Schubert (co)homology classes $\{\mathfrak{S}_x (\mbox{resp. } \mathfrak{S}^x)\,~|~ x\in  W_{\scriptsize\mbox{af}}^-\}$.

        The (non-trivial part of the) $\hat T$-action on $\mathcal{G}/\mathcal{P}_0$ corresponds to the natural $\mathbb{S}^1\times T$ action on $\Omega K$, which
         consists of the rotation action of $\mathbb{S}^1$ on $\Omega K$ and the action of $T$ on $\Omega K$ by pointwise conjugation.
          By considering the $T$-action only, we obtain the   evaluation maps $\mbox{ev}: H^*_{\hat T}(\mathcal{G}/\mathcal{P}_0)\rightarrow H^*_T(\mathcal{G}/\mathcal{P}_0)$ and
           $ {ev}: \hat S=H^*_{\hat T}(\mbox{pt})\rightarrow H^*_T(\mbox{pt})=S$,
           where the $T$-equivariant cohomology $H^*_T(\mathcal{G}/\mathcal{P}_0)$ is an $S$-module with $S=\mathbb{Q}[\alpha_1, \cdots, \alpha_n]$.
                The image of the null root $\delta=\alpha_0+\theta$
          in $S$ is 0.  More precisely, we have $H^*_{\hat T}(\mathcal{G}/\mathcal{P}_0)=\mbox{Span}_{\hat S}
                 \{\hat{\mathfrak{S}}^{x}~|~ x\in W^-_{\scriptsize\mbox{af}}\}$ and
                  $H^*_T(\mathcal{G}/\mathcal{P}_0)=\mbox{Span}_S\{\mathfrak{S}^{x}~|~ x\in W^-_{\scriptsize\mbox{af}}\}$. Let $f=f(\alpha_0, \alpha_1,\cdots, \alpha_n)\in \hat S$, then
                 we have $  {ev}(f)=f(-\theta, \alpha_1, \cdots, \alpha_n)\in S$ and $\mbox{ev}(f\hat{\mathfrak{S}}^{x})=ev(f)\mathfrak{S}^{x}$.

    \begin{remark}\label{rema22}
                {\upshape $\mathfrak{S}^{x} \mathfrak{S}^{y} =\sum_{z\in W_{\scriptsize\mbox{af}}^-}\tilde p_{x, y}^{\,z}\mathfrak{S}^{z} $}.
         The    $T$-equivariant Schubert structure constant    $\tilde p_{x, y}^{\,z}$   is a polynomial in $S$. It follows from Remark \ref{rema}
         and Lemma \ref{coe99} that
                  {\upshape $\tilde p_{x, y}^{\,z}=\sum_{v\in W_{\scriptsize\mbox{af}}^-}d_{x, [v]}d_{y, [v]}c_{z, [v]}$} as combination of rational functions.
    \end{remark}

   \begin{remark}\label{rema33}
              The    $T$-equivariant Schubert structure constant    $p_{u, v}^w$ for $ G/B $ can   also be expressed
              in terms of $c_{u, v}$ and $d_{u, v}$. The polynomial $p_{u, v}^w$ is given by
                  {\upshape $  p_{u, v}^{\,w}=\sum_{v_1\in W }d_{u, v_1}d_{v, v_1}c_{w, v_1}$} as combination of rational functions  (see e.g. \cite{kumar}).
    \end{remark}

  \begin{remark}
       Because of the natural $S$-module isomorphism $H_T^*(\Omega K\times \Omega K)\cong  H_T^*(\Omega K)\otimes_S H_T^*(\Omega K)$, the Pontryagin  product $\Omega K\times \Omega K\rightarrow \Omega K$,  which is
                   associative and  $T$-equivariant,
     induces a coassociative coproduct  $H_T^*(\Omega K)\rightarrow  H_T^*(\Omega K)\otimes H_T^*(\Omega K)$.
  In particular, this gives an alternative definition
of the $T$-equivariant homology of $\mathcal{G}/\mathcal{P}_0$, which doesn't use Borel-Moore
homology. Indeed, define $H^T_*(\mathcal{G}/\mathcal{P}_0)$ to be the submodule  of $\mbox{Hom}_S(H^*_T(\mathcal{G}/\mathcal{P}_0), S)$ spanned by
      those  $ \mathfrak{S}_{x}\in \mbox{Hom}_S(H^*_T(\mathcal{G}/\mathcal{P}_0), S)$
       which for any {\upshape $x, y\in W_{\scriptsize\mbox{af}}^-$} satisfy $\langle \mathfrak{S}_{x}, \mathfrak{S}^{y}\rangle= \delta_{x, y}$  with respect to the natural pairing.
     Then the product of $H^T_*(\mathcal{G}/\mathcal{P}_0)$,   induced from the coproduct of   $H_T^*(\mathcal{G}/\mathcal{P}_0)=H_T^*(\Omega K)$,
        makes $H^T_*(\mathcal{G}/\mathcal{P}_0)$ an $S$-algebra.
       Note that the elements $\mathfrak{S}_x$ coincide with
the integration operators $\mathcal{L}_w$ defined in \cite{ara} Prop. 2.5.1, so Arabia's localization formula can be applied.
        Thus the whole proof of the formula for structure coefficients $b_{x, y}^{\, z}$'s still goes through.
  \end{remark}

\section*{Acknowledgements}
 The first author is supported in part by a RGC research grant from the Hong
Kong Government. Both authors thank Alberto Arabia, Haibao Duan,
Sergey Fomin, Thomas Lam, Peter Magyar, Victor Reiner, John R.
Stembridge, Xiaowei Wang and Yongchang Zhu for useful discussions. We also thank the referee for valuable comments and suggestions.
The second author is particularly grateful to Thomas Lam for
considerable help.
\bibliographystyle{amsplain}

\begin{thebibliography}{99}


\bibitem{ara}A. Arabia,\,{\it Cohomologie $T$-\'equivariante de la varit\'et\'e de drapeaux d'un groupe de Ka\v{c}-Moody}, Bull.  Soc. Math. France 117 (1989), no. 2, 129-165.
\bibitem{borel}A. Borel,\,{\it Sur la cohomologie des espaces fibr\'es principaux et des espaces homog\'enes de groupes de Lie compacts},  Ann. of Math. (2) 57, (1953), 115--207.

\bibitem{bu0}A.S. Buch,\,{\it Quantum cohomology of Grassmannians}, Compo. Math. 137(2003), 227--235.

\bibitem{bu}A.S. Buch,\,{\it Quantum cohomology of partial flag manifolds}, Trans. Amer. Math. Soc. 357 (2005), no. 2, 443--458.



\bibitem{cmn} P.E. Chaput, L. Manivel, N. Perrin,\,{\it Quantum cohomology of minuscule homogeneous spaces}, Transform. Groups 13 (2008), no. 1, 47--89.
\bibitem{duan}H. Duan,\,{\it Multiplicative rule of Schubert class}, Invent. Math. 159 (2005), no. 2, 407--436.
\bibitem{fomin}S. Fomin,\,{\it Lecture notes on quantum cohomology of the flag manifold}, Geometric combinatorics (Kotor, 1998), Publ. Inst. Math. (Beograd) (N.S.) 66 (80) (1999), 91--100.
\bibitem{fominGP}S. Fomin, S. Gelfand, A. Postnikov,\,{\it Quantum Schubert polynomials}, J. Amer. Math. Soc. 10 (1997), no. 3, 565--596.
\bibitem{fu11}W. Fulton,\,{\it On the quantum cohomology of homogeneous varieties}, The legacy of Niels Henrik Abel, 729--736, Springer, Berlin, 2004.
\bibitem{fu22}W. Fulton,\,{\it Young tableaux: with applications to representation theory and geometry},  Cambridge University Press, Cambridge, 1997.

\bibitem{fupa}W. Fulton, R. Pandharipande,\,{\it Notes on stable maps and quantum cohomology},
                   Proc. Sympos. Pure Math. 62, Part 2, Amer. Math. Soc., Providence, RI, 1997.
\bibitem{fw} W. Fulton, C. Woodward,\,{\it On the quantum product of Schubert classes}, J. Algebraic Geometry  13 (2004), no. 4, 641-661.


\bibitem{grah}W. Graham,\,{\it Positivity in equivariant Schubert calculus}, Duke Math. J. 109, no. 3 (2001), 599-614.


\bibitem{hill}H. Hiller,\,{\it  The geometry of Coxeter groups},  Boston : Pitman Pub., c1982.
\bibitem{hum} J.E. Humphreys,\,{\it Introduction to Lie algebras and representation theory}, Graduate Texts in Mathematics 9, Springer-Verlag, New York-Berlin,   1980.
 \bibitem{hump} J.E. Humphreys,\,{\it Reflection groups and Coxeter groups}, Cambridge University Press, Cambridge, UK, 1990.
 \bibitem{kac} V.G. Kac,\,{\it Infinite-dimensional Lie algebras},Cambridge University Press, Cambridge, 1990.
\bibitem{kim} B. Kim,\,{\it Quantum cohomology of flag manifolds $G/B$ and quantum Toda lattices}, Ann. of Math. (2) 149 (1999), no. 1, 129--148.
\bibitem{kntao} A. Knutson, T. Tao,\,{\it The honeycomb model of ${\rm GL}\sb n(\Bbb C)$ tensor products. I. Proof of the saturation conjecture},  J. Amer. Math. Soc. 12 (1999), no. 4, 1055--1090.
\bibitem{kntao22} A. Knutson, T. Tao, C. Woodward,\,{\it The honeycomb model of ${\rm GL}\sb n(\Bbb C)$ tensor products. II. Puzzles determine facets of the Littlewood-Richardson cone}, J. Amer. Math. Soc. 17 (2004), no. 1, 19--48.
\bibitem{koku} B. Kostant, S. Kumar,\,{\it The nil Hecke ring and the cohomology of $G/P$ for a Kac-Moody group $G$}, Adv. in Math. 62 (1986), 187-237.
\bibitem{kt1}A. Kresch, H. Tamvakis,\,{\it Quantum cohomology of orthogonal Grassmannians}, Compos. Math. 140 (2004), no. 2, 482--500.
\bibitem{kt2}A. Kresch, H. Tamvakis,\,{\it  Quantum cohomology of the Lagrangian Grassmannian}, J. Algebraic Geometry 12 (2003), no. 4, 777--810.

\bibitem{kumar} S. Kumar,\,{\it Kac-Moody groups, their flag varieties and representation theory}, Progress in Mathematics 204, Birh\"auser Boston, Inc., Boston, MA, 2002.
\bibitem{lam} T. Lam,\,{\it Schubert polynomials for the affine Grassmannian}, J. Amer. Math. Soc. 21 (2008), no. 1, 259--281.
\bibitem{lamshi} T. Lam, M. Shimozono,\,{\it Quantum cohomology of $G/P$ and homology of affine Grassmannian}, Acta Math. 204 (2010), no. 1, 49--90. 

\bibitem{lamlmsh}T. Lam, L. Lapointe, J. Morse, M. Shimozono,\,{\it Affine insertion and Pieri rules for the affine Grassmannian},
                       Memoirs of the AMS, to appear; arxiv: math.CO/0609110.
\bibitem{lamschshi}T. Lam, A. Schilling, M. Shimozono,\,{\it Schubert polynomials for the affine Grassmannian of the symplectic group},
                        arxiv: math. AG/0710.2720.
\bibitem{czli}C. Li,\,{\it Quantum cohomology of homogeneous varieties}, Ph.D. theis, Chinese University of Hong Kong, 2009.

\bibitem{lus}G. Lusztig,\,{\it Singularities, character formulas, and a q-analog of weight multiplicities}, in ``Analysis and topology on singular spaces II-III", Ast\'erisque 101-102(1983), 208-229.
\bibitem{mag}P. Magyar,\,{\it Notes on Schubert classes of a loop group},  arXiv: math. RT/0705.3826.
\bibitem{mare}A.-L. Mare,\,{\it Polynomial representatives of Schubert classes in $QH^*(G/B)$}, Math. Res. Lett. 9 (2002), no. 5-6, 757--769.
\bibitem{mih} L.C. Mihalcea,\,{\it Equivariant quantum cohomology of homogeneous spaces},  Duke Math. J. 140 (2007), no. 2, 321--350.
\bibitem{mi2}L.C. Mihalcea,\,{\it Positivity in equivariant quantum Schubert calculus}, Amer. J. Math. 128 (2006), no. 3, 787--803.
\bibitem{peterson} D. Peterson,\,{\it Quantum cohomology of $G/P$}, Lecture notes at MIT, 1997 (notes by J. Lu and K. Rietsch).
\bibitem{press} A. Pressley, G. Segal,\,{\it Loop groups},  Clarendon Press, Oxford, 1986.


\bibitem{stem} J.R. Stembridge,\,{\it Tight quotients and double quotients in the Bruhat order}, Electron. J. Combin. 11 (2004/06), no. 2, Research Paper 14-41.
\bibitem{vafa} C. Vafa,\, {\it Topological mirrors and quantum rings}, in: Essays on mirror manifolds (ed. S.T. Yau), International Press 1992, 96-119.

\bibitem{wo}C.T. Woodward,\,{\it On D. Peterson's comparison formula for Gromov-Witten invariants of $G/P$},
                                                       Proc. Amer. Math. Soc. 133 (2005), no. 6,   1601--1609.

 \end{thebibliography}

\end{document}